\documentclass[10pt]{article}

\usepackage[a4paper,margin=1in]{geometry}
\usepackage{amsmath,amssymb,amsthm,mathtools}
\usepackage{bm}
\usepackage{enumitem}
\usepackage{microtype}
\usepackage[hidelinks]{hyperref}
\usepackage[nameinlink,capitalise,noabbrev]{cleveref}
\usepackage{authblk}
\usepackage{geometry}
\usepackage{enumitem} 
\usepackage{hyperref, cleveref}
\hypersetup{colorlinks=true,linkcolor=blue,citecolor=blue,urlcolor=blue}

\numberwithin{equation}{section}

\newtheorem{theorem}{Theorem}[section]
\newtheorem{proposition}[theorem]{Proposition}
\newtheorem{lemma}[theorem]{Lemma}
\newtheorem{corollary}[theorem]{Corollary}
\newtheorem{remark}[theorem]{Remark}

\newtheorem{hypothesis}{Hypothesis}

\newcommand{\R}{\mathbb R}
\newcommand{\C}{\mathbb C}
\newcommand{\HH}{\mathcal H}
\newcommand{\OO}{\mathcal O}
\newcommand{\YY}{\mathcal Y}
\newcommand{\cA}{\mathcal A}
\newcommand{\Dom}{\operatorname{Dom}}
\newcommand{\dist}{\operatorname{dist}}
\newcommand{\norm}[1]{\left\lVert #1\right\rVert}

\newenvironment{keywords}{\par\noindent\textbf{Keywords:}\ }{\par\vspace{1em}}

\theoremstyle{plain}
\newtheorem{problem}{Problem}
\newtheorem{definition}{Definition}[section]

\title{Inverse Heat Source Problems from Boundary Flux and Interior Observations on Sets of Low Hausdorff Dimension}

\author[1]{Ze Li\thanks{rikudosennin@163.com}}
\author[2]{Zhiyuan Li\thanks{lizhiyuan@nbu.edu.cn}}
\author[3]{Zhi Zhou\thanks{Corresponding author, zhizhou@polyu.edu.hk}}
\affil[1,2]{School of Mathematics and Statistics, Ningbo University, Ningbo 315211, China}
\affil[3]{ Department of Applied Mathematics, The Hong Kong Polytechnic University}
\date{ }

\begin{document}
\maketitle

\begin{abstract}
This paper investigates conditional stability for inverse source problems for the heat equation with a known temporal factor and an unknown spatial  component in a bounded $C^{1,1}$ domain. We focus on observations supported on sets of low Hausdorff dimension and establish conditional stability in this setting.
For boundary observations on compact sets of positive $q$-dimensional Hausdorff content, we establish logarithmic stability from full-time boundary flux observations and double-logarithmic stability from delayed-time boundary flux observations. The admissible dimensional ranges are $q>d-2$ when the observation set is contained in a flat boundary patch and $q>d-1-c_{d+1}$ on a general $C^{1,1}$ boundary, where $c_{d+1}>0$ depends only on the dimension. A key ingredient in deriving these results is a new boundary spectral inequality for the Dirichlet Laplacian, which controls a finite Dirichlet spectral sum through observations of the normal derivative of its elliptic extension on such a boundary set. Our results also cover inverse heat source problems with interior observations on sets of positive $q$-dimensional Hausdorff content for some $q>d-1$, yielding logarithmic stability from full-time observations for general sources in $H_0^1(\Omega)$ and H\"older stability from terminal-time observations for sources in a suitable spectral Gevrey class.
\end{abstract}

\begin{keywords}
heat equation; inverse source problem; Hausdorff content; boundary Hausdorff spectral inequality; conditional stability
\end{keywords}

\section{Introduction}\label{sec:intro}

Let  $T>0$ and let $\Omega\subset\mathbb{R}^d$ be a bounded domain with sufficiently smooth boundary $\partial\Omega$. We consider the initial-boundary value problem for the heat equation with a separated source
\begin{equation}\label{qforward}
\begin{cases}
\partial_t u - \Delta u = g(t)f(x), & (x,t)\in \Omega\times(0,T),\\
u(x,0)=0, & x\in\Omega,\\
u(x,t)=0, & (x,t)\in \partial\Omega\times (0,T),
\end{cases}
\end{equation}
where the function $u(x,t)$ describes the spatial and temporal distribution of some substance such as contaminants. The time-dependent factor $g(\cdot)$ is known as a $C^1[0,T]$ function with $g(0)\neq0$, while the spatial source distribution $f$ is unknown.
The forward problem \eqref{qforward} models heat conduction in a medium with an internal heat source whose temporal profile is understood but whose spatial localisation is inaccessible to direct measurement~\cite{alfanov1994inverse,woodbury2023inverse}.

Letting $\Gamma\subset\partial \Omega$ of
Hausdorff dimension greater than $d-2$, we are concerned with the following inverse source problems  (ISPs) by taking observations on the spatial subset $\Gamma$.

\begin{problem}[ISPs with Hausdorff boundary observations]\label{prob2main}
Given the observation set $\Gamma\subset\partial \Omega$ of
Hausdorff dimension greater than $d-2$ and the temporal factor $g(t)$, determine the unknown source term $f(x)$ in \eqref{qforward} from one of the following types of observation data:
\begin{enumerate}[label=ISP\arabic*.,itemsep=0pt,leftmargin=4em]
    \item Full-time data: $ \partial_\nu u(x,t)$ for $(x,t)\in\Gamma\times(0,T)$;
    \item Delayed-time data: $ \partial_\nu u(x,t)$ for $(x,t)\in\Gamma\times(\tau,T)$ with $\tau>0$.
\end{enumerate}
\end{problem}

Letting $\omega\subset\Omega$ be a measurable set with Hausdorff dimension $s\in(d-1,d)$ and positive $s$-dimensional Hausdorff measure, we are concerned with the following inverse source problems (ISPs) by taking observations on the spatial subset $\omega$.
\begin{problem}[ISPs with Hausdorff interior observations]\label{prob:main}
Given the observation set $\omega\subset\Omega$ of positive Hausdorff measure and the temporal factor $g(t)$, determine the unknown source term $f(x)$ in \eqref{qforward} from one of the following types of observation data:
\begin{enumerate}[label=ISP\arabic*.,start=3,itemsep=0pt,leftmargin=4em]
    \item Full-time data:  $ u(x,t)$ for $(x,t)\in\omega\times(0,T)$;
    \item Terminal-time data: $u(x,t)$ for $(x,t)\in\omega\times\{T\}$.
\end{enumerate}
\end{problem}

These inverse problems arise naturally in applications such as the detection of nuclear leaks or the identification of heat-generating devices through walls; see, e.g., \cite{alfanov1994inverse,woodbury2023inverse}.
However, inverse problems are often severely ill-posed: without additional \textsl{a priori} assumptions on the unknwon parameter, arbitrarily small perturbations in the measurements may lead to large reconstruction errors  \cite{isakov2006inverse}. Conditional stability can nevertheless be recovered by restricting the unknown source to a suitable a priori bounded admissible set. More precisely, such estimates control the reconstruction error by a modulus of continuity of the measurement error, often in a weaker topology. Beyond their theoretical significance, conditional stability estimates provide an essential tool for analyzing convergence rates of regularization methods \cite{ChengYamamoto:2000, WangChenChengJiang:2025, JinKeretaXia:2026} and deriving error bounds for finite-dimensional numerical reconstructions \cite{Burman:2023, Burman:2025, JinZhou:2021, JinLuQuanZhou:2026,Cen2026,CenZhou:2024}.

For the parabolic inverse source problems, stability estimates with observations taken on subdomains or open sub-boundaries have been extensively investigated, where most existing works rely heavily on Carleman estimates.
Imanuvilov and Yamamoto~\cite{imanuvilov1998lipschitz} proved global Lipschitz stability for general source terms $f\in H_0^1(\Omega)$ by Carleman estimate with regular weight function, where measurements are imposed on open subboundary. Yamamoto and Zou \cite{yamamoto2001simultaneous} further derived stability results for parabolic inverse problems via singular Carleman weight functions. For a systematic overview of Carleman estimates for parabolic inverse problems, we refer readers to the comprehensive survey \cite{yamamoto2009carleman} and the classic monograph \cite{klibanov2004carleman}.
Recently, Huang et al.~\cite{huang2020stability} refined the classical argument and obtained local H\"older from partial boundary measurements; their cut-off-free approach greatly simplifies the proof structure for stability analysis.
In addition to Carleman-based arguments, stability analysis for parabolic inverse source problems with partial data can also be carried out by Fourier series methods. Yamamoto~\cite{yamamoto1993conditional} constructed conditional logarithmic stability through solution decomposition and rigorous Fourier coefficient estimates, which establishes a non-Carleman framework for ill-posed inverse problems.

When only fixed-time data are available, existing stability results typically require observations over the whole domain $\Omega$; see, for example, \cite[Chapter 9]{isakov2006inverse}. Cheng and Liu \cite{cheng2020inverse} showed that local observations on a subdomain at a single fixed time are in general insufficient, and established uniqueness and conditional stability using local measurements at two distinct times. For the more general source term $R(x,t)f(x)$, stability results have also been obtained using Carleman estimates with $x$-independent weight functions \cite{yamamoto2009carleman}, as well as logarithmic convexity and related methods \cite{carasso1999logarithmic,chen2022simultaneous,levine1970logarithmic,phung2018carleman}. In this more general setting, however, a global fixed-time observation is typically combined with additional lateral boundary data collected over a time interval of positive length.


All aforementioned theoretical stability results assume that the observation
region contains an open set in $\Omega$ or $\partial\Omega$.
This assumption is essential for the applicability of Carleman estimates:
the construction of the weight function $\phi = e^{\lambda\psi}$ requires a function $d(x)$ satisfying $|\nabla d|>0$ on a neighbourhood of the observation region~\cite[Lemma~4.1]{yamamoto2009carleman}, and the repeated integration-by-parts in the derivation of the Carleman inequality~\cite[Section~3]{yamamoto2009carleman} demands sufficient regularity of the observation set to control the boundary integrals.
Both requirements break down when $\omega$ is merely of positive Hausdorff measure; in particular, $\omega$ may be totally disconnected with empty interior.
The same obstruction affects the spectral method based on Fourier decomposition and unique continuation~\cite{cheng2020inverse,isakov2006inverse}, which also relies on open-set observations.

Recently, the works \cite{BurqMoyano2023, green2024observability,LogunovMalinnikova2018} established observability inequalities for the heat equation from interior sets of positive Hausdorff measure. These interior-type spectral inequalities are adopted in our analysis for interior observation settings (Theorems \ref{thmgene} and \ref{ganal}). For boundary observations, however, no analogous spectral inequality
appears to be available when the observation set is merely of positive Hausdorff content and may have zero surface measure and empty relative interior. The principal novelty of this paper is the establishment of a \emph{boundary Hausdorff spectral inequality (BHSI)} for the Dirichlet Laplacian on $C^{1,1}$ domains. This inequality controls finite Dirichlet spectral sums through normal-derivative observations on thin boundary sets and is derived using quantitative propagation-of-smallness results from
\cite{LogunovMalinnikova2018}. To the best of our knowledge, it is the first boundary spectral inequality of this type for observation sets of positive Hausdorff content. As applications of the newly established BHSI, we derive conditional stability estimates for the full-time and delayed-time boundary inverse source problems in Problem~\ref{prob2main}. In parallel, using the interior spectral inequality from \cite{green2024observability}, we establish conditional stability for the interior observation in Problem~\ref{prob:main}.

Our main results are summarized as follows.  Let $A$ denote the Dirichlet realization in $L^2(\Omega)$ of the elliptic operator appearing in the equation. In the stability estimates below, $\varepsilon$ denotes the norm of the observation specified in the
corresponding item.

\begin{enumerate}[label=ISP\arabic*.,itemsep=0pt,leftmargin=4em]
    \item \emph{Full-time boundary observation
    (Theorem~\ref{insourth}):}
For $f\in\operatorname{Dom}(A^\sigma)$ with $\sigma>0$, under an appropriate trace condition, we prove a logarithmic stability estimate of the form
    $$
    \|f\|_{L^2(\Omega)}
    \leq C|\log\varepsilon|^{-\sigma},
    $$
    using full-time measurements of the time derivative of the boundary
    flux $\partial_t\partial_\nu u$ on $\Gamma\times(0,T)$.

    \item \emph{Delayed-time boundary observation
    (Theorem~\ref{deHaus}):}
    For $f\in\operatorname{Dom}(A^\sigma)$ with $\sigma>d/4$, we prove a double-logarithmic stability estimate of the form
    $$
    \|f\|_{L^2(\Omega)}
    \leq C\bigl|\log|\log\varepsilon|\bigr|^{-\sigma},
    $$
    using delayed-time measurements of the time derivative of the
    boundary flux $\partial_t\partial_\nu u$ on
    $\Gamma\times(\tau,T)$.

    \item \emph{General Sobolev source and full-time interior observation
    (Theorem~\ref{thmgene}):}
    For $f\in H_0^1(\Omega)$, we prove a logarithmic stability estimate of the form
    $$
    \|f\|_{L^2(\Omega)}
    \leq C|\log\varepsilon|^{-1/2},
    $$
    using full-time measurements of the time derivative of the solution
    $\partial_tu$ on $\omega\times(0,T)$.

    \item \emph{Gevrey source and terminal-time interior observation
    (Theorem~\ref{ganal}):}
For $f\in G^{1,\rho}(\Omega)$ with $\rho$ sufficiently large, we prove a H\"older stability estimate of the form
    $$
    \|f\|_{L^2(\Omega)}
    \leq C\varepsilon^\alpha
    $$
    for some $\alpha>0$, using only the terminal observation
    $u(\cdot,T)$ on $\omega$.
\end{enumerate}

\subsection{Notations and function spaces}
Let $\Omega\subset\mathbb{R}^d$ be a bounded smooth domain and $T>0$.
We denote the space-time cylinder by $\Omega_T = \Omega\times(0,T)$.
The standard $L^2(\Omega)$ inner product and norm are $\langle\cdot,\cdot\rangle$ and $\|\cdot\|_{L^2(\Omega)}$ respectively.
The Sobolev space $H_0^1(\Omega)$ is equipped with the norm $\|u\|_{H_0^1(\Omega)} = \|\nabla u\|_{L^2(\Omega)}$, and $H^2(\Omega)$ denotes the usual second-order Sobolev space, see details in Adams \cite{adams1975sobolev} for example.

Let $A:H_0^1(\Omega)\cap H^2(\Omega) \to L^2(\Omega)$ be the positive Dirichlet Laplacian on $\Omega$.
It admits a sequence of eigenvalues $\{\lambda_k\}_{k=1}^\infty$ with
$0<\lambda_1\le\lambda_2\le\cdots\to\infty$,
and a corresponding orthonormal basis $\{\phi_k\}_{k=1}^\infty\subset H_0^1(\Omega)$ of eigenfunctions:
$$
A\phi_k = \lambda_k\phi_k \text{ in } \Omega, \quad \phi_k|_{\partial\Omega}=0,
$$
see e.g., Evans \cite{evans2010partial}.
For $f\in L^2(\Omega)$, let $f_k = \langle f,\phi_k\rangle$ denote its Fourier coefficients.

We recall the definitions of Hausdorff content and Hausdorff dimension.
Let $E\subset\mathbb R^d$ and $s\geq 0$. The $s$-dimensional Hausdorff
content of $E$ is defined by
$$
\mathcal{C}^s_{\mathcal{H}} (E)
:=
\inf\left\{
\sum_{j=1}^{\infty} r_j^s
:
E\subset \bigcup_{j=1}^{\infty}B(x_j,r_j)
\right\}.
$$
The Hausdorff dimension of $E$ is defined by
$$
\dim_{\mathcal H}(E)
:=
\inf\{s\ge0:{\mathcal C}^s_{\mathcal{H}}(E)=0\}.
$$

\subsection{ Main results on boundary observation}

In this section, we state our main stability results for boundary observation, which rely on the following geometric condition on $\Gamma$.

\begin{hypothesis}[Geometric setting]\label{hyg}
Let $d\ge 2$. We assume that $\Gamma\subset\partial\Omega$ is compact satisfying either
$$
\mathcal C_{\mathcal H}^q(\Gamma)>0,
\qquad q>d-2,
$$
when $\Gamma$ lies in a flat boundary patch, or
$$
\mathcal C_{\mathcal H}^q(\Gamma)>0,
\qquad q>d-1-c_{d+1},
$$
on a general $C^{1,1}$ boundary, where $c_{d+1}\in (0,1)$ depends only on $d$.
\end{hypothesis}

Fix $S_0>0$.  For $\Lambda\ge1$ and a finite family
$a=(a_j)_{\lambda_j\le\Lambda}\subset\C$, define $\mathcal A_{\Lambda}(a)$  by
\begin{equation}
\cA_\Lambda(a)
:=
\left(\sum_{\lambda_j\le\Lambda}|a_j|^2\right)^{1/2}.
\label{A}
\end{equation}
For a compact set $\Gamma\subset\partial\Omega$, set
\begin{equation}
\OO_{\Gamma,S_0}(\Lambda,a)
:=
\sup_{\substack{x\in\Gamma\\ |s|\le S_0}}
\left|\partial_\nu U_{\Lambda,a}(x,s)\right|,
\label{qO-Lambda}
\end{equation}
where $U_{\Lambda,a}$ is defined by
\begin{equation*}
U_{\Lambda,a}(x,s)
:=
\sum_{\lambda_j\le\Lambda}
 a_j\phi_j(x)\cosh(\sqrt{\lambda_j}\,s).
\end{equation*}
For the finite spectral sums considered here the normal derivative has a continuous representative on every $C^{1,1}$ boundary patch.  One way to see this is to bootstrap the eigenvalue equation in $W^{2,p}$ locally up to the boundary, with $p>d$; see also the boundary regularity estimates used in \cite{ChenLiuLiu2026}.

Our first main result is a boundary Hausdorff spectral inequality (BHSI),  which serves as a key ingredient in establishing the conditional stability  estimates for the inverse source problems with boundary observations.
\begin{theorem} [BHSI]
\label{cbhsi}
Let $\Omega\subset\R^d$ be a  $C^{1,1}$ bounded domain and $\Gamma\subset\partial\Omega$  satisfy {\bf Hypothesis}  \ref{hyg}.  Then, for every $S_0>0$, there is $C\ge1$ such that
\begin{equation*}
\cA_\Lambda(a)
\le
C e^{C\sqrt\Lambda}\OO_{\Gamma,S_0}(\Lambda,a)
\end{equation*}
for all $\Lambda\ge1$ and all finite coefficient families $(a_j)_{\lambda_j\le\Lambda}\subset\C$.
\end{theorem}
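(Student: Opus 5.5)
The plan is to follow the Lebeau--Robbiano / Jerison--Lebeau augmented-harmonic-function strategy. The elliptic quantitative unique continuation step will be replaced by the Logunov--Malinnikova propagation of smallness from sets of positive Hausdorff content. Work in the cylinder $X=\Omega\times(-S,S)\subset\R^{d+1}$ with some $S>S_0$. The function $U=U_{\Lambda,a}$ is harmonic there, $\Delta_{x}U+\partial_s^2U=0$, and vanishes on $\partial\Omega\times(-S,S)$.

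\textbf{Step 1 (upper bound).} Using orthonormality of $\phi_j$ and $\cosh(\sqrt{\lambda_j}s)\le e^{\sqrt\Lambda S}$, together with elliptic regularity up to the $C^{1,1}$ boundary, we get
\[
\|U\|_{W^{1,\infty}(\Omega\times(-S,S))}\le Ce^{C\sqrt\Lambda}\cA_\Lambda(a).
\]

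\textbf{Step 2 (lower bound).} Since $\|U(\cdot,s)\|_{L^2}^2=\sum|a_j|^2\cosh^2(\sqrt{\lambda_j}s)\ge\cA_\Lambda(a)^2$, the function $U$ is not small on $\Omega\times(-S_0/2,S_0/2)$. Hence $\sup|U|\gtrsim\cA_\Lambda(a)$ there, and the doubling index of $U$ on the cylinder is $\lesssim\sqrt\Lambda$.

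\textbf{Step 3 (reflection).} Next we pass to a problem where the boundary data become interior data. Near a point of $\Gamma$, flatten $\partial\Omega$ by a $C^{1,1}$ change of variables and extend $U$ by odd reflection across $\partial\Omega\times\R$. In the flat-patch case the reflection is exact: $\tilde U$ is harmonic in a full neighbourhood $B\subset\R^{d+1}$. Moreover $\partial_\nu U$ on $\Gamma$ equals a derivative $\partial_{y_d}\tilde U$ on $E=\Gamma\times[-S_0,S_0]$, which lies in the hyperplane $\{y_d=0\}$. The derivative $w=\partial_{y_d}\tilde U$ is harmonic, and $\tilde U$ vanishes on $\{y_d=0\}$. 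Since $\tilde U(y)=\int_0^{y_d}w$, smallness of $w$ controls $\tilde U$. The set $E$ satisfies $\mathcal C_{\mathcal H}^{q+1}(E)>0$ with $q+1>d-1=(d+1)-2$. This is exactly the dimension threshold in Logunov--Malinnikova's propagation of smallness for harmonic functions in $\R^{d+1}$: if $|w|\le\varepsilon$ on $E$ and $|w|\le M$ on $2B$, then $\sup_B|w|\le CM^{1-\alpha}\varepsilon^{\alpha}$.

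On a general $C^{1,1}$ boundary the reflected $\tilde U$ solves a divergence-form elliptic equation with Lipschitz coefficients. The solution $\tilde U$ vanishes on a $C^{1,1}$ hypersurface, so it belongs to their setting for solutions of such equations. Their propagation of smallness for gradients from sets of dimension $>(d+1)-1-c_{d+1}$ then applies to $|\nabla\tilde U|$, which equals $|\partial_\nu U|$ on $E$ because the tangential gradient vanishes. This is the origin of the constant $c_{d+1}$ in Hypothesis~\ref{hyg}.

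\textbf{Step 4 (propagation).} Propagate the resulting local bound from $B$ through the connected set $\Omega\times(-S_0/2,S_0/2)$ by a standard chain of balls, using three-ball inequalities. Near $\partial\Omega$ we use the reflected equation, or boundary three-ball inequalities for solutions vanishing on the lateral boundary. This yields
\[
\cA_\Lambda(a)\le Ce^{C\sqrt\Lambda}\,\cA_\Lambda(a)^{1-\beta}\,\OO_{\Gamma,S_0}(\Lambda,a)^{\beta}.
\]
Dividing by $\cA_\Lambda(a)^{1-\beta}$ and raising to the power $1/\beta$ gives the theorem.

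The main obstacle is Step 3 in the general $C^{1,1}$ case. There one must check that the flattening and reflection preserve the hypotheses (Lipschitz, uniformly elliptic coefficients) required by the gradient version of propagation of smallness. One must also check that the $e^{C\sqrt\Lambda}$ dependence enters only through $M$, with exponents $\alpha,\beta$ independent of $\Lambda$. I would first try an even reflection of the conormal derivative in flattened coordinates, verifying that the coefficient matrix remains Lipschitz across the interface.
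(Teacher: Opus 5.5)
Your architecture matches the paper's. Both use the augmented function $U_{\Lambda,a}$, the upper bound $Ce^{C\sqrt\Lambda}\cA_\Lambda(a)$, odd reflection across the flattened boundary, lifting $\Gamma$ to $E=\Gamma\times J$ via the product estimate for Hausdorff content, a propagation-of-smallness theorem from $E$, and absorption of $\cA_\Lambda(a)^{1-\beta}$. In the $C^{1,1}$ case you use the same tool as the paper: Logunov--Malinnikova for gradients, with threshold $(d+1)-1-c_{d+1}$.

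\textbf{The flat case rests on a false statement.} There is no propagation of smallness for harmonic functions in $\R^{n}$ from sets of dimension slightly above $n-2$. For example, $w(y)=y_d$ is harmonic in $\R^{d+1}$ and vanishes on all of $\{y_d=0\}\supset E$. Logunov--Malinnikova's theorem for solutions themselves needs $\mathcal C^{n-1+\eta}_{\HH}(E)>0$, and no subset of a hyperplane satisfies that.

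The estimate you want is still true here, but only because of structure you mention and then do not use. Since $\widetilde U$ is odd in $y_d$, all tangential derivatives on the hyperplane vanish, including $\partial_s$. Hence $|\nabla_{x,s}\widetilde U|=|w|=|\partial_\nu U|$ on $E$. You must apply a \emph{gradient} theorem to $\widetilde U$ itself. The right one is Malinnikova's propagation theorem for gradients of harmonic functions from subsets of a hyperplane with $\mathcal C^{n-2+\eta}_{\HH}>0$; this is Proposition~\ref{grad}(i) in the paper. It gives smallness of $\sup_B|\nabla\widetilde U|$, and only then should you integrate $\widetilde U=\int_0^{y_d}\partial_{y_d}\widetilde U$.

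\textbf{The $C^{1,1}$ reflection.} The obstacle you flag is real, but reflecting the conormal derivative evenly does not address it. That is automatic once $\widetilde U$ is reflected oddly with coefficients $JAJ$ below the interface, where $J=\mathrm{diag}(1,\dots,1,-1)$.

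The actual obstruction is this. With the naive flattening $y_d=x_d-\psi(x')$, the entries $a_{id}$ ($i<d$) do not vanish on $\{y_d=0\}$. Odd reflection then makes them jump, so the coefficients are only piecewise Lipschitz and Logunov--Malinnikova does not apply. You need a $C^{1,1}$ flattening with $a_{id}(y',0)=0$; this is what the paper imports from Chen--Liu--Liu.

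Two smaller points:
\begin{itemize}
\item Localize first. By subadditivity, one chart contains a compact $\Gamma_0\subset\Gamma$ with $\mathcal C^q_{\HH}(\Gamma_0)>0$. With a short $J_*$, the set $\Gamma_0\times J_*$ fits inside the $B_{1/2}$ of a ball where the reflected solution lives; $\Gamma\times[-S_0,S_0]$ need not.
\item Reduce to real $a_j$ first, since the propagation theorems concern real solutions.
\end{itemize}

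\textbf{Your Step 4 is a valid alternative to the paper's last step.} The paper does not propagate through the cylinder. It takes a ball $\omega_*\Subset\Omega$ inside the propagation ball, next to the boundary piece. It bounds $\norm{U_{\Lambda,a}(\cdot,0)}_{L^2(\omega_*)}$ by $\sup_K|\nabla\widetilde V|$ via integration in the normal direction. It then cites the interior Lebeau--Robbiano inequality \eqref{intespec}.

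Your route combines a chain of interior and boundary three-ball inequalities with $\norm{U_{\Lambda,a}(\cdot,0)}_{L^2(\Omega)}=\cA_\Lambda(a)$, which essentially reproves that inequality. It is more self-contained. However, it needs the reflected Lipschitz equation, and hence boundary three-ball inequalities, along all of $\partial\Omega$ rather than only near $\Gamma_0$. It also relies on a chain length independent of $\Lambda$, which holds because the geometry is fixed.
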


Now we consider the case that our boundary observation is taken on the full time $(0,T)$.
Let $g\in W^{1,\infty}(0,T)$ with
\begin{equation}
g(0)\ne0,
\label{g0nonze}
\end{equation}
and consider
\begin{equation}
\begin{cases}
\partial_tu+Au=g(t)f,&0<t<T,\\
u(0)=0.
\end{cases}
\label{2source}
\end{equation}
The Dirichlet boundary condition is incorporated in $A$.

The second theorem gives conditional stability for the inverse source problem with observations on $\Gamma \times (0,T)$.
The natural boundary quantity is the time derivative of the measured flux.
For complete functional clarity, define
\begin{equation}
X_\Gamma:=C(\Gamma;\mathbb C)
\label{XGama}
\end{equation}
and let
\begin{equation}
\mathfrak D_{\Gamma,T}
:=
\left\{
f\in L^2(\Omega):
 t\mapsto\partial_\nu e^{-tA}f|_\Gamma
 \in L^2(0,T;X_\Gamma)
\right\}.
\label{ASC}
\end{equation}

\begin{theorem}\label{insourth}
Let $\Omega\subset\R^d$ be a  $C^{1,1}$ bounded domain and $\Gamma\subset\partial\Omega$  satisfy {\bf Hypothesis}  \ref{hyg}. Let $T>0$, $\sigma>0$, and $M>0$. Fix $g\in W^{1,\infty}(0,T)$ with \eqref{g0nonze}.  Assume
\begin{equation*}
f\in\Dom(A^\sigma)\cap\mathfrak D_{\Gamma,T},
\qquad
\norm{A^\sigma f}_2\le M.
\end{equation*}
Let $u$ solve \eqref{2source}, and define
\begin{equation*}
\delta_{\mathrm{src}}
:=
\norm{\partial_t\partial_\nu u}_{L^2(0,T;C(\Gamma))}.
\end{equation*}
Then
\begin{equation}
\norm{f}_{L^2(\Omega)}
\le
CM
\left[
\log\left(e+\frac{M}{\delta_{\mathrm{src}}}\right)
\right]^{-\sigma}.
\label{stabi}
\end{equation}
The constant depends only on the fixed geometry, the quantitative constants
in BHSI of \Cref{cbhsi}, the time $T$, the exponent $\sigma$, and $g$.
\end{theorem}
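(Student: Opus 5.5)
We will proceed by the standard route: Duhamel reduction, a Laplace-type transform that converts the time data into the elliptic extension $U_{\Lambda,a}$, then BHSI applied to the low frequencies.

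\emph{Reduction to a free evolution.} Write $u(t)=\int_0^t g(t-s)e^{-sA}f\,ds$. Differentiating gives
\[
\partial_t u(t)=g(0)e^{-tA}f+\int_0^t g'(t-s)e^{-sA}f\,ds .
\]
Set $w(t):=\partial_\nu e^{-tA}f|_\Gamma$; this lies in $L^2(0,T;X_\Gamma)$ because $f\in\mathfrak D_{\Gamma,T}$. Taking normal traces on $\Gamma$ yields the Volterra equation of the second kind
\[
\partial_t\partial_\nu u(t)=g(0)w(t)+(g'*w)(t).
\]
Since $g(0)\neq0$ and $g'\in L^\infty$, the resolvent kernel is bounded, and a Gronwall argument in $L^2(0,T;C(\Gamma))$ gives $\|w\|_{L^2(0,T;C(\Gamma))}\le C\delta_{\mathrm{src}}$. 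We may therefore assume $g\equiv1$ and work with the data $w(x,t)=\sum_j f_j e^{-\lambda_j t}\partial_\nu\phi_j(x)$ for $x\in\Gamma$, $t\in(0,T)$.

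\emph{Splitting into frequency bands.} For $\Lambda\ge1$ write $f=P_\Lambda f+(I-P_\Lambda)f$. The high part is handled by
\[
\|(I-P_\Lambda)f\|_{L^2(\Omega)}\le\Lambda^{-\sigma}M .
\]
For the low part we would apply \Cref{cbhsi} to coefficients $a_j$ built from $f_j$, for instance $a_j=f_je^{-\lambda_j t_0}$ for a suitable $t_0\in(0,T)$. The resulting factor $e^{\lambda_j t_0}\le e^{\Lambda t_0}$ costs an extra $e^{C\Lambda}$, and the final estimate can absorb this. We then need to bound
\[
\sup_{x\in\Gamma,\,|s|\le S_0}\bigl|\partial_\nu U_{\Lambda,a}(x,s)\bigr|
\]
by the data. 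This transfer is the main obstacle. We would use the subordination identity
\[
\cosh(\sqrt\lambda\,s)\,e^{-\lambda t_0}=\int_{\R}K_{t_0}(s,r)\,e^{-\lambda r}\,dr ,
\]
which is the analytic continuation in $s$ of the heat kernel, with an explicit Gaussian kernel. This represents $\partial_\nu U_{\Lambda,a}(\cdot,s)$ through the heat evolution of $P_\Lambda f$. Two difficulties remain. First, the representation involves all times $r$, not only $r\in(0,T)$. Second, it involves $P_\Lambda f$ rather than $f$.

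\emph{Handling the two difficulties.} For the low-frequency truncation, we instead pass to a time-analytic argument on the finite sum. The function $t\mapsto\partial_\nu e^{-tP_\Lambda A}f|_\Gamma$ is entire in $t$ with growth $e^{\Lambda|t|}$ times the total size. An analytic-continuation (two-constants / Hadamard three-lines) estimate then gives smallness for real $t\in[-S_0^2,T]$ from smallness on $(0,T)$, bounded by $e^{C\Lambda}\delta^{\theta}M^{1-\theta}$. The discrepancy between the data of $f$ and of $P_\Lambda f$ is the high-frequency trace $\partial_\nu e^{-tA}(I-P_\Lambda)f$. We would control it using the smoothing effect for $t\ge t_0/2$ together with the trace condition $f\in\mathfrak D_{\Gamma,T}$. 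This produces a term of size $e^{-c\Lambda t_0}$ times a quantity bounded by $M$, up to the constant $C$ in the final statement. Combining these pieces gives
\[
\|f\|_{L^2(\Omega)}\le C\bigl(e^{C\Lambda}\delta_{\mathrm{src}}+\Lambda^{-\sigma}M\bigr),
\]
with any small-power corrections absorbed. We then choose $\Lambda\sim c\log(e+M/\delta_{\mathrm{src}})$, which yields \eqref{stabi}.

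We expect the main obstacle to be rigorously linking the heat data on $(0,T)$ to the hyperbolic-cosine extension $\partial_\nu U_{\Lambda,a}$ on $[-S_0,S_0]$. Our first attempt would be the Gaussian subordination formula combined with analyticity of finite spectral sums.
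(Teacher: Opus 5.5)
Your Volterra reduction is fine (it is the paper's Lemma~\ref{lem:volterra}). The gap is the step you flag as the main obstacle: it is genuinely missing, and the identity you propose for it does not exist.

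No kernel with $\int_\R|K(r)|e^{-\lambda r}\,dr<\infty$ can satisfy $\int_\R K(r)e^{-\lambda r}\,dr=\cosh(s\sqrt\lambda)\,e^{-\lambda t_0}$ for $\lambda$ in an interval when $s\neq0$. The left side extends holomorphically to a vertical strip and is bounded on each vertical line. The right side is unbounded there, since $|\cosh(s\sqrt{c+i\eta})|\to\infty$ as $|\eta|\to\infty$. The Gaussian identity you are recalling is $\frac{1}{\sqrt{4\pi t_0}}\int_\R e^{-(r-is)^2/(4t_0)}\cos(r\sqrt\lambda)\,dr=\cosh(s\sqrt\lambda)e^{-\lambda t_0}$. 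It uses the wave propagator $\cos(r\sqrt\lambda)$, not heat data. Smallness of the truncated trace on the real interval $[-S_0^2,T]$ does not control the $\cosh$-sum either.

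The missing idea is the Taylor expansion in time. With $F_x(t)=\partial_\nu(e^{-tA}f)(x)$ one has $\partial_\nu\bigl(\cosh(s\sqrt A)e^{-t_*A}f\bigr)(x)=\sum_{k\ge0}\frac{(-1)^ks^{2k}}{(2k)!}F_x^{(k)}(t_*)$. Propagation of smallness from $(t_1,t_2)$ to a \emph{complex} disc around $t_*$ (Lemma~\ref{anpro}) gives Cauchy bounds $|F_x^{(k)}(t_*)|\le Ck!R^{-k}M^{1-\vartheta}\delta^\vartheta$. The series then converges for every $s$ because of the ratio $k!/(2k)!$.

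Even granting the transfer, your single-cutoff bookkeeping does not close. The high-frequency contamination of the data on $[t_0/2,T]$ is of size $e^{-c\Lambda t_0}M$ with $c<1$. Three things then act on it:
\begin{itemize}
\item the two-constants step raises it to a power $\theta<1$;
\item that step also multiplies it by the continuation factor $e^{C\Lambda}$, since your truncated sum grows like $e^{\Lambda S_0^2}$ at negative times;
\item undoing $a_j=f_je^{-\lambda_jt_0}$ multiplies it by a further $e^{\Lambda t_0}$.
\end{itemize}
The resulting term is of order $e^{(C+t_0-c\theta t_0)\Lambda}M$, which grows with $\Lambda$. It has silently disappeared from your final display $\|f\|_2\le C(e^{C\Lambda}\delta_{\mathrm{src}}+\Lambda^{-\sigma}M)$.

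More fundamentally, suppose one and the same $\Lambda$ both truncates the $\cosh$-extension of $e^{-t_0A}f$ for BHSI and undoes the weight $e^{-\lambda_jt_0}$. Then the available bound on the truncation error is of order $e^{-t_0\Lambda}M$ up to subexponential factors. After multiplying by $e^{t_0\Lambda}$ and the BHSI factor $e^{C\sqrt\Lambda}$, it does not decay.

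The paper separates these two roles.
\begin{itemize}
\item Proposition~\ref{potiin} propagates smallness for the \emph{untruncated} flux $z\mapsto\partial_\nu e^{-zA}f$. This is holomorphic near $[t_1,t_2]\subset(0,\infty)$ with a $\Lambda$-independent bound $C\|f\|_2$ (Lemma~\ref{boho}).
\item It truncates only afterwards, so the only $\Lambda$-dependent cost is the sublinear BHSI factor $e^{C\sqrt\Lambda}$, which the tail $e^{-c\Lambda}$ beats. Optimizing gives $\|e^{-t_*A}f\|_2\le CM^{1-\beta}\delta_{\mathrm{src}}^\beta$, and no weight is undone at this stage.
\item Lemma~\ref{lemlog} then undoes the weight with an independent cutoff, via $\|f\|_2^2\le e^{2t_*\Lambda}\|e^{-t_*A}f\|_2^2+M^2\Lambda^{-2\sigma}$. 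Here the low-frequency term carries no contamination.
\end{itemize}
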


The following theorem gives conditional stability for the inverse source problem with observations on $\Gamma \times (\tau,T)$ with $0<\tau <T$.
The extra assumption needed in the delayed-time problem concerns the time
factor $g$.  For $\theta\in(0,\pi/2)$and $R>T$, set
$$
\mathcal S_{\theta,R}
:=
\left\{
z\in\mathbb C:
0<|z|<R,\ |\arg z|<\theta
\right\}.
$$
We assume that $g$ extends holomorphically to
$\mathcal S_{\theta,R}$, continuously to $0$, and that
\begin{equation}\label{Sg}
g(0)\neq0,
\qquad
\sup_{z\in\mathcal S_{\theta,R}}
\bigl(|g(z)|+|g'(z)|\bigr)
\le G
\end{equation}
for some constant $G>0$.

\begin{theorem}
\label{deHaus}
Let $\Omega\subset \mathbb R^d$ be a $C^{1,1}$ bounded domain and $\Gamma\subset \partial \Omega$ fulfill the {\bf Hypothesis}  \ref{hyg}. Let $g$ satisfy  \eqref{Sg}.   Assume that
$$
\sigma>\frac d4,
\qquad
f\in\operatorname{Dom}(A^\sigma),
\qquad
\|A^\sigma f\|_{L^2(\Omega)}\le M.
$$
Let $u$ solve \eqref{2source}, and define
\begin{equation*}
\delta_\tau
:=
\left(
\int_\tau^T
\sup_{x\in\Gamma}
\left|
\partial_t\partial_\nu u(x,t)
\right|^2\,dt
\right)^{1/2}.
\end{equation*}
Then there exist constants $C,D>0$, depending only on
$$
\Omega,\Gamma,d,\sigma,\tau,T,\theta,R,G,
$$
and on the constant in BHSI of \Cref{cbhsi}, such that
\begin{equation}
\|f\|_{L^2(\Omega)}
\le
CM
\left[
\log\left(
e+
\log\left(
e+\frac{DM}{\delta_\tau}
\right)
\right)
\right]^{-\sigma}.
\label{dedoub}
\end{equation}
The right-hand side is interpreted as $0$ when $\delta_\tau=0$.
Particularly, the delayed boundary data determine $f$ uniquely. Moreover, as $\delta_\tau/M\downarrow0$,
$$
\|f\|_{L^2(\Omega)}
\le
CM
\left[
\log\log\left(\frac{DM}{\delta_\tau}\right)
\right]^{-\sigma}.
$$
\end{theorem}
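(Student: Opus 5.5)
We prove \Cref{deHaus} by reducing the delayed data to full-time data through analytic continuation in time, and then running the argument of \Cref{insourth}, with the BHSI of \Cref{cbhsi} supplying the low-frequency control.

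\textbf{Step 1 (Duhamel and time derivative).} Write $u(t)=\int_0^t g(t-s)e^{-sA}f\,ds$, so that
$$\partial_t u(t)=g(0)e^{-tA}f+\int_0^t g'(t-s)e^{-sA}f\,ds.$$
Since $g$ extends holomorphically to $\mathcal S_{\theta,R}$ with the bound \eqref{Sg}, and $e^{-zA}$ is an analytic semigroup on a sector, the map $t\mapsto \partial_t\partial_\nu u(\cdot,t)|_\Gamma\in C(\Gamma)$ extends holomorphically to a sector $\mathcal S_{\theta',R'}$ with $\theta'<\theta$ and $T<R'<R$. The hypothesis $\sigma>d/4$ is what makes $\partial_\nu e^{-zA}f|_\Gamma$ bounded in $C(\Gamma)$. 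Indeed, $\sup_\Gamma|\partial_\nu\phi_k|\lesssim\lambda_k^{(d+1)/4+\epsilon}$ by Sobolev embedding and elliptic regularity. Combined with the factor $e^{-\mathrm{Re}(z)\lambda_k}$ for $\mathrm{Re}\,z$ bounded below, and with the weight $\lambda_k^{-\sigma}$, this gives summability. Near $z=0$ the Duhamel integral still has to be controlled, and there we use $\|A^\sigma f\|\le M$ together with Weyl's law $\lambda_k\sim k^{2/d}$, which is where $\sigma>d/4$ enters. The outcome is a bound $\sup_{z}\|\partial_t\partial_\nu u(z)\|_{C(\Gamma)}\le C M$ on the sector, with a possibly integrable singularity at $0$.

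\textbf{Step 2 (analytic continuation, the main obstacle).} From the smallness $\delta_\tau$ on $(\tau,T)$ and the bound $CM$ on the sector, we apply a two-constants theorem (harmonic measure) to $\log\|\partial_t\partial_\nu u(z)\|_{C(\Gamma)}$. This yields, for $t\in(\eta,\tau)$,
$$\|\partial_t\partial_\nu u(t)\|_{C(\Gamma)}\le CM\,(\delta_\tau/M)^{\omega(t)},$$
where $\omega(t)$ is the harmonic measure of $(\tau,T)$ seen from $t$. As $t\to0$, $\omega(t)$ degenerates (like $t^{\pi/(2\theta')}$ or so) because $0$ is a boundary point of the sector. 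We balance this by cutting at $\eta$: on $(0,\eta)$ we use only the a priori bound, giving a contribution $\lesssim M\eta^{1/2}$. Optimizing over $\eta$, the full-time quantity $\delta_{\mathrm{src}}$ on $(0,T)$ is bounded by roughly
$$CM\,\bigl[\log(DM/\delta_\tau)\bigr]^{-\kappa}$$
for some $\kappa>0$. Handling this endpoint degeneracy, and making sure the resulting modulus is only logarithmic rather than something worse, is the delicate part.

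\textbf{Step 3 (conclusion).} We check that $f\in\mathfrak D_{\Gamma,T}$; this is automatic from $\sigma>d/4$ as in Step 1. Applying \Cref{insourth} with $\delta_{\mathrm{src}}\lesssim M[\log(e+DM/\delta_\tau)]^{-\kappa}$ gives
$$\|f\|\le CM\Bigl[\log\Bigl(e+c\bigl[\log(e+DM/\delta_\tau)\bigr]^{\kappa}\Bigr)\Bigr]^{-\sigma},$$
and the power $\kappa$ is absorbed into the constants, since $\log(x^\kappa)=\kappa\log x$. If $\delta_\tau=0$, the identity theorem makes $\partial_t\partial_\nu u$ vanish on $(0,T)$. \Cref{insourth}, whose proof rests on the BHSI, then gives $f=0$, which is the uniqueness claim. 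The final asymptotic form follows immediately from \eqref{dedoub}.
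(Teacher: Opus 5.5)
\textbf{Where the proposal matches the paper.} Steps 2 and 3 follow the paper's route. Step 2 is \Cref{fullsector}: the paper encodes the power-law decay of harmonic measure at the vertex through the chain of disks $D(\rho^nt_0,\kappa\rho^nt_0)$, which gives $\|G(t)\|\le C\varepsilon^{c_1t^a}$. It then integrates $t^{-2\beta}e^{-2c_1Lt^a}$ directly instead of cutting at $\eta$. Invoking \Cref{insourth} in Step 3 is exactly the Volterra inversion, \Cref{potiin} and \Cref{lemlog} that the paper reruns. This use is legitimate once $f\in\mathfrak D_{\Gamma,T}$ is known, because \eqref{Sg} gives $g\in W^{1,\infty}(0,T)$.

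\textbf{The gap is Step 1.} It is the only place where $\sigma>d/4$ enters, and both later steps rest on it. What is needed is the quantitative bound $\|\partial_\nu e^{-zA}f\|_{C(\Gamma)}\le CM|z|^{-\beta}$ on a subsector, with $\beta<\tfrac12$.
\begin{itemize}
\item You need square integrability, not mere integrability, because the data norm is $L^2_t$.
\item A bound ``$\le CM$'' is the wrong target: for $\operatorname{Re}z$ bounded below it holds for every $f\in L^2(\Omega)$, with no $\sigma$ at all.
\end{itemize}
Your mechanism sums individual bounds on $\sup_\Gamma|\partial_\nu\phi_k|$ by the triangle inequality and Weyl's law, and this discards orthogonality. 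Even granting $\sup_\Gamma|\partial_\nu\phi_k|\lesssim\lambda_k^{(d+1)/4}$, Cauchy--Schwarz and Weyl give only
$\|\partial_\nu e^{-tA}f\|_{C(\Gamma)}\lesssim M\bigl(\sum_k\lambda_k^{(d+1)/2-2\sigma}e^{-2t\lambda_k}\bigr)^{1/2}\sim Mt^{-(d/2+1/4-\sigma)}$.
This is square integrable only if $\sigma>\tfrac d2-\tfrac14$, which is strictly above $\tfrac d4$ for $d\ge2$. So for $\sigma\in(\tfrac d4,\tfrac d2-\tfrac14]$ your argument establishes neither of two things it needs:
\begin{itemize}
\item $f\in\mathfrak D_{\Gamma,T}$, which is required to apply \Cref{insourth};
\item smallness of the $(0,\eta)$ piece in Step 2, which is $M\eta^{1/2-\beta}$, not $M\eta^{1/2}$.
\end{itemize}

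\textbf{How the paper closes it.} \Cref{nortra} reaches the sharp threshold by working in $L^p$. The smoothing bound $\|Ae^{-tA}\|_{L^2\to L^p}\lesssim t^{-1-\frac d2(\frac12-\frac1p)}$, global $W^{2,p}$ regularity on the $C^{1,1}$ domain, and $W^{2,p}\hookrightarrow C^{1,\alpha}$ with $p\downarrow d$ give $\|\partial_\nu e^{-tA}\|_{L^2\to C(\partial\Omega)}\lesssim t^{-r_p}$, with $r_p\to\tfrac{d+2}4$. The representation $A^{-\sigma}=\Gamma(\sigma)^{-1}\int_0^\infty s^{\sigma-1}e^{-sA}\,ds$ then yields $t^{-\beta}$ for any $\beta>r_p-\sigma$. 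Such a $\beta<\tfrac12$ exists exactly when $\sigma>\tfrac d4$.

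Complex times are reached by writing $e^{-zA}=e^{-s_zA}e^{-(z-s_z)A}$ with $s_z=\tfrac12\cos\theta_0|z|$. The Duhamel term is continued as $z\int_0^1g'(z(1-r))Q_f(rz)\,dr$ (\Cref{flux}), which converges because $|z|^{-\beta}$ is integrable.

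Alternatively, you can keep your eigenfunction expansion but apply Cauchy--Schwarz against the pointwise spectral function, $\sup_{x\in\Gamma}\sum_{\lambda_k\le\Lambda}|\partial_\nu\phi_k(x)|^2\lesssim\Lambda^{(d+2)/2+\epsilon}$. This keeps orthogonality and gives the same $t^{-((d+2)/4-\sigma)}$ rate.

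\textbf{Two smaller repairs in Step 2.}
\begin{itemize}
\item The two-constants theorem needs $\log\|F\|$ bounded above, so apply it to $z^\beta F/K$, as the paper does.
\item The $L^2(\tau,T)$ smallness must first be upgraded to sup-smallness on a disk around $\tfrac{\tau+T}2$; \Cref{anpro} provides this.
\end{itemize}
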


\begin{remark}[Why the condition $\sigma>d/4$ appears]
The restriction
$
\sigma>\frac d4
$
is not a Hausdorff-dimension restriction.  It is imposed solely by the use
of the observation space
$
L^2(0,T;X_\Gamma))
$
after analytic continuation all the way to $t=0$.  The boundary smoothing
estimate gives
$$
\|\partial_\nu e^{-tA}f\|_{C(\Gamma)}
\lesssim
t^{-\beta}\|A^\sigma f\|_2
$$
with some $\beta<1/2$ precisely when
$\sigma>d/4$.  The inequality $\beta<1/2$ is what guarantees square
integrability at $t=0$.

If one replaces the norm $L^2_t$ by $L^p_t$ with a smaller
$p$, this regularity threshold can be lowered correspondingly.  Such a
variant is independent of the BHSI argument and is not pursued here.
\end{remark}

\begin{remark}[Origin of the second logarithm]

The two logarithms in \eqref{dedoub} have different
origins.  The first one is generated by quantitative analytic continuation
of the delayed boundary measurement from $(\tau,T)$ toward $t=0$.
This is the mechanism introduced in the delayed boundary inverse-source
argument of Choulli--Yamamoto \cite{ChoulliYamamoto2004}.  The second
logarithm comes from the intrinsic backward instability of the heat
semigroup, through the low--high spectral splitting of
Lemma~\ref{lemlog}.  The Hausdorff geometry enters through
BHSI and hence through
Proposition~\ref{potiin}. It changes the admissible
boundary sets and the constants, but not the final exponent $\sigma$ of
the outer logarithm.
\end{remark}

\subsection{Main results on interior observation}
We introduce the spectral analytic class associated with the Dirichlet Laplacian on $\Omega$.

\begin{definition}[Spectral Gevrey class $G^{1,\rho}$]\label{gevrey}
We define spectral  Gevrey class $G^{1,\rho}(\Omega)$ with $\rho>0$ by
$$
G^{1,\rho}(\Omega)
:=
\left\{
f\in L^2(\Omega):
\sum_{k=1}^\infty
e^{2\rho\sqrt{\lambda_k}}
|f_k|^2<\infty
\right\},\quad
\text{with}~~
f_k=\langle f,\phi_k\rangle.
$$
It is equipped with the norm
$$
\|f\|_{G^{1,\rho}(\Omega)}
:=
\left(
\sum_{k=1}^\infty
e^{2\rho\sqrt{\lambda_k}}
|f_k|^2
\right)^{1/2}.$$
\end{definition}

The following spectral inequality from recent observability results for the heat equation in Green et al. \cite{green2024observability} is fundamental for studying analytic sources. It connects the $L^\infty$-norm of a band-limited function to its supremum on a set of positive Hausdorff measure.
\begin{proposition}[\cite{green2024observability}] \label{specpro}
Let $\gamma>0$ and $\Omega\subset \mathbb R^d$. If the set $\omega\subset\Omega$ is such that $\mathcal{C}^{d-1+\gamma}_{\mathcal{H}}(\omega)>0$, then there  exists a constant $C_{S}>0$, depending only on $\Omega,\gamma$ and $\omega$, such that for every $\Lambda>0$ and every $v\in L^2(\Omega)$,
$$
\|\Pi_\Lambda v\|_{L^2(\Omega)} \le C_{S} e^{C_{S}\sqrt{\Lambda}} \sup_{x\in\omega} |\Pi_\Lambda v(x)|,
$$
where $\Pi_\Lambda$ denotes the orthogonal projection onto $\operatorname{span}\{\phi_k : \lambda_k\le\Lambda\}$.
\end{proposition}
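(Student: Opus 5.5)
The plan is to follow the Lebeau--Robbiano lifting strategy. The thin set will be handled by the quantitative propagation of smallness of Logunov--Malinnikova~\cite{LogunovMalinnikova2018} for harmonic functions, applied in one extra dimension.

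\textbf{Step 1: harmonic extension.} Write $\Pi_\Lambda v=\sum_{\lambda_k\le\Lambda}a_k\phi_k$ and set
$$U(x,s)=\sum_{\lambda_k\le\Lambda}a_k\phi_k(x)\cosh(\sqrt{\lambda_k}\,s),\qquad (x,s)\in\Omega\times(-S,S).$$
Since $-\Delta\phi_k=\lambda_k\phi_k$, we get $(\partial_s^2+\Delta_x)U=0$, so $U$ is harmonic in $\R^{d+1}$ restricted to the cylinder and vanishes on $\partial\Omega\times(-S,S)$. Moreover $U(\cdot,0)=\Pi_\Lambda v$ on $\omega$.

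Two elementary bounds follow from orthogonality and $\cosh\ge1$:
$$\|U\|_{H^1(\Omega\times(-S,S))}\le Ce^{C\sqrt\Lambda}\|\Pi_\Lambda v\|_{L^2(\Omega)},\qquad \|\Pi_\Lambda v\|_{L^2(\Omega)}^2\le C\|U\|^2_{L^2(\Omega\times(-S/2,S/2))}.$$
Hence it suffices to bound $\|U\|_{L^2(\Omega\times(-S/2,S/2))}$ by $e^{C\sqrt\Lambda}\sup_\omega|U(\cdot,0)|$ up to a factor that can be absorbed.

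\textbf{Step 2: localize the thin set and propagate smallness.} By countable subadditivity of the content, some compact piece $K\subset\omega\cap\overline{B}$ has $\mathcal C^{d-1+\gamma}_{\mathcal H}(K)>0$, where $B$ is a small ball with $4B\Subset\Omega$. The set $E=K\times\{0\}\subset\R^{d+1}$ has positive content of dimension $(d+1)-2+\gamma$, which is exactly the codimension-less-than-two range of~\cite{LogunovMalinnikova2018}. Their theorem then yields, for harmonic $U$ on $4B\times(-r,r)$,
$$\sup_{B\times(-r/2,r/2)}|U|\le C\Big(\sup_E|U|\Big)^{\alpha}\Big(\sup_{4B\times(-r,r)}|U|\Big)^{1-\alpha},$$
with $\alpha\in(0,1)$ and $C$ depending only on $\gamma$, $K$ and the geometry, and not on $\Lambda$. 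By the mean value property, the last supremum is controlled by $\|U\|_{L^2(\Omega\times(-S,S))}$, and hence by $e^{C\sqrt\Lambda}\|\Pi_\Lambda v\|$.

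\textbf{Step 3: propagate to the whole slab.} Use the standard Lebeau--Robbiano interpolation for harmonic functions with Dirichlet data on $\partial\Omega\times(-S,S)$. It follows from a boundary Carleman estimate (or three-ball chains plus a boundary Carleman estimate near $\partial\Omega$) and reads
$$\|U\|_{H^1(\Omega\times(-S/2,S/2))}\le C\|U\|_{L^2(B\times(-r/2,r/2))}^{\kappa}\|U\|_{H^1(\Omega\times(-S,S))}^{1-\kappa}.$$
Chaining this with Steps 1 and 2 gives
$$\|\Pi_\Lambda v\|\le Ce^{C\sqrt\Lambda}\Big(\sup_\omega|\Pi_\Lambda v|\Big)^{\alpha\kappa}\|\Pi_\Lambda v\|^{1-\alpha\kappa}.$$
Dividing through and raising to the power $1/(\alpha\kappa)$ yields the claim, after enlarging $C_S$.

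\textbf{Main obstacle.} The delicate point is Step 2: it requires $\Lambda$-independent propagation of smallness from a set of positive content. I would invoke~\cite{LogunovMalinnikova2018} as a black box in dimension $d+1$. The remaining care goes into the reduction to a compact piece of $\omega$ kept away from $\partial\Omega$, which is what makes the constants depend on $\omega$ only through $K$ and $\gamma$.
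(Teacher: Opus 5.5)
Your Step 2 has a genuine gap. The Logunov--Malinnikova estimate for the \emph{values} of a solution in $\R^n$ requires $E$ to have positive $\mathcal C^{n-1+\delta}_{\mathcal H}$ content, i.e.\ codimension less than one. With $n=d+1$ this means dimension greater than $d$, whereas $E=K\times\{0\}$ only has dimension $d-1+\gamma<d$ when $\gamma<1$. The ``codimension less than two'' regime in their work concerns \emph{gradients}. Even there, the general theorem only reaches $\mathcal C^{n-1-c_n+\delta}$, which would force $\gamma>1-c_{d+1}$.

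Worse, $E$ lies in the hyperplane $\{s=0\}$, and a value estimate from subsets of a hyperplane is false for general harmonic functions: $U=s$ vanishes on $E$ but not identically. Your $U$ is even in $s$, which excludes this example, but nothing in the cited theorem uses evenness, so the inequality you write in Step 2 is unsupported. Switching to the gradient of your even $U$ does not help either. On $\{s=0\}$ that gradient equals $(\nabla_x\Pi_\Lambda v,0)$, which is not observed.

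The fix is to use the odd lift
$W(x,s)=\sum_{\lambda_k\le\Lambda}a_k\phi_k(x)\sinh(\sqrt{\lambda_k}\,s)/\sqrt{\lambda_k}$, which is just $W=\int_0^s U$. The paper only cites Green et al.\ for this proposition, and this is the mechanism behind their result; the paper's flat-patch proof of \Cref{thm:flat-bhsi} runs the same way.

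The function $W$ is harmonic, and $W(\cdot,0)=0$ kills the tangential gradient on $\{s=0\}$. Since $\partial_sW(\cdot,0)=\Pi_\Lambda v$, you get $|\nabla_{x,s}W|=|\Pi_\Lambda v|$ on $E$. Now apply Malinnikova's gradient propagation of smallness from subsets of a hyperplane with positive $\mathcal C^{n-2+\eta}_{\mathcal H}$ content. This is \Cref{grad}(i), i.e.\ Lemma~2.3 of Green et al.; use it in ambient dimension $n=d+1$ with $\eta=\gamma$. It gives
$\sup_{B\times(-r/2,r/2)}|\nabla W|\le C\bigl(\sup_\omega|\Pi_\Lambda v|\bigr)^{\theta}\bigl(e^{C\sqrt\Lambda}\|\Pi_\Lambda v\|_{L^2(\Omega)}\bigr)^{1-\theta}$.
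In particular $\|\Pi_\Lambda v\|_{L^2(B)}=\|\partial_sW(\cdot,0)\|_{L^2(B)}$ is controlled. The classical Lebeau--Robbiano spectral inequality from the open ball $B$ (or your Step 3 applied to $W$) then concludes after absorption.

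The rest of your outline is fine: the reduction to a compact piece of $\omega$ away from $\partial\Omega$, the $e^{C\sqrt\Lambda}$ upper bounds via interior elliptic estimates, and the final division.
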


We study two inverse source problems for the heat equation
\begin{equation}\label{mainpde}
\begin{cases}
\partial_t u + A u = g(t)f(x), & (x,t)\in\Omega_T,\\
u(x,0)=0, & x\in\Omega,\\
u(x,t)=0, & (x,t)\in \partial\Omega\times(0,T),
\end{cases}
\end{equation}
where $g\in C^1[0,T]$ with $g(0)\neq0$ is known, and $f$ is the unknown spatial source.

\medskip\noindent
\textbf{Case 1: General source.}
The observation consists of $u$ on the whole time interval and on the Hausdorff-thick set $\omega$:
$$
\text{Data: } u(x,t), \quad (x,t)\in\omega\times(0,T).
$$

\begin{theorem}[Full-time observation, general $f$]\label{thmgene}
Let $\gamma>0$ and $\Omega\subset \mathbb R^d$ be a given domain.  Let $\omega\subset \Omega$ with $\mathcal{C}^{d-1+\gamma}_{\mathcal{H}}(\omega)>0$. Assume that  $f\in H_0^1(\Omega)$ with $\|f\|_{H_0^1(\Omega)}\le M$ for some $M>0$, and  $g\in C^1[0,T]$ with $g(0)\neq0$.
Then there exists a constant $C,D>0$ depending on $\Omega,\omega,\gamma,T,g,M$ such that
$$
\|f\|_{L^2(\Omega)} \le C \left[\log \frac{D}{\displaystyle\int_0^T \sup_{x\in\omega} \bigl|\partial_t u(x,t)\bigr|\,dt}\right]^{-\frac12}.
$$
\end{theorem}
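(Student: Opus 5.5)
The strategy is to reduce the time-derivative data to an initial-value problem, invert a Volterra equation in time to reach the spectral coefficients of $f$, and then combine the interior spectral inequality of Proposition~\ref{specpro} with a low--high frequency splitting.

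\textbf{Reduction.} Differentiating \eqref{mainpde} in time, $w:=\partial_t u$ solves $\partial_t w + Aw = g'(t)f$ with $w(0)=g(0)f$. By Duhamel's formula,
$$
w(t)=g(0)e^{-tA}f+\int_0^t g'(t-s)e^{-sA}f\,ds .
$$
Write $v(t):=e^{-tA}f$. This is a Volterra equation of the second kind, $w = g(0)v + g'*v$, and since $g(0)\neq0$ it can be inverted:
$$
v = g(0)^{-1}w + \rho * w ,
$$
where $\rho\in C[0,T]$ is the resolvent kernel. Because the convolution acts only in time, the identity holds pointwise in $x$, and in particular at every $x\in\omega$. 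It also commutes with $\Pi_\Lambda$. Hence, with $\varepsilon:=\int_0^T\sup_{\omega}|\partial_t u|\,dt$,
$$
\int_0^T\sup_{x\in\omega}|v(x,t)|\,dt \le C_g\,\varepsilon .
$$
Pointwise evaluation on $\omega$ makes sense because $u$ is continuous in $x$ for $t>0$ by parabolic smoothing. I will take $f\in H_0^1$ and interpret the data accordingly.

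\textbf{Low frequencies.} Fix $\Lambda$ and apply Proposition~\ref{specpro} to $\Pi_\Lambda v(t)=e^{-tA}\Pi_\Lambda f$ for each $t$:
$$
e^{-\Lambda t}\|\Pi_\Lambda f\|_2\le\|\Pi_\Lambda v(t)\|_2\le C_Se^{C_S\sqrt\Lambda}\sup_\omega|\Pi_\Lambda v(t)| .
$$
The difficulty is that the data control $\sup_\omega|v(t)|$, not $\sup_\omega|\Pi_\Lambda v(t)|$. I handle this by splitting $\Pi_\Lambda v = v-(I-\Pi_\Lambda)v$. The tail is bounded in sup norm on $\Omega$ by Sobolev embedding and smoothing:
$$
\|(I-\Pi_\Lambda)e^{-tA}f\|_{L^\infty}\le C\|A^{m}(I-\Pi_\Lambda)e^{-tA}f\|_2 \qquad (m>d/4),
$$
which for $t\ge t_0$ gives $\lesssim t_0^{-m}e^{-\Lambda t_0/2}M$.

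\textbf{Integration and optimisation.} Integrate in $t$ over $(t_0,T)$ with $t_0$ small, say $t_0=T/2$ or a tuned value. This yields
$$
\|\Pi_\Lambda f\|_2\lesssim e^{C\sqrt\Lambda+\Lambda T}\bigl(\varepsilon+e^{-c\Lambda}M\bigr) .
$$
The growth $e^{\Lambda T}$ here is poor, so I use a cleaner route. Work directly at small times $t\in(0,t_1)$ with $t_1\sim\Lambda^{-1/2}$. Then $e^{-\Lambda t}\ge e^{-\sqrt\Lambda}$, and the tail $(I-\Pi_\Lambda)e^{-tA}f$ is handled in $L^2$ on $\Omega$ but in sup norm on $\omega$ via the smoothing factor $t^{-m}$. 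In practice the simplest scheme is to choose parameters so that all tail errors are of the form $e^{C\sqrt\Lambda}\times(\text{something small})$. The high-frequency part of $f$ is controlled by the a priori bound:
$$
\|(I-\Pi_\Lambda)f\|_2\le\Lambda^{-1/2}\|f\|_{H_0^1}\le\Lambda^{-1/2}M .
$$
Altogether,
$$
\|f\|_2\le Ce^{C\Lambda}\varepsilon+M\Lambda^{-1/2}+(\text{tail terms}).
$$
Choosing $\Lambda\sim c|\log\varepsilon|$ balances the terms and gives $\|f\|_2\lesssim|\log\varepsilon|^{-1/2}$. Even an exponential cost $e^{C\Lambda}$ (rather than $e^{C\sqrt\Lambda}$) suffices, provided it is dominated by a power of $1/\varepsilon$.

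\textbf{Main obstacle.} The key difficulty is the pointwise tail on $\omega$. Proposition~\ref{specpro} requires the band-limited function, but the data give the full solution, and $\sup_\omega$ of the tail is not controlled by $L^2$ norms. I would first try to absorb it by bounding $\|(I-\Pi_\Lambda)e^{-tA}f\|_{L^\infty}$ via $\|A^m e^{-tA}\|$ and restricting to $t\ge t_0$. This costs $e^{\Lambda t_0}$ on the low part and gives $e^{-\Lambda t_0}$ decay on the tail. Fortunately these two factors exactly compensate, up to polynomial factors, when both frequency cut-offs are placed at $\Lambda$. If they do not, I would use two thresholds, $\Lambda$ for projection and $K\Lambda$ for the tail, so that the tail decay $e^{-K\Lambda t_0}$ beats the cost $e^{\Lambda t_0}$.
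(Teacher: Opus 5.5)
Your reduction step (Volterra inversion pointwise in $x$, giving $\int_0^T\sup_\omega|v|\,dt\le C_g\varepsilon$) is the same as the paper's. After that the paper does not use the spectral inequality directly. It quotes the final-state observability inequality of Green et al., $\|v(\cdot,T)\|_{L^2}\le Ce^{C/T}\int_0^T\sup_\omega|v|\,dt$, and then the standard logarithmic backward-heat estimate for $H_0^1$ data.

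You instead try to reach $f$ with one frequency cut from Proposition~\ref{specpro}, and your main line has a real gap. With a single threshold $\Lambda$, inverting the low part at time $t$ costs exactly $e^{\Lambda t}$. The tail $(I-\Pi_\Lambda)e^{-tA}f$ decays at best like $\Lambda^m e^{-\Lambda t}M$. Multiplying through gives
\[
\|\Pi_\Lambda f\|_2\le C e^{C_S\sqrt\Lambda}\bigl(e^{\Lambda t}\sup_\omega|v(t)|+\Lambda^m M\bigr),
\]
whose second term is larger than $M$. So the ``exact compensation'' you call fortunate makes the estimate vacuous. Your intermediate bound $e^{C\sqrt\Lambda+\Lambda T}(\varepsilon+e^{-c\Lambda}M)$ with $c=t_0/2<T$ has the same defect. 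Moving to small times $t\sim\Lambda^{-1/2}$ does not help either, because the cancellation $e^{\Lambda t}\cdot e^{-\Lambda t}$ occurs at every $t$. Consequently the unspecified ``tail terms'' in your final assembly are not small, and the argument as primarily written does not close.

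Your two-threshold fallback is what actually works, and it is enough on its own. Apply Proposition~\ref{specpro} to $\Pi_{2\Lambda}v(t)$ and use $e^{-\Lambda t}\|\Pi_\Lambda f\|_2\le\|\Pi_\Lambda v(t)\|_2\le\|\Pi_{2\Lambda}v(t)\|_2$. For $\Lambda t\gtrsim m$ you have $\sup_\omega|(I-\Pi_{2\Lambda})v(t)|\lesssim\Lambda^m e^{-2\Lambda t}M$. This must be a genuine pointwise bound, since $\omega$ may be Lebesgue-null; it follows from interior continuity of $(I-\Pi_{2\Lambda})v(t)$. Averaging over $t\in[T/2,T]$ then gives
\[
\|\Pi_\Lambda f\|_2\le Ce^{C\sqrt\Lambda}\bigl(e^{\Lambda T}\varepsilon+\Lambda^m e^{-\Lambda T/2}M\bigr).
\]
Together with $\|(I-\Pi_\Lambda)f\|_2\le\Lambda^{-1/2}M$ and the choice $\Lambda=\tfrac{1}{4T}\log(M/\varepsilon)$, this yields $\|f\|_2\le CM[\log(M/\varepsilon)]^{-1/2}$. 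The $e^{\Lambda T}$ growth you worried about is harmless, because the target rate is only logarithmic.

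Compared with the paper, this route is more self-contained. It needs only Proposition~\ref{specpro} and $L^2\to L^\infty$ smoothing, whereas the paper imports Green et al.'s observability theorem, whose proof handles exactly this pointwise tail by a Lebeau--Robbiano-type iteration. The price is that you fuse observability and backward uniqueness into one step, so you lose the intermediate Lipschitz observability estimate for $v(\cdot,T)$ with cost $e^{C/T}$, which the paper's factorisation keeps.
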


\medskip\noindent
\textbf{Case 2: Analytic source.}
If the source is analytic, we can substantially relax the observation, with only the terminal state measured on $\omega$:
$$
\text{Data: } u(x,T), \quad x\in\omega.
$$

\begin{theorem}[Terminal observation, analytic $f$]\label{ganal}
Let $\gamma>0$ and $\Omega\subset \mathbb R^d$ be a given domain.  Let $\omega\subset \Omega$ with $\mathcal{C}^{d-1+\gamma}_{\mathcal{H}}(\omega)>0$. Assume that $f\in G^{1,\rho}(\Omega)$ with $\rho>C_{S}$ ($C_{S}$ being the constant from Proposition~\ref{specpro} ) and assume $\|f\|_{G^{1,\rho}}\le M$.
Suppose that $g\in L^1(0,T)$ and there exists $c_g>0$ such that
\begin{equation}\label{3cog}
\left|\int_0^T g(s)e^{-\lambda_k(T-s)}ds\right| \ge \frac{c_g}{\lambda_k} ,\quad\forall k\in\mathbb N.
\end{equation}
Then there exist constants $C>0 $  depending on $C_S, \rho, c_g, \|g\|_{L^1(0,T)}$ and $\alpha \in (0,1)$ depending on $C_S,\rho$, such that
$$
\|f\|_{L^2(\Omega)} \le CM^{1-\alpha}\left(\sup_{x\in\omega}|u(x,T)|\right)^{\alpha}.
$$
\end{theorem}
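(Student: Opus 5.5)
We prove Theorem~\ref{ganal} by expanding $u(\cdot,T)$ in eigenfunctions and splitting into low and high frequencies at a threshold $\Lambda$ to be optimized. Writing
\[
u(x,T)=\sum_k \mu_k f_k\phi_k(x),\qquad \mu_k:=\int_0^T g(s)e^{-\lambda_k(T-s)}\,ds ,
\]
we have $|\mu_k|\le \|g\|_{L^1(0,T)}$ for all $k$, and $|\mu_k|\ge c_g/\lambda_k$ by \eqref{3cog}.

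\textbf{Step 1 (high frequencies).} By the Gevrey bound,
\[
\|(I-\Pi_\Lambda)f\|_{L^2}\le e^{-\rho\sqrt\Lambda}M .
\]

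\textbf{Step 2 (low frequencies).} Put $w:=\Pi_\Lambda\bigl(\sum_k\mu_kf_k\phi_k\bigr)=\Pi_\Lambda u(\cdot,T)$. By \eqref{3cog},
\[
\|\Pi_\Lambda f\|_{L^2}\le c_g^{-1}\Lambda\,\|w\|_{L^2}.
\]
Proposition~\ref{specpro} applied to $v=u(\cdot,T)$ gives
\[
\|w\|_{L^2}\le C_Se^{C_S\sqrt\Lambda}\sup_{\omega}|\Pi_\Lambda u(\cdot,T)|.
\]
The data, however, is $\sup_\omega|u(\cdot,T)|$ and not the supremum of its projection. We therefore write $\Pi_\Lambda u(T)=u(T)-(I-\Pi_\Lambda)u(T)$ and bound the tail in $L^\infty(\Omega)$. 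Local Weyl/Sobolev bounds give $\|\phi_k\|_\infty\lesssim\lambda_k^{(d-1)/4}$, or more crudely $\lambda_k^{d/4}$ via $\mathrm{Dom}(A^{s})\hookrightarrow L^\infty$ for $s>d/4$. Hence
\[
\sup_\Omega|(I-\Pi_\Lambda)u(T)|\le \|g\|_{L^1}\sum_{\lambda_k>\Lambda}\lambda_k^{d/4}|f_k|\le C\|g\|_{L^1}\,e^{-\rho'\sqrt\Lambda}M
\]
for any $\rho'<\rho$. Here the polynomial factors and the Weyl count $\#\{\lambda_k\le\mu\}\lesssim\mu^{d/2}$ are absorbed, via Cauchy--Schwarz, by the exponential weight $e^{(\rho-\rho')\sqrt{\lambda_k}}$.

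\textbf{Step 3 (combination).} Setting $\varepsilon:=\sup_\omega|u(\cdot,T)|$, the two steps give
\[
\|f\|_{L^2}\le C\Lambda e^{C_S\sqrt\Lambda}\bigl(\varepsilon+e^{-\rho'\sqrt\Lambda}M\bigr)+e^{-\rho\sqrt\Lambda}M .
\]
Fix $\rho'\in(C_S,\rho)$, which is possible since $\rho>C_S$. Absorbing $\Lambda$ into the exponent by slightly enlarging $C_S$ to $C_S'<\rho'$, we obtain
\[
\|f\|\le C\bigl(e^{C_S'\sqrt\Lambda}\varepsilon+e^{-(\rho'-C_S')\sqrt\Lambda}M\bigr).
\]
Choose $\sqrt\Lambda=\rho'^{-1}\log(M/\varepsilon)$ to balance the two terms. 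This yields
\[
\|f\|\le CM^{1-\alpha}\varepsilon^\alpha,\qquad \alpha=\frac{\rho'-C_S'}{\rho'}\in(0,1),
\]
with $\alpha$ depending only on $C_S$ and $\rho$. If $\varepsilon\ge M$, or if the chosen $\Lambda<1$, the estimate follows trivially from $\|f\|\le M$.

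The main obstacle is Step~2: passing from $\sup_\omega|\Pi_\Lambda u(T)|$ to $\sup_\omega|u(T)|$. This needs a pointwise (sup-norm) control of the spectral tail, and the polynomial losses from eigenfunction sup-norms and eigenvalue counting must be swallowed by a small fraction of the Gevrey weight. This is why we need strict inequality $\rho>C_S$ and lose a little in $\alpha$.
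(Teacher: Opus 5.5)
Your proposal is correct and follows the paper's proof essentially step by step: the expansion $u(\cdot,T)=\sum_k b_k(T)f_k\phi_k$, the bound $\|\Pi_\Lambda f\|\le c_g^{-1}\Lambda\|\Pi_\Lambda u(\cdot,T)\|$, Proposition~\ref{specpro} combined with an $L^\infty$ bound on the spectral tail of $u(\cdot,T)$, the Gevrey tail for $(I-\Pi_\Lambda)f$, and the balancing choice $\sqrt\Lambda\sim\rho_1^{-1}\log(M/\delta)$. The differences are minor: you prove the tail bound of Lemma~\ref{lem:hf} via $\|\phi_k\|_\infty\lesssim\lambda_k^{d/4}$ and Weyl counting, which, done through heat-kernel bounds, needs no boundary regularity, unlike the paper's $H^{k_d}$ elliptic-regularity argument; you absorb the factor $\Lambda$ into the exponent before balancing rather than halving $\alpha_0$ afterwards; and you should add that the case $\varepsilon=0$ follows by letting $\Lambda\to\infty$.
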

\begin{remark}\label{comp}
The contrast between the two theorems reveals the role of the a priori regularity:
\begin{itemize}
    \item General $f$ requires full-time observation of $u$ and the stability is of order $|\log(\text{error})|^{-1/2}$.
    \item Analytic $f$ requires only the terminal observation $u(T)$ on $\omega$ and enjoys the sharper order $|{error}|^{\alpha}$ with $\alpha>0$.
\end{itemize}
Both results are new in allowing the observation set $\omega$ to be merely of positive Hausdorff measure instead of an open set.

\end{remark}

We point out that the hypothesis \eqref{3cog}  in Theorem~\ref{ganal}
are readily satisfied by a wide class of functions $g$.
For instance, if $g\in C^1[0,T]$ with $g(T)\neq0$ and $g$ does not change sign
on $[0,T]$, then integration by parts gives
\begin{align*}
b_k(T):&=\int_0^T g(s)\,e^{-\lambda_k(T-s)}ds\\
&= \frac{g(T)-g(0)e^{-\lambda_k T}}{\lambda_k} - \frac{1}{\lambda_k}\int_0^T g'(s)\,e^{-\lambda_k(T-s)}ds
= \frac{g(T)}{\lambda_k} + O\!\left(\frac{1}{\lambda_k^2}\right)
\end{align*}
as $\lambda_k\to\infty$.  Therefore, for all sufficiently large $k$,
$$
|b_k(T)| \ge \frac{|g(T)|}{2\lambda_k}.
$$
The sign condition on $g$ guarantees that $b_k(T)\neq0$ for the finitely many
remaining indices.  Thus, after possibly reducing the constant $c_g$, we obtain
$|b_k(T)|\ge \frac{c_g}{\lambda_k}$ for every $k$, and hypothesis \eqref{3cog}  is
verified.

\subsection{Organization}
The remainder of this paper is organized as follows.
Section~\ref{secbhsi} is devoted to the proof of the boundary
Hausdorff spectral inequality stated as Theorem~1.1.  After collecting the necessary spectral notation and
Hausdorff content tools, the proof is split according to the boundary
geometry: Section~\ref{subsec-flat} establishes the flat-patch case
(Theorem~\ref{thm:flat-bhsi}), while Section~\ref{subsec-c11} treats general $C^{1,1}$ boundaries (Theorem~\ref{thm:c11-bhsi}).  These two theorems together yield Theorem~1.1.
Section~\ref{secinte} proves some parabolic interpolation results by applying BHSI.
Section~\ref{secinte} applies this estimate to the inverse source problem with full-time boundary observations and proves Theorem 1.2.
Section~\ref{delsour} treats delayed-time boundary observations and proves Theorem~\ref{deHaus}.  Section~\ref{proof} establishes the stability results for interior observations, namely Theorem~\ref{thmgene} and  Theorem~\ref{ganal}.  Finally, Section~\ref{sinverinitial} presents a further application to the inverse initial data problem and establishes logarithmic conditional stability from Hausdorff boundary flux observations.
\label{inverinitial}

\section{Boundary Hausdorff spectral inequalities}
\label{secbhsi}
In this section we prove the boundary Hausdorff spectral inequality
announced as Theorem~1.1 in the introduction.  We first introduce the
spectral quantities and Hausdorff content used throughout the proof.
The main analytic input is a pair of quantitative propagation results
for gradients of elliptic equations from thin sets, recalled in
Section~\ref{subsec-prelim}.  The proof is then divided into two geometric
cases: the case where the boundary contains a flat patch is handled in
Section~\ref{subsec-flat}, and the general $C^{1,1}$ boundary case is
treated in Section~\ref{subsec-c11} by local flattening and an
odd-reflection argument.

\subsection{Notation and basic estimates}\label{subsec-prelim}

Fix $S_0>0$.  For $\Lambda\ge1$ and a finite family
$a=(a_j)_{\lambda_j\le\Lambda}\subset\C$, define $\mathcal A_{\Lambda}(a)$  by \eqref{A}, and
define the auxiliary elliptic extension
\begin{equation*}
U_{\Lambda,a}(x,s)
:=
\sum_{\lambda_j\le\Lambda}
a_j\phi_j(x)\cosh(\sqrt{\lambda_j}\,s).
\end{equation*}
Then
\begin{equation*}
(\Delta_x+\partial_s^2)U_{\Lambda,a}=0
\quad\text{in }\Omega\times\R,
\qquad
U_{\Lambda,a}=0
\quad\text{on }\partial\Omega\times\R.
\end{equation*}

We shall use two quantitative propagation results for gradients of
elliptic equations from thin sets. The flat and curved cases differ only
in the propagation theorem used after reflection.

\begin{proposition}
\label{grad}
Let $d\ge 2$.
\begin{enumerate}[label=\textup{(\roman*)}]
\item
Let $h$ be harmonic in $B_2\subset\R^d$.  Suppose that
$E\subset B_{1/2}$ is contained in an affine hyperplane and that, for some
$\eta>0$ and $m_0>0$,
$$
\mathcal C^{d-2+\eta}_\HH(E)\ge m_0.
$$
Then there exist $C\ge1$ and $\theta\in(0,1)$, depending only on
$d,\eta,m_0$, such that
\begin{equation*}
\sup_{B_{1/2}}|\nabla h|
\le
C\bigl(\sup_E|\nabla h|\bigr)^\theta
 \bigl(\sup_{B_2}|\nabla h|\bigr)^{1-\theta}.
\end{equation*}

\item
There exists $c_d\in(0,1)$, depending only on $d$, with the following property.  Let
$$
\operatorname{div}(\mathbf A\nabla h)=0
\quad\text{in }B_2,
$$
where $\mathbf A$ is symmetric and, uniformly elliptic, with fixed
ellipticity and $W^{1,\infty}$ bounds.  If
$$
E\subset B_{1/2},
\qquad
\mathcal C^{d-1-c_d+\eta}_\HH(E)\ge m_0
$$
for some $\eta>0$, then
\begin{equation*}
\sup_{B_{1/2}}|\nabla h|
\le
C\bigl(\sup_E|\nabla h|\bigr)^\theta
 \bigl(\sup_{B_2}|\nabla h|\bigr)^{1-\theta},
\end{equation*}
where $C$ and $\theta\in(0,1)$ depend only on the fixed geometric and ellipticity parameters, $d,\eta, m_0$.
\end{enumerate}
\end{proposition}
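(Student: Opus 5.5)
This proposition collects two quantitative propagation-of-smallness results for gradients of elliptic solutions from thin sets. These are essentially the results of Logunov--Malinnikova \cite{LogunovMalinnikova2018}, so my plan is to reduce each part to the corresponding statement there rather than reprove the underlying unique continuation machinery.

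\textbf{Part (ii).} This is precisely the gradient propagation theorem of \cite{LogunovMalinnikova2018}. For solutions of divergence-form equations with Lipschitz, uniformly elliptic symmetric coefficients, they prove a three-sphere type inequality
\[
\sup_{B_{1/2}}|\nabla h|\le C(\sup_E|\nabla h|)^\theta(\sup_{B_2}|\nabla h|)^{1-\theta}
\]
whenever $\mathcal C^{d-1-c_d+\eta}_\HH(E)$ is bounded below, with constants depending only on the listed parameters. I will quote it, checking three points.
\begin{enumerate}[label=(\alph*)]
\item The normalization of Hausdorff content is the one defined above; different conventions change the content only by dimensional constants, which are absorbed into $m_0$.
\item The radii $1/2$ and $2$ match theirs, up to a standard covering and rescaling argument.
\item The dependence on the coefficients enters only through the ellipticity constant and the Lipschitz bound of $\mathbf A$.
\end{enumerate}

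\textbf{Part (i).} Here I would use the hyperplane structure. Rotate and translate so that $E\subset\{x_d=a\}$. Since $h$ is harmonic, each partial derivative $\partial_{x_i}h$ is harmonic. The key fact is that on a hyperplane, smallness on a set of dimension $>d-2$ of the full gradient propagates: this is the planar/hyperplane version in \cite{LogunovMalinnikova2018}, where the threshold $d-2$ is sharp because harmonic functions can vanish together with their gradient on $(d-2)$-dimensional sets.

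The route I would try is as follows.
\begin{enumerate}[label=(\alph*)]
\item Consider the restriction $w=h(\cdot,a)$ to the hyperplane, and note that $w$ is real-analytic with quantitatively controlled analyticity radius. Its Taylor coefficients are bounded in terms of $\sup_{B_2}|\nabla h|$ via Cauchy estimates.
\item Apply a Remez-type inequality for analytic functions on $\R^{d-1}$ to $\nabla_{x'}w$ and to $\partial_d h(\cdot,a)$. Smallness on a set of positive $(d-2+\eta)$-content inside $\R^{d-1}$ is a set of codimension $<1$ in $\R^{d-1}$, which is exactly the situation handled by Logunov--Malinnikova's propagation results in dimension $d-1$ applied to the Cauchy data.
\item Smallness of the full Cauchy data $(h,\partial_d h)$ on a ball of the hyperplane is then propagated into $B_{1/2}$ by standard quantitative Cauchy-problem stability (Hölder) for harmonic functions.
\end{enumerate}

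\textbf{Main obstacle.} I expect step (b) to be the hardest. Controlling $h$ itself, and not only $\nabla_{x'}h$, on the hyperplane requires a normalization: subtract $h(x_0)$ for some $x_0\in E$, which is harmless because the conclusion involves only $\nabla h$. If the direct Remez approach fails, the fallback is simply to cite the hyperplane statement of \cite{LogunovMalinnikova2018} directly. Its constants depend only on $d,\eta,m_0$, which gives the claimed dependence.
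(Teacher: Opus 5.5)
Your part (ii) is the same as the paper's: a direct citation of the Logunov--Malinnikova gradient propagation theorem \cite{LogunovMalinnikova2018}, with sensible checks on normalization, radii and coefficient dependence. The paper gets part (i) from Lemma~2.3 of \cite{green2024observability}, a corollary of Malinnikova's propagation theorem for generalized Cauchy--Riemann systems \cite{Malinnikova2004}.

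Your own route for (i) has a genuine gap at step (b). The traces $\nabla_{x'}h(\cdot,a)$ and $\partial_d h(\cdot,a)$ are real-analytic on the hyperplane, but they satisfy no elliptic equation in $d-1$ variables. The Logunov--Malinnikova theorems are statements about solutions of $\operatorname{div}(\mathbf A\nabla u)=0$, so they cannot be ``applied to the Cauchy data''.

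What (b) actually needs is a quantitative two-constants estimate
$\sup_{B'}|F|\le C(\sup_E|F|)^{\theta}M^{1-\theta}$, with $\theta$ depending only on $d,\eta,m_0$. Here $F$ is real-analytic on a ball $B'\subset\R^{d-1}$, its holomorphic extension to a fixed complex neighbourhood is bounded by $M$, and $E\subset B'$ is any set with $\mathcal C^{d-2+\eta}_\HH(E)\ge m_0$.

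Such $E$ may be Lebesgue-null in $\R^{d-1}$. So the classical Remez/Nadirashvili inequality for analytic functions, which needs positive measure, does not apply. A normal-families argument gives only a non-quantitative modulus, not the H\"older form. This estimate is exactly the nontrivial content of the hyperplane result.

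It can be supplied, for example, as follows:
\begin{enumerate}
\item Take a Frostman measure on $E$.
\item Use quantitative energy slicing to find a family of parallel lines, of positive $(d-2)$-dimensional measure, whose slices of $E$ have logarithmic capacity bounded below.
\item Apply the one-variable two-constants theorem on each such line.
\item Finish with a Remez estimate from the resulting subset of $B'$ of positive $(d-1)$-measure.
\end{enumerate}
None of this is in your proposal.

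Your fallback is also misdirected. The general theorem of \cite{LogunovMalinnikova2018} only reaches the threshold $d-1-c_d$ of part (ii). The $d-2$ hyperplane statement should be taken from \cite{Malinnikova2004}, or \cite[Lemma~2.3]{green2024observability}, as the paper does.

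That viewpoint also removes your stated ``main obstacle''. The field $V=\nabla h$ satisfies $\operatorname{div}V=0$ and $\operatorname{curl}V=0$. Its restriction to the hyperplane is already full Cauchy data for this first-order elliptic system: $\partial_dV_j=\partial_jV_d$ for $j<d$, and $\partial_dV_d=-\sum_{j<d}\partial_jV_j$. So step (c) needs no control of $h$ itself and no normalization by $h(x_0)$. Your steps (a) and (c) are otherwise standard.
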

\begin{proof}
Part \textup{(i)} follows by Lemma 2.3 of \cite{green2024observability}, which is a corollary of  Malinnikova's generalized
Cauchy--Riemann propagation theorem  \cite{Malinnikova2004}.    Part \textup{(ii)} is the gradient
propagation theorem of Logunov--Malinnikova \cite{LogunovMalinnikova2018}, in
its fixed geometry Hausdorff-content form.
\end{proof}

We also use two standard auxiliary facts.
\begin{lemma}
\label{procon}
Let $F\subset\R^m$ be compact and let $J\subset\R$ be a bounded
interval.  If
$$
\mathcal C^q_\HH(F)\ge m_0>0,
$$
then
\begin{equation}
\mathcal C^{q+1}_\HH(F\times J)
\ge c\,m_0\,|J|,
\label{contpro}
\end{equation}
where $c>0$ depends only on $m$ and $q$.
\end{lemma}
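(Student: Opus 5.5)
The plan is to prove \eqref{contpro} via the mass distribution principle (Frostman's lemma) rather than by manipulating covers directly. A direct cover argument is awkward: a ball covering a piece of $F\times J$ need not project onto a product-like piece, and we would need to control how many short pieces of $J$ each ball sees.

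\textbf{Step 1: a Frostman measure on $F$.} Since $F$ is compact with $\mathcal C^q_\HH(F)\ge m_0$, Frostman's lemma gives a Borel measure $\mu$ supported on $F$ with
$$
\mu(B(x,r))\le r^q \quad\text{for all } x\in\R^m,\ r>0,
\qquad
\mu(F)\ge c_1 m_0 ,
$$
where $c_1>0$ depends only on $m$ and $q$. This is the standard dyadic-tree construction, which is exactly where compactness is used.

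\textbf{Step 2: the product measure.} Let $\nu=\mu\otimes\mathcal L^1|_J$. For any ball $B(z,r)\subset\R^{m+1}$ with $z=(x,t)$, we have $B(z,r)\subset B(x,r)\times(t-r,t+r)$. Hence
$$
\nu(B(z,r))\le \mu(B(x,r))\cdot 2r\le 2r^{q+1}.
$$

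\textbf{Step 3: mass distribution principle.} Take any countable cover of $F\times J$ by balls $B(z_j,r_j)$. Then
$$
\mu(F)\,|J|=\nu(F\times J)\le\sum_j\nu(B(z_j,r_j))\le 2\sum_j r_j^{q+1}.
$$
Taking the infimum over covers gives
$$
\mathcal C^{q+1}_\HH(F\times J)\ge \tfrac12\,\mu(F)\,|J|\ge \tfrac{c_1}{2}\,m_0\,|J|,
$$
which is \eqref{contpro} with $c=c_1/2$.

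The only genuine input is Step 1, Frostman's lemma for Hausdorff content with a dimensional constant. I would quote it from Mattila's book rather than reprove it. If one wanted a self-contained argument, the standard route is to pass to dyadic content, which is comparable to $\mathcal C^q_\HH$ up to constants depending on $m$ and $q$, and then build $\mu$ by the usual top-down mass redistribution on the dyadic tree. Steps 2 and 3 are routine.
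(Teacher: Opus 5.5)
Your proposal is correct and follows the paper's own proof: take a Frostman measure $\mu$ on $F$ with $\mu(B(x,r))\le r^q$ and $\mu(F)\ge c_1 m_0$, form $\nu=\mu\otimes\mathcal L^1|_J$, note that $\nu(B((x,t),r))\le 2r^{q+1}$, and conclude with the mass distribution principle. Your version simply spells out the covering step and gives the explicit constant $c=c_1/2$, which the paper leaves implicit.
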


\begin{proof}
By Frostman's lemma, see e.g., \cite{falconer1990fractal} and  \cite{mattila1995geometry}, there is a finite positive measure $\mu$ supported on $F$ such that
$$
\mu(F)\ge c_0m_0,
\qquad
\mu(B(x,r))\le r^q.
$$
For $\nu=\mu\otimes \mathcal L^1|_J$ one has
$$
\nu(B((x,t),r))\le 2r^{q+1},
\qquad
\nu(F\times J)\ge c_0m_0|J|.
$$
The mass-distribution principle \cite{falconer1990fractal} gives \eqref{contpro}.
\end{proof}

The following lemma gives the low frequency upper bound  and the interior concentration lower bound of $U_{\Lambda,a}$.

\begin{lemma}
\label{lowfre}
Let  $S_1>0$. Then the following statements hold.
\begin{enumerate}[label=\textup{(\roman*)}]
\item
If $K$ is a fixed compact subset of a coordinate cylinder contained in
$\overline\Omega\times(-S_1,S_1)$ and separated from the artificial end faces
$s=\pm S_1$, then
\begin{equation}
\sup_K\bigl(|U_{\Lambda,a}|+|\nabla_{x,s}U_{\Lambda,a}|\bigr)
\le
C_K e^{C_K\sqrt\Lambda}\cA_\Lambda(a),
\label{liftgrow}
\end{equation}
where $\cA_\Lambda(a)$ is defined by \eqref{A}.
\item
For every nonempty open set $\omega\Subset\Omega$,
\begin{equation}
\cA_\Lambda(a)
\le
C_\omega e^{C_\omega\sqrt\Lambda}
\norm{U_{\Lambda,a}(\cdot,0)}_{L^2(\omega)}.
\label{intespec}
\end{equation}
\end{enumerate}
\end{lemma}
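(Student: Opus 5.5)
Part (ii) is the Lebeau--Robbiano interior spectral inequality, expressed through the harmonic lift. Part (i) rests on elliptic regularity up to the boundary combined with the trivial bound $\cosh(\sqrt{\lambda_j}s)\le e^{\sqrt\Lambda S_1}$.

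\textbf{Part (i).} On $\overline\Omega\times(-S_1,S_1)$ we have $|\cosh(\sqrt{\lambda_j}s)|\le e^{S_1\sqrt\Lambda}$ and $|\sqrt{\lambda_j}\sinh(\sqrt{\lambda_j}s)|\le\sqrt\Lambda e^{S_1\sqrt\Lambda}$. I would view $U_{\Lambda,a}$ as the solution of the Dirichlet problem for the Laplacian in $(d+1)$ variables on the cylinder $\Omega\times(-S_1,S_1)$. It has zero lateral boundary values, and on the end faces $s=\pm S_1$ its data are controlled in $L^2$. Orthonormality of $\{\phi_j\}$ gives
\[
\|U_{\Lambda,a}\|_{L^2(\Omega\times(-S_1,S_1))}+\|\partial_sU_{\Lambda,a}\|_{L^2}+\|\nabla_xU_{\Lambda,a}\|_{L^2}\le C\sqrt\Lambda\,e^{S_1\sqrt\Lambda}\cA_\Lambda(a),
\]
where the last term uses $\|\nabla_x\sum a_j c_j\phi_j\|^2=\sum\lambda_j|a_jc_j|^2$. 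The compact set $K$ stays away from the end faces, and the lateral boundary is $C^{1,1}$ with homogeneous Dirichlet data. Boundary $W^{2,p}$ estimates, bootstrapped with $p>d+1$ and followed by Morrey's embedding, then bound $\sup_K(|U|+|\nabla U|)$ by $C_K\|U\|_{L^2}$ of a slightly larger neighbourhood. This is the same regularity bootstrap mentioned after \eqref{qO-Lambda}. Absorbing $\sqrt\Lambda\le e^{\sqrt\Lambda}$ gives \eqref{liftgrow}.

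\textbf{Part (ii).} Since $U_{\Lambda,a}(\cdot,0)=\sum_{\lambda_j\le\Lambda}a_j\phi_j$, this is exactly the Lebeau--Robbiano/Jerison--Lebeau spectral inequality
\[
\Bigl\|\sum_{\lambda_j\le\Lambda}a_j\phi_j\Bigr\|_{L^2(\Omega)}\le Ce^{C\sqrt\Lambda}\Bigl\|\sum_{\lambda_j\le\Lambda}a_j\phi_j\Bigr\|_{L^2(\omega)}.
\]
Its left side equals $\cA_\Lambda(a)$ by orthonormality. Alternatively, it follows from Proposition~\ref{specpro}, because an open $\omega$ has positive $\mathcal C^{d}_\HH$ content, combined with $\sup_\omega|v|\lesssim e^{C\sqrt\Lambda}\|v\|_{L^2(\omega')}$ on a slightly larger open set by interior elliptic estimates. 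I would simply cite the classical inequality.

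\textbf{Main obstacle.} If one wants a self-contained argument, the standard route is an interpolation (three-ball/Carleman) estimate for the harmonic lift. It has the form
\[
\|U\|_{H^1(\Omega\times(-1,1))}\le C\|U\|_{H^1(\Omega\times(-2,2))}^{1-\kappa}\bigl(\|U(\cdot,0)\|_{L^2(\omega)}+\|\partial_sU(\cdot,0)\|_{L^2(\omega)}\bigr)^{\kappa},
\]
where $\partial_sU(\cdot,0)=0$. Combining it with part (i) as the upper bound and with the lower bound $\|U\|_{L^2(\Omega\times(-1,1))}\ge c\,\cA_\Lambda(a)$, which holds since $\cosh\ge1$, yields $\cA_\Lambda\le Ce^{C\sqrt\Lambda}\|U(\cdot,0)\|_{L^2(\omega)}$. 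The hard part is the boundary Carleman estimate on a $C^{1,1}$ domain; I would take it from the literature rather than reprove it.
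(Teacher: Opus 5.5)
Your proposal is correct. For part (ii) you take the same route as the paper, which simply cites the spectral inequality. The only thing to pin down is the version you cite. The original Lebeau--Robbiano and Jerison--Lebeau results assume a smooth boundary. The paper instead cites Apraiz--Escauriaza--Wang--Zhang, which covers bounded Lipschitz, locally star-shaped domains and hence $C^{1,1}$ ones. That is exactly the ``boundary Carleman estimate on a $C^{1,1}$ domain'' you say you would take from the literature.

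For part (i) your route differs from the paper's. The paper just cites Lemma~2.8 of Chen--Liu--Liu. You give a self-contained argument in two steps:
\begin{itemize}
\item orthonormality together with $\cosh(\sqrt{\lambda_j}s)\le e^{S_1\sqrt\Lambda}$ gives $\|U_{\Lambda,a}\|_{L^2(\Omega\times(-S_1,S_1))}\le\sqrt{2S_1}\,e^{S_1\sqrt\Lambda}\cA_\Lambda(a)$;
\item local boundary $W^{2,p}$ estimates for the $(d+1)$-dimensional Laplacian with zero data on the $C^{1,1}$ lateral boundary, bootstrapped to $p>d+1$ and followed by Morrey's embedding, control $\sup_K(|U|+|\nabla U|)$.
\end{itemize}
This is sound. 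The a priori regularity the Gilbarg--Trudinger estimates require holds because each $\phi_j\in W^{2,p}$ near the $C^{1,1}$ (or flat) boundary portion. Your argument also shows why $C_K$ is independent of $\Lambda$: the lift is harmonic, so all growth comes from the $\cosh$ factor, and the exponent can be taken equal to $S_1$. The $H^1$ bounds carrying the extra $\sqrt\Lambda$ are not needed, since the bootstrap starts from $\|U\|_{L^2}$ alone. The citation is shorter; your version is transparent about where the constants come from.

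One caution concerns your alternative derivation of (ii) from Proposition~\ref{specpro}. The step ``$\sup_\omega|v|\lesssim e^{C\sqrt\Lambda}\|v\|_{L^2(\omega')}$ by interior elliptic estimates'' is not justified as stated.
\begin{itemize}
\item $v=\sum_{\lambda_j\le\Lambda}a_j\phi_j$ satisfies no local elliptic equation controlled by $v$ itself, because $\Delta v$ is a different band-limited function.
\item Going through the harmonic lift means bounding $U_{\Lambda,a}$ on a $(d+1)$-dimensional neighbourhood of $\omega\times\{0\}$ from its trace on $\omega$. That is a Cauchy problem, i.e.\ the very propagation-of-smallness step the spectral inequality encodes, so the argument is circular.
\end{itemize}
A possible repair is to apply the content-based inequality to the Chebyshev set $\{x\in\omega:|v(x)|\le(2/|\omega|)^{1/2}\|v\|_{L^2(\omega)}\}$, which has measure at least $|\omega|/2$. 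This needs the constant in Proposition~\ref{specpro} to depend only on a lower bound for the content, not on the set itself. Since you fall back on citing the classical inequality, this does not affect your proof.

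For your self-contained sketch, there is a cleaner way to reach the textbook form. Because $\partial_sU_{\Lambda,a}(\cdot,0)=0$, the function $V(x,s)=\int_0^sU_{\Lambda,a}(x,\sigma)\,d\sigma$ is again harmonic, with $V(\cdot,0)=0$ and $\partial_sV(\cdot,0)=v$. This is the usual $\sinh$ lift, to which the Lebeau--Robbiano interpolation inequality applies in its standard form.
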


\begin{proof}
 \eqref{liftgrow} is proved in Lemma 2.8 of \cite{ChenLiuLiu2026}.  Estimate
\eqref{intespec} is the classical Lebeau--Robbiano spectral
inequality. In fact, \cite{ApraizEscauriazaWangZhang2014} proved the Lebeau--Robbiano spectral
inequality for bounded Lipschitz locally star-shaped domains, which  particularly applies to bounded
$C^{1,1}$ domains.
\end{proof}

\subsection{Flat boundary patches}
\label{subsec-flat}

We first treat the simplified setting where $\partial\Omega$ contains a
relatively open flat patch.  In this case, after odd reflection, the
harmonic extension remains harmonic in a full neighbourhood, allowing a
direct application of the gradient propagation theorem for harmonic
functions.
\begin{theorem}[Flat-patch BHSI]
\label{thm:flat-bhsi}
Assume that $\Omega\subset\R^d$ is bounded and connected and that
$\partial\Omega$ contains a relatively open flat patch $\Sigma$.  Let
$\Gamma\Subset\Sigma$ be compact.  Suppose that, for some $q>d-2$,
\begin{equation*}
\mathcal C ^q_\HH(\Gamma)>0.
\end{equation*}
Then, for every $S_0>0$, there is $C\ge1$ such that
\begin{equation}
\cA_\Lambda(a)
\le
C e^{C\sqrt\Lambda}\OO_{\Gamma,S_0}(\Lambda,a)
\label{qflat-bhsi}
\end{equation}
for every $\Lambda\ge1$ and every finite coefficient family
$(a_j)_{\lambda_j\le\Lambda}\subset\C$, where $\OO_{\Gamma,S_0}$ is defined in \eqref{qO-Lambda}.
\end{theorem}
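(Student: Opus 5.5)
Throughout we write $U=U_{\Lambda,a}$. The plan is to reflect $U$ oddly across the flat patch, so that it becomes harmonic in a full ball in $\R^{d+1}$. We then propagate smallness of $\nabla U$ from the thin set $\Gamma\times[-S_0,S_0]$ into the interior using Proposition~\ref{grad}(i), and close the argument with the interior spectral inequality \eqref{intespec} and the growth bound \eqref{liftgrow}. The Hölder exponent $\theta$ turns the resulting three-ball inequality into an $e^{C\sqrt\Lambda}$ bound.

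\emph{Step 1 (reflection and reduction to the gradient).} After a rigid motion, assume $\Sigma\subset\{x_d=0\}$ and $\Omega$ lies locally in $\{x_d>0\}$. Since $\Gamma$ is compact in the relatively open set $\Sigma$, we can cover $\Gamma$ by finitely many small balls centered on $\Gamma$ whose doubles meet $\partial\Omega$ only inside $\Sigma$. By subadditivity of Hausdorff content, one of these pieces $\Gamma_0$ keeps $\mathcal C^q_\HH(\Gamma_0)\ge m_0>0$. Extend $U$ by odd reflection in $x_d$. Because $U=0$ on $\Sigma\times\R$, the extension $\tilde U$ is harmonic in a cylinder $B'\times(-S_1,S_1)\subset\R^{d+1}$, where $B'$ is a ball of radius $r$ centered at a point of $\Sigma$; we choose $S_1>S_0$. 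On $\Sigma$ the tangential derivatives of $U$ vanish, so $|\nabla_{x,s}\tilde U|=|\partial_\nu U|$ on $\Gamma_0\times(-S_0,S_0)$. Set $E=\Gamma_0\times[-S_0,S_0]$ (or a subinterval chosen to fit the ball). $E$ lies in the hyperplane $\{x_d=0\}$ of $\R^{d+1}$, and by Lemma~\ref{procon} it satisfies $\mathcal C^{q+1}_\HH(E)\ge c\,m_0$. Write $q+1=(d+1)-2+\eta$ with $\eta=q-(d-2)>0$; this is exactly where the hypothesis $q>d-2$ enters.

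\emph{Step 2 (propagation).} Rescale so that the reflected domain contains $B_2$ and $E\subset B_{1/2}$. Rescaling multiplies Hausdorff content by a fixed factor depending on $r,S_0$, so the content bound survives, and the time interval may be split into pieces so that each fits a ball. Proposition~\ref{grad}(i) in dimension $d+1$ gives
\[
\sup_{B_{1/2}}|\nabla\tilde U|\le C\,\OO_{\Gamma,S_0}(\Lambda,a)^{\theta}\bigl(\sup_{B_2}|\nabla\tilde U|\bigr)^{1-\theta}.
\]
Because $B_2$ stays away from $s=\pm S_1$ after the reflection is undone, \eqref{liftgrow} bounds $\sup_{B_2}|\nabla\tilde U|\le Ce^{C\sqrt\Lambda}\cA_\Lambda(a)$.

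\emph{Step 3 (to the interior and conclusion).} The half ball $B_{1/2}\cap\{x_d>0,\ |s|<\epsilon\}$ contains $\omega\times(-\epsilon,\epsilon)$ for some open $\omega\Subset\Omega$. On $\{s=0\}$ we have $U(\cdot,0)=0$ on $\Sigma$, so integrating $\partial_{x_d}U$ from $\Sigma$ gives
\[
\|U(\cdot,0)\|_{L^2(\omega)}\le C\sup_{B_{1/2}}|\nabla\tilde U|.
\]
Combining this with \eqref{intespec} yields
\[
\cA_\Lambda\le Ce^{C\sqrt\Lambda}\OO^{\theta}\bigl(e^{C\sqrt\Lambda}\cA_\Lambda\bigr)^{1-\theta}.
\]
Dividing by $\cA_\Lambda^{1-\theta}$ and raising to the power $1/\theta$ gives \eqref{qflat-bhsi} with the constant $C/\theta$; the case $\cA_\Lambda=0$ is trivial.

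The step I expect to be the main obstacle is the geometric bookkeeping in Steps 1--2. The rescaled set $E$ must fit inside $B_{1/2}$ while the doubled ball stays inside the reflected region of harmonicity, which may force $E$ to be shrunk to $\Gamma_0\times[-\epsilon,\epsilon]$ with a smaller $\epsilon$. The content lower bound then has to be tracked uniformly. Since $\Gamma_0$ and $\epsilon$ depend only on the geometry and not on $\Lambda$, all constants stay $\Lambda$-independent, and the only $\Lambda$-dependence enters through \eqref{liftgrow} and \eqref{intespec}.
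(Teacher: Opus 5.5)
Your proposal is correct and follows essentially the same route as the paper. Both arguments use odd reflection across the flat patch, the content lift of Lemma~\ref{procon} to a product $\Gamma\times J$ in $\R^{d+1}$, Proposition~\ref{grad}(i), the growth bound \eqref{liftgrow}, integration of the normal derivative into an interior ball, the Lebeau--Robbiano inequality \eqref{intespec}, and finally absorption of $\cA_\Lambda(a)^{1-\theta}$. Your localization to a piece $\Gamma_0$ via subadditivity, and your shrinking of the $s$-interval, make explicit what the paper leaves to ``a fixed rescaling''. The one step the paper includes that you omit is the routine reduction to real coefficients: apply the real harmonic propagation estimate to $U_{\Lambda,\Re a}$ and $U_{\Lambda,\Im a}$ separately.
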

\begin{proof}
It suffices to consider real coefficients $\{a_j\}$. In fact, the complex case follows by
applying the real estimate to real and imaginary parts.

After a rigid motion there is $r_0>0$ such that, in a neighborhood of the
observation set,
$$
\Omega=\{(x',x_d):x_d>0\},
\qquad
\partial\Omega=\{x_d=0\},
$$
and $\Gamma\subset\{x_d=0\}$.  Choose
$0<s_*<\min\{S_0,r_0/8\}$ and put $J_*=[-s_*,s_*]$.  In the lifted ambient
dimension
$$
\tilde d = d+1
$$
the set
$$
E:=\Gamma\times J_*
$$
is contained in the hyperplane $\{x_d=0\}\subset\R^{\tilde d}$.  By
\Cref{procon},
$$
\mathcal C^{q+1}_{\HH} (E)>0,
\qquad
q+1=\tilde d-2+\eta,
\qquad
\eta=q-(d-2)>0.
$$

Let $V(x',x_d,s)=U_{\Lambda,a}(x',x_d,s)$ for $x_d>0$ and define its odd
reflection by
$$
\widetilde V(x',x_d,s)
=
\begin{cases}
V(x',x_d,s),&x_d\ge0,\\
-V(x',-x_d,s),&x_d<0.
\end{cases}
$$
Because $V=0$ on $x_d=0$, the reflected function is harmonic in a full
neighborhood of $E$.  Moreover, on $x_d=0$ all tangential derivatives,
including the $s$ derivative, vanish.  Hence
\begin{equation*}
|\nabla_{x,s}\widetilde V(x',0,s)|
=
|\partial_\nu U_{\Lambda,a}(x',0,s)|.
\end{equation*}

After a fixed rescaling, by \Cref{grad} (i) there exist
compact sets $K\Subset K_1$ and constants $C\ge1$, $\theta\in(0,1)$ such
that
\begin{equation*}
\sup_K|\nabla\widetilde V|
\le
C\OO_{\Gamma,S_0}(\Lambda,a)^\theta
\bigl(\sup_{K_1}|\nabla\widetilde V|\bigr)^{1-\theta}.
\end{equation*}
By \Cref{lowfre} (i), $K_1$ can be further chosen to satisfy
$$
\sup_{K_1}|\nabla\widetilde V|
\le
Ce^{C\sqrt\Lambda}\cA_\Lambda(a).
$$
Choose a fixed nonempty ball $\omega_*\Subset\Omega$ in the upper part of
$K$, sufficiently close to the flat boundary.

Noting that
$$U_{\Lambda,a}(x',x_d,0)=\int^{x_d}_0 \partial_{r}U_{\Lambda,a}(x',r,0)dr,
$$
we have
$$
\norm{U_{\Lambda,a}(\cdot,0)}_{L^2(\omega_*)}
\le
C\sup_K|\nabla\widetilde V|.
$$
Applying
\eqref{intespec} and combining the preceding estimates, we then obtain
\begin{align}
\cA_\Lambda(a)
&\le C_{\omega_*} e^{C_{\omega_*}\sqrt\Lambda}
\norm{U_{\Lambda,a}(\cdot,0)}_{L^2(\omega_*)}\le C e^{C \sqrt\Lambda}\sup_K|\nabla\widetilde V|\le C e^{C \sqrt\Lambda} \OO_{\Gamma,S_0}(\Lambda,a)^\theta
\bigl(\sup_{K_1}|\nabla\widetilde V|\bigr)^{1-\theta}\nonumber\\
&\le
Ce^{C\sqrt\Lambda}
\OO_{\Gamma,S_0}(\Lambda,a)^\theta
\cA_\Lambda(a)^{1-\theta}.\label{dssdd}
\end{align}
Then
\eqref{qflat-bhsi} follows by \eqref{dssdd}.
\end{proof}

\subsection{General \texorpdfstring{$C^{1,1}$}{C11} boundaries}\label{subsec-c11}

For general $C^{1,1}$ boundaries, the flattening and reflection
procedure leads to a divergence-form elliptic equation with Lipschitz
coefficients.  We therefore appeal to the second part of
Proposition~\ref{grad}, which is designed for such equations.

\begin{theorem}[$C^{1,1}$ BHSI]
\label{thm:c11-bhsi}
Let $\Omega\subset\R^d$ be bounded, connected and of class $C^{1,1}$.  Set
$\tilde d=d+1$ and let $c_{\tilde d}$ be the dimensional constant from
\Cref{grad} (ii).  Let $\Gamma\subset\partial\Omega$ be
compact and suppose that, for some
\begin{equation*}
q>d-1-c_{\tilde d},
\end{equation*}
one has $\mathcal C^q_{\HH}(\Gamma)>0$.  Then, for every $S_0>0$, there is
$C\ge1$ such that
\begin{equation}
\cA_\Lambda(a)
\le
C e^{C\sqrt\Lambda}\OO_{\Gamma,S_0}(\Lambda,a)
\label{c11bhsi}
\end{equation}
for all $\Lambda\ge1$ and all finite coefficient families $(a_j)_{\lambda_j\le\Lambda}\subset\C$.
\end{theorem}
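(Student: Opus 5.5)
We follow the proof of \Cref{thm:flat-bhsi} step by step. The one new ingredient is that the boundary must be flattened locally before reflecting, and that the lifted equation is in divergence form with Lipschitz coefficients, so that \Cref{grad} (ii) applies in place of (i). As before, it suffices to treat real coefficients.

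\textbf{Localization.} Since $\Gamma$ is compact, we cover it by finitely many $C^{1,1}$ boundary charts. Hausdorff content is subadditive, so $\mathcal C^q_\HH(\Gamma)>0$ forces $\mathcal C^q_\HH(\Gamma\cap \overline{B(x_0,r)})>0$ for some chart centred at $x_0\in\partial\Omega$ with small radius $r$. We replace $\Gamma$ by this piece; this only decreases $\OO_{\Gamma,S_0}$.

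\textbf{Flattening.} In the chart, let $\Phi$ be a $C^{1,1}$ diffeomorphism (for example $y=(x',x_d-\psi(x'))$ with $\psi\in C^{1,1}$) that maps $\Omega$ locally to $\{y_d>0\}$ and $\partial\Omega$ to $\{y_d=0\}$. Extend it trivially in $s$ and set $W=U_{\Lambda,a}\circ\Phi^{-1}$. Then $W$ solves $\operatorname{div}_{y,s}(\mathbf A\nabla_{y,s}W)=0$, where $\mathbf A=|\det D\Phi|^{-1}D\Phi\,D\Phi^{T}\oplus|\det D\Phi|^{-1}$. Since $D\Phi\in W^{1,\infty}$, $\mathbf A$ is symmetric, uniformly elliptic and Lipschitz.

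\textbf{Reflection.} Reflect $W$ oddly, $\widetilde W(y',y_d,s)=-W(y',-y_d,s)$, and extend the coefficients by the even/odd rule: $\widetilde{\mathbf A}_{id}=\widetilde{\mathbf A}_{di}$ are odd in $y_d$ for $i\ne d$, and all other entries are even. The reflected $\widetilde W$ is then a weak solution of $\operatorname{div}(\widetilde{\mathbf A}\nabla\widetilde W)=0$ across $\{y_d=0\}$; this follows from the standard check with test functions split into even and odd parts, using $W=0$ on the interface.

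\textbf{Main obstacle.} The odd entries $\widetilde{\mathbf A}_{id}$ are only Lipschitz if $\mathbf A_{id}=0$ on $y_d=0$. Otherwise they jump there, and \Cref{grad} (ii) requires $W^{1,\infty}$ coefficients. To remove the jump, first choose a conormal-type flattening: for instance $\Phi^{-1}(y',y_d)=(y',\psi(y'))+y_d N(y')$, where $N$ is a $C^{0,1}$ field transversal to the boundary (a mollified normal with $\Phi$ adjusted appropriately, or the boundary normal itself). This makes $D\Phi^{-1}$ have $\partial_{y_d}$-column orthogonal to the tangential columns on $y_d=0$, so $\mathbf A_{id}(y',0)=0$. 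I would check carefully that this keeps $\mathbf A\in W^{1,\infty}$. With only $C^{1,1}$ regularity the normal is merely Lipschitz, so this is delicate. If it fails, I would fall back on the even/odd reflection of \cite{ChenLiuLiu2026}, or cite their reflected-equation lemma.

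\textbf{Gradient comparison and propagation.} On $y_d=0$ all tangential derivatives of $\widetilde W$ vanish, including $\partial_s$. Hence $|\nabla\widetilde W|\le C|\partial_\nu U_{\Lambda,a}|$ there, with $C$ coming from the chart. The set $E=\Phi(\Gamma)\times[-s_*,s_*]$ lies in $\R^{\tilde d}$, and by \Cref{procon} together with bi-Lipschitz invariance of content positivity,
\[
\mathcal C^{q+1}_\HH(E)>0,\qquad q+1=\tilde d-1-c_{\tilde d}+\eta,\qquad \eta=q-(d-1-c_{\tilde d})>0 .
\]
After rescaling so that $E\subset B_{1/2}$ and $B_2$ lies inside the reflected chart, \Cref{grad} (ii) gives
\[
\sup_{B_{1/2}}|\nabla\widetilde W|\le C\,\OO_{\Gamma,S_0}^\theta\bigl(\sup_{B_2}|\nabla\widetilde W|\bigr)^{1-\theta}.
\]
By \Cref{lowfre} (i) (pulled back through $\Phi$), $\sup_{B_2}|\nabla\widetilde W|\le Ce^{C\sqrt\Lambda}\cA_\Lambda(a)$.

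\textbf{Conclusion.} Pick a ball $\omega_*\Subset\Omega$ whose image lies in $B_{1/2}\cap\{y_d>0,s=0\}$. Integrating from the boundary, where $U_{\Lambda,a}=0$, gives $\|U_{\Lambda,a}(\cdot,0)\|_{L^2(\omega_*)}\le C\sup_{B_{1/2}}|\nabla\widetilde W|$. Applying \eqref{intespec} then yields
\[
\cA_\Lambda\le Ce^{C\sqrt\Lambda}\OO^\theta\cA_\Lambda^{1-\theta},
\]
and dividing through gives \eqref{c11bhsi} with the constant $C/\theta$.
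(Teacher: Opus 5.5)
Your proposal is correct and follows the paper's proof essentially step for step: reduce to one $C^{1,1}$ chart by subadditivity, flatten and lift with \Cref{procon}, odd-reflect to a divergence-form equation with $W^{1,\infty}$ coefficients, apply \Cref{grad} (ii), bound the large factor by \Cref{lowfre} (i), and close with the Lebeau--Robbiano inequality after integrating from the Dirichlet boundary. The mixed-coefficient issue you flag is genuine, and the paper handles it by citing \cite[Lemma~2.3]{ChenLiuLiu2026}; of your two suggested fixes only the mollified one works, because with the exact (merely Lipschitz) normal $N$ the Jacobian $D\Phi^{-1}$ contains $y_d\,\partial_{y'}N$, which is bounded but not Lipschitz in $y'$, whereas e.g.\ $\Phi^{-1}(y',y_d)=(y',\psi(y'))+y_d\,(N*\varrho_{y_d})(y')$, with $\varrho_t$ an even mollifier at scale $t$, is $C^{1,1}$ and still has $\partial_{y_d}\Phi^{-1}(y',0)=N(y')$, so the mixed entries vanish on $\{y_d=0\}$.
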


\begin{proof}
Choose a finite $C^{1,1}$ boundary atlas.  Since Hausdorff content is
subadditive, one chart contains a compact subset $\Gamma_0\subset\Gamma$ with
$\mathcal C^q_\HH (\Gamma_0)>0$.  The boundary flattening map $\Phi$  is bi-Lipschitz, so
its image $\widehat\Gamma_0\subset\R^{d-1}$ also has positive $q$-content.
Choose $J_*=[-s_*,s_*]\Subset(-S_0,S_0)$.  By
\Cref{procon},
\begin{equation}
\mathcal C^{q+1}_{\HH}(\widehat\Gamma_0\times J_*)>0.
\label{lift}
\end{equation}
Since $\tilde d=d+1$,
\begin{equation}
q+1=\tilde d-1-c_{\tilde d}+\eta,
\qquad
\eta:=q-(d-1-c_{\tilde d})>0.
\label{eta}
\end{equation}
Denote $\tilde{U}(x,s)=U(\Phi^{-1}(x),s)$.
In normal boundary coordinates, extended trivially in $s$, the function
$\tilde U_{\Lambda,a}$ solves a divergence-form equation
$$
\operatorname{div}(\mathbf A\nabla v)=0
$$
in the upper half-cylinder, where $\mathbf A$ is symmetric, uniformly
elliptic and Lipschitz.  By the Dirichlet odd-reflection lemma of
Chen--Liu--Liu \cite[Lemma~2.3]{ChenLiuLiu2026}, the odd reflection
$\widetilde v$ solves
$$
\operatorname{div}(\widetilde{\mathbf A}\nabla\widetilde v)=0
$$
in a full ball, with $\widetilde{\mathbf A}\in W^{1,\infty}$ uniformly
elliptic, and on the reflecting hyperplane
\begin{equation*}
|\nabla\widetilde v|
\sim
|\partial_\nu U_{\Lambda,a}|.
\end{equation*}
The constants depend only on the fixed $C^{1,1}$ chart.

Now \eqref{lift}--\eqref{eta} allow us to apply
\Cref{grad} (ii).  Exactly as in the flat case, we obtain
fixed compact sets $K\Subset K_1$ and $\theta\in(0,1)$ such that
$$
\sup_K|\nabla\widetilde v|
\le
C\OO_{\Gamma,S_0}(\Lambda,a)^\theta
\bigl(\sup_{K_1}|\nabla\widetilde v|\bigr)^{1-\theta}.
$$
The estimate \eqref{liftgrow} controls the large factor by
$Ce^{C\sqrt\Lambda}\cA_\Lambda(a)$.  Integrating along short normal curves
from the Dirichlet boundary to a fixed interior ball $\omega_*\Subset\Omega$
and applying \eqref{intespec}, we conclude
$$
\cA_\Lambda(a)
\le
Ce^{C\sqrt\Lambda}
\OO_{\Gamma,S_0}(\Lambda,a)^\theta
\cA_\Lambda(a)^{1-\theta}.
$$
Then  \eqref{c11bhsi} follows.
\end{proof}

The following remark gives the literature origin of the two dimension thresholds of flat patches and general $C^{1,1}$ domains.
\begin{remark}
The threshold $d-2$ in \Cref{thm:flat-bhsi} comes from Malinnikova's
codimension-two hyperplane propagation theorem after adding the auxiliary
variable.  The weaker $C^{1,1}$ threshold $d-1-c_{\tilde d}$ comes solely from the
currently available Logunov--Malinnikova theorem for gradients of solutions
with Lipschitz coefficients.  Chen--Liu--Liu \cite{ChenLiuLiu2026} provides the $C^{1,1}$ boundary
flattening and odd-reflection mechanism used above.
\end{remark}

\section{From BHSI to parabolic interpolation}
\label{secinte}

From now on, $\Gamma$ is any compact boundary set for which either
\eqref{qflat-bhsi} or \eqref{c11bhsi} holds.  We write this common
estimate as
\begin{equation}
\cA_\Lambda(a)
\le
C_Be^{C_B\sqrt\Lambda}
\sup_{\substack{x\in\Gamma\\ |s|\le S_0}}
\left|
\sum_{\lambda_j\le\Lambda}
 a_j\cosh(\sqrt{\lambda_j}s)\partial_\nu\phi_j(x)
\right|.
\tag{BHSI}
\label{qBH-SI}
\end{equation}

For a time interval $I\Subset(0,\infty)$ define
\begin{equation*}
\norm{F}_{\YY_\Gamma(I)}
:=
\left(
\int_I \sup_{x\in\Gamma}|F(x,t)|^2\,dt
\right)^{1/2}.
\end{equation*}
If this quantity is infinite, all estimates below are understood in the
extended sense and are trivially true.

\subsection{Analytic continuation of the boundary heat trace}

The parabolic step follows the time analytic mechanism used by
Chen--Liu--Liu \cite{ChenLiuLiu2026}: the auxiliary factor
$\cosh(s\sqrt\lambda)$ is recovered from all time derivatives of the heat
boundary trace.

We first prove the uniform holomorphy of the boundary flux.
\begin{lemma}
\label{boho}
Let $0<a<b<\infty$ and let $I_0\Subset(a,b)$. Assume that
$\partial\Omega$ is of class $C^{1,1}$ in a neighborhood of
$\overline{\Gamma}$.
Then there exist $r>0$ and $C>0$ such that, for every
$h\in L^2(\Omega)$, the map
$$
z\longmapsto
\left.\partial_\nu(e^{-zA}h)\right|_\Gamma
$$
is holomorphic from
$$
\{z\in\mathbb C:\dist(z,I_0)<r\}
$$
into $C(\Gamma)$ and
$$
\sup_{\dist(z,I_0)<r}
\|\partial_\nu(e^{-zA}h)\|_{C(\Gamma)}
\le C\|h\|_{L^2(\Omega)}.
$$
In particular, for every $x\in\Gamma$,
$$
F_x(z):=\partial_\nu(e^{-zA}h)(x)
$$
is holomorphic on the same neighborhood, uniformly in $x$.
\end{lemma}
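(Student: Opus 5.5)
We will prove Lemma~\ref{boho} by combining the analyticity of the heat semigroup $e^{-zA}$ in a sector with a fixed-order smoothing step and a boundary trace estimate in $W^{2,p}$ for some $p>d$.

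First we fix the complex neighbourhood. Since $I_0\Subset(a,b)\subset(0,\infty)$, we choose $r>0$ so small that
\[
N_r:=\{z:\dist(z,I_0)<r\}
\]
lies in the sector $\{|\arg z|<\pi/4\}$, and also satisfies $\operatorname{Re}z\ge t_0:=\tfrac12\inf I_0>0$ on $N_r$. The Dirichlet Laplacian $A$ is self-adjoint and positive, so $z\mapsto e^{-zA}$ is holomorphic from $\{\operatorname{Re}z>0\}$ into $\mathcal L(L^2(\Omega))$. The same holds into $\mathcal L(L^2,\Dom(A^m))$ for every $m$, because
\[
\|A^m e^{-zA}\|\le \sup_{\lambda>0}\lambda^m e^{-\lambda\operatorname{Re}z}\le C_m t_0^{-m}
\]
uniformly on $N_r$. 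We factor
\[
e^{-zA}h=e^{-(t_0/2)A}\,e^{-(z-t_0/2)A}h .
\]
Then $w(z):=e^{-(z-t_0/2)A}h$ is holomorphic into $L^2$ with $\|w(z)\|\le\|h\|$. The fixed operator $e^{-(t_0/2)A}$ maps $L^2$ boundedly into $\Dom(A^m)$ for any $m$.

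Second, we need a bounded linear map
\[
T:\Dom(A^m)\to C(\Gamma),\qquad v\mapsto\partial_\nu v|_\Gamma ,
\]
for some fixed $m$. This is the main obstacle, since $\partial\Omega$ is only $C^{1,1}$ near $\Gamma$, so we cannot bootstrap to $C^\infty$. We would first use the global regularity $\Dom(A)=H^2\cap H^1_0$, valid on $C^{1,1}$ domains. We then bootstrap locally near $\overline\Gamma$: take a cutoff $\chi$ supported in the $C^{1,1}$ neighbourhood and apply the local $W^{2,p}$ Dirichlet estimate to $\chi A^{k}v$. Iterating with Sobolev embedding $W^{2,p}\hookrightarrow W^{1,p^*}$ raises $p$ finitely many times until $p>d$. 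This gives
\[
\|v\|_{W^{2,p}(\Omega\cap U)}\le C\|A^m v\|_{L^2}
\]
for a neighbourhood $U$ of $\overline\Gamma$ and some $m=m(d)$. This is the same bootstrap the paper alludes to after \eqref{qO-Lambda}. Since $W^{2,p}\hookrightarrow C^{1,1-d/p}$ up to the boundary for $p>d$, the trace $\nabla v|_{\partial\Omega\cap U}$ is continuous, and the unit normal is Lipschitz. Hence $\|\partial_\nu v\|_{C(\Gamma)}\le C\|A^m v\|_{L^2}$.

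Finally, composing gives
\[
\partial_\nu(e^{-zA}h)|_\Gamma=T\,e^{-(t_0/2)A}\,w(z).
\]
This is a bounded linear operator applied to an $L^2$-valued holomorphic function, so it is holomorphic into $C(\Gamma)$. The uniform bound $C\|h\|$ on $N_r$ follows from $\|w(z)\|\le\|h\|$. Holomorphy of $F_x$ for each $x$, uniformly in $x$, follows because point evaluation $\delta_x$ is a norm-one functional on $C(\Gamma)$. The bootstrap in the second step is the only delicate point: we must keep track of commuting $\chi$ with $A$, whose commutator terms are lower order and are absorbed by the induction.
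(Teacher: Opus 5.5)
Your proposal is correct and follows essentially the same route as the paper. Both arguments factor $e^{-zA}=e^{-\delta A}e^{-(z-\delta)A}$ with a fixed $\delta>0$, show that $h\mapsto\partial_\nu(e^{-\delta A}h)|_\Gamma$ is bounded from $L^2(\Omega)$ into $C(\Gamma)$ via local $W^{2,p}$ regularity ($p>d$) and the embedding into $C^{1,\alpha}$, and then compose with the $L^2$-valued holomorphic map $z\mapsto e^{-(z-\delta)A}h$. The only difference is how the $L^2\to W^{2,p}$ smoothing is obtained: the paper uses the $L^2\to L^p$ smoothing of the heat semigroup plus a single $W^{2,p}$ estimate, while you use a finite elliptic bootstrap on powers of $A$ with cutoffs, which is equally valid and avoids heat-kernel bounds.
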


\begin{proof}
Choose $\delta>0$ and $r>0$ sufficiently small so that
$$
\Re z>2\delta
$$
whenever $\dist(z,I_0)<r$, and so that the shifted neighborhood
$$
\{z-\delta:\dist(z,I_0)<r\}
$$
is compactly contained in the sector of analyticity of the semigroup $e^{-tA}$.

Fix $p>d$. By the positive time $L^2$--$L^p$ smoothing properties of analytic heat
semigroups \cite{O2025} and the $W^{2,p}$ Dirichlet regularity on $C^{1,1}$ domains
\cite{GT}, for every $\delta>0$ and every $p>d$ one has
$$
\|e^{-\delta A}f\|_{W^{2,p}(\Omega\cap U)}
\le C_{\delta,p}\|f\|_{L^2(\Omega)}.
$$
Since $p>d$, the embedding
$$
W^{2,p}(\Omega\cap U)\hookrightarrow C^{1,\alpha}
(\overline{\Omega\cap U'})
$$
holds for every $U'\Subset U$, with
$\alpha=1-d/p>0$. Hence
$$
T_\delta f
:=
\left.\partial_\nu(e^{-\delta A}f)\right|_\Gamma
$$
defines a bounded operator
$$
T_\delta:L^2(\Omega)\longrightarrow C(\Gamma).
$$

For $\dist(z,I_0)<r$, the semigroup property gives
$$
\left.\partial_\nu(e^{-zA}h)\right|_\Gamma
=
T_\delta e^{-(z-\delta)A}h.
$$
The map
$$
z\longmapsto e^{-(z-\delta)A}h
$$
is $L^2(\Omega)$-valued holomorphic on this neighborhood.
Since $T_\delta$ is bounded, the boundary-flux map is
$C(\Gamma)$-valued holomorphic. Moreover, boundedness of the analytic
semigroup on compact subsectors yields
$$
\sup_{\dist(z,I_0)<r}
\|\partial_\nu(e^{-zA}h)\|_{C(\Gamma)}
\le
C\|h\|_{L^2(\Omega)}.
$$
Finally, evaluation at any $x\in\Gamma$ is a bounded linear functional
on $C(\Gamma)$, which proves the asserted scalar holomorphy.
\end{proof}

We also need the one-dimensional analytic propagation estimate.
\begin{lemma}
\label{anpro}
Let $J\subset\R$ be a nondegenerate compact interval and let
$t_*\in\operatorname{int}J$.  Fix a complex neighborhood $\mathcal U$ of
$J$.  There exist $C>0$, $\vartheta\in(0,1)$ and $R>0$, depending only on
$J,t_*,\mathcal U$, such that every holomorphic function $F$ on $\mathcal U$
with $\sup_{\mathcal U}|F|\le M$ satisfies
\begin{equation*}
|F^{(k)}(t_*)|
\le
C k! R^{-k}
M^{1-\vartheta}\norm{F}_{L^2(J)}^{\vartheta},\quad k\ge0.
\end{equation*}
\end{lemma}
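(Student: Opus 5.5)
The proof splits into two steps: Cauchy's estimates reduce the derivative bound to a sup bound on a small disc around $t_*$, and a two-constants (harmonic measure) argument bounds that sup by $M^{1-\vartheta}\norm{F}_{L^2(J)}^{\vartheta}$.

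\emph{Cauchy estimates.} Choose $R>0$ with $\overline{D(t_*,2R)}\subset\mathcal U$. For $k\ge0$, Cauchy's formula on the circle $|z-t_*|=R$ gives
\[
|F^{(k)}(t_*)|\le k!\,R^{-k}\sup_{D(t_*,R)}|F|.
\]
It therefore suffices to prove
\[
\sup_{D(t_*,R)}|F|\le C M^{1-\vartheta}\norm{F}_{L^2(J)}^{\vartheta}
\]
with constants independent of $F$ and $k$.

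\emph{From $L^2(J)$ to a sup bound.} The $L^2$ norm is too weak to feed directly into a two-constants argument, so I first produce a pointwise bound on a subinterval. Fix a compact subinterval $J'\subset\operatorname{int}J$ containing $t_*$ in its interior. Pick $\rho>0$ so that $D(x,\rho)\subset\mathcal U$ for every $x\in J'$; shrinking $\mathcal U$ to a fixed compact neighbourhood of $J$ is harmless. For $x\in J'$, Cauchy's estimate for $F'$ gives $|F'|\le CM$ on $J'$. Set $\epsilon:=\sup_{J'}|F|$, attained at some $x_0$. Then $|F(x)|\ge\epsilon/2$ on an interval around $x_0$ of length $\min(\epsilon/(2CM),\rho)$. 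Hence
\[
\norm{F}_{L^2(J)}^2\ge c\,\epsilon^2\min\!\left(\frac{\epsilon}{M},1\right),
\]
which yields
\[
\epsilon\le C\bigl(M^{1/3}\norm{F}_{L^2(J)}^{2/3}+\norm{F}_{L^2(J)}\bigr).
\]
The second term is dominated by the first when $\norm{F}_{L^2(J)}\le CM$, and that always holds because $\norm{F}_{L^2(J)}\le|J|^{1/2}M$. So
\[
\sup_{J'}|F|\le C M^{1/3}\norm{F}_{L^2(J)}^{2/3}.
\]

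\emph{Two-constants theorem (main obstacle).} Let $G$ be a simply connected domain with $J'\cup\overline{D(t_*,R)}\subset \overline G$ and $\overline G\subset\mathcal U$, after shrinking $R$ if necessary. Since $\log|F|$ is subharmonic on $G\setminus J'$, it is bounded by the harmonic function with boundary values $\log M$ on $\partial G$ and $\log\sup_{J'}|F|$ on $J'$. This gives
\[
|F(z)|\le \Bigl(\sup_{J'}|F|\Bigr)^{\omega(z)}M^{1-\omega(z)},
\]
where $\omega$ is the harmonic measure of $J'$ in $G\setminus J'$. The delicate point is a uniform positive lower bound for $\omega$ on $\overline{D(t_*,R)}$. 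Because $J'$ is a nondegenerate segment, it is a regular compact set for the Dirichlet problem, so $\omega$ is continuous up to $J'$ and equals $1$ there; it is also positive and harmonic on the connected set $G\setminus J'$. Together with $t_*\in\operatorname{int}J'$, this gives $\omega\ge\vartheta_0>0$ on $\overline{D(t_*,R)}$ once $R$ is small. Since $\sup_{J'}|F|\le M$, the bound monotonizes to exponent $\vartheta_0$. Combining with the previous step gives
\[
\sup_{D(t_*,R)}|F|\le C M^{1-\vartheta}\norm{F}_{L^2(J)}^{\vartheta},\qquad \vartheta=\tfrac{2}{3}\vartheta_0,
\]
and the Cauchy estimate then finishes the proof. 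All constants depend only on $J,t_*,\mathcal U$.
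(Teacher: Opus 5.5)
Your proof is correct. It follows the same three-stage architecture as the paper: smallness on a real subinterval around $t_*$, then a two-constants/harmonic-measure step to reach a complex disc, then Cauchy's estimate. The first stage, however, is done by a genuinely different and more elementary argument.

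The paper feeds the Cauchy bounds $|F^{(j)}(t)|\le CMj!\rho^{-j}$ into Vessella's quantitative propagation of smallness for real-analytic functions, applied to the real and imaginary parts of $F$. This gives $\sup_{|t-t_*|\le2\rho}|F|\le CM^{1-\alpha}\|F\|_{L^2(J)}^{\alpha}$ with an unspecified $\alpha$. You use only the first-derivative bound $|F'|\le CM$ and the fact that $L^2$ smallness is available on the whole interval. That yields the explicit bound $\sup_{J'}|F|\le CM^{1/3}\|F\|_{L^2(J)}^{2/3}$ without any external theorem. What Vessella's theorem buys is robustness: it would still work if smallness were known only on an arbitrary measurable subset of $J$ of positive measure, which this lemma does not need.

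Your harmonic measure of the slit $J'$ in a simply connected $G$ is a variant of the paper's upper/lower half-disc argument, where the harmonic measure of the diameter is explicit. Yours relies instead on regularity of the segment for the Dirichlet problem and the minimum principle, which is equally standard.

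Two details to tidy. First, the interval around $x_0$ on which $|F|\ge\epsilon/2$ must lie inside $J$ and inside the region where $|F'|\le CM$ holds. So take $\rho\le\dist(J',\R\setminus J)$ and derive the derivative bound on all of $J$, which is possible since $J$ is compact in the open set $\mathcal U$. Second, you need $\overline{D(t_*,R)}\subset G$, not just $\subset\overline G$, so that $\omega$ has a positive minimum there.
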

\begin{proof}
Choose $\rho>0$ such that
$$
[t_*-4\rho,t_*+4\rho]\subset J,
\qquad
\overline{D(t_*,4\rho)}\subset\mathcal U,
$$
where $D(t_*,r):=\{ z\in \mathbb C: |z-t_*|<r\}$.
By Cauchy's estimates,
$$
|F^{(j)}(t)|
\le C M j!\rho^{-j},
\qquad
|t-t_*|\le 3\rho,\quad j\ge0.
$$
The quantitative propagation of smallness estimate for real-analytic
functions of Vessella \cite{Vessella1999} (see also
\cite[Theorem~4]{ApraizEscauriazaWangZhang2014}), applied in one
dimension to the real and imaginary parts of $F$, yields
$$
\sup_{|t-t_*|\le2\rho}|F(t)|
\le
C M^{1-\alpha}\|F\|_{L^2(J)}^\alpha
$$
for some $\alpha\in(0,1)$ depending only on $J,t_*$ and $\mathcal U$.

Applying the two-constants theorem   for subharmonic functions $\log |F(z)|$
(e.g.   \cite[Theorem~4.3.7]{Ransford1995}) in the  upper half-disk
 $D^+(t_*,2\rho)=\{z: |z-z_*|<2\rho, \Im z>0\}$, and using the uniform lower bound
for the harmonic measure of the diameter on $\overline{D^+(t_*,\rho)}$, one has
$$
\sup_{D^+(t_*,\rho)}|F|
\le
C M^{1-\vartheta}\|F\|_{L^2(J)}^\vartheta
$$
with some $\vartheta\in(0,1)$ depending only on the fixed $t_*,J$ and $\mathcal{U}$.
Then with the same argument in the lower half-disks  $D^-(t_*,2\rho)$ and $\overline{D^-(t_*,\rho)}$, we obtain
$$
\sup_{D(t_*,\rho)}|F|
\le
C M^{1-\vartheta}\|F\|_{L^2(J)}^\vartheta.
$$
Cauchy's derivative estimate therefore gives
$$
|F^{(k)}(t_*)|
\le
C k!\rho^{-k}
M^{1-\vartheta}\|F\|_{L^2(J)}^\vartheta,
\qquad k\ge0.
$$
\end{proof}

 The following proposition gives the positive time interpolation from Hausdorff boundary flux.
\begin{proposition}
\label{potiin}
Let $0<t_1<t_*<t_2$.  Then there exist $C>0$ and $\beta\in(0,1)$ such that,
for every $h\in L^2(\Omega)$,
\begin{equation*}
\begin{aligned}
{
\norm{e^{-t_*A}h}_{L^2(\Omega)}
\le
C\norm{h}_{L^2(\Omega)}^{1-\beta}
\norm{\partial_\nu e^{-tA}h}_{\YY_\Gamma((t_1,t_2))}^{\beta}.
}
\end{aligned}
\end{equation*}
\end{proposition}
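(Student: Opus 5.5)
Let $h\in L^2(\Omega)$ with coefficients $h_j=\langle h,\phi_j\rangle$. The plan is to combine three ingredients: analytic propagation in time (\Cref{boho}, \Cref{anpro}), a Taylor-series identity that converts time derivatives at $t_*$ into the $\cosh$ extension, and \eqref{qBH-SI} with a low/high frequency splitting.

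\textbf{Step 1: control of time derivatives of the boundary flux.} Apply \Cref{boho} with $I_0$ a small complex neighbourhood of $J=[t_1,t_2]$. For each $x\in\Gamma$, $F_x(z)=\partial_\nu(e^{-zA}h)(x)$ is holomorphic near $J$, with $\sup|F_x|\le C\|h\|_2$ uniformly in $x$. \Cref{anpro} with $M=C\|h\|_2$ then gives, uniformly in $x\in\Gamma$,
$$
|F_x^{(k)}(t_*)|\le Ck!R^{-k}\|h\|_2^{1-\vartheta}\|F_x\|_{L^2(J)}^{\vartheta}\le Ck!R^{-k}\|h\|_2^{1-\vartheta}\delta^{\vartheta},
$$
where $\delta:=\norm{\partial_\nu e^{-tA}h}_{\YY_\Gamma((t_1,t_2))}$ and we used $\|F_x\|_{L^2(J)}\le\delta$.

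\textbf{Step 2: from time derivatives to the $\cosh$ extension.} Fix $\Lambda\ge1$ and set $a_j=e^{-t_*\lambda_j}h_j$ for $\lambda_j\le\Lambda$. Then formally
$$
\sum_{\lambda_j\le\Lambda}a_j\cosh(\sqrt{\lambda_j}s)\partial_\nu\phi_j=\sum_{m\ge0}\frac{s^{2m}}{(2m)!}\sum_{\lambda_j\le\Lambda}\lambda_j^m a_j\partial_\nu\phi_j,
$$
and $\sum_j\lambda_j^ma_j\partial_\nu\phi_j=(-1)^mF_x^{(m)}(t_*)$ up to the high-frequency tail $\sum_{\lambda_j>\Lambda}$. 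This tail is the main obstacle, and I would handle it by not truncating. Define the full extension $W(x,s)=\sum_j e^{-t_*\lambda_j}h_j\cosh(\sqrt{\lambda_j}s)\partial_\nu\phi_j(x)=\sum_m \frac{s^{2m}}{(2m)!}(-1)^mF_x^{(m)}(t_*)$. Step 1 gives $|W|\le C\|h\|^{1-\vartheta}\delta^\vartheta\sum_m \frac{m!}{(2m)!}(S_0^2/R)^m$, which is finite for any $S_0$ because $m!/(2m)!$ decays superexponentially. The tail $W-W_\Lambda$ equals $\partial_\nu$ of $\sum_{\lambda_j>\Lambda}e^{-t_*\lambda_j}\cosh(\sqrt{\lambda_j}s)h_j\phi_j=e^{-(t_*/2)A}g$, where $g$ has coefficients $e^{-t_*\lambda_j/2}\cosh(\sqrt{\lambda_j}s)h_j$, so $\|g\|_2\le Ce^{-c\Lambda}\|h\|_2$ for $|s|\le S_0$. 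Using the bounded operator $T_{t_*/2}:L^2\to C(\Gamma)$ from the proof of \Cref{boho}, we get $\sup_{\Gamma\times[-S_0,S_0]}|W-W_\Lambda|\le Ce^{-c\Lambda}\|h\|_2$.

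\textbf{Step 3: BHSI and optimization in $\Lambda$.} By \eqref{qBH-SI},
$$
\Bigl(\sum_{\lambda_j\le\Lambda}e^{-2t_*\lambda_j}|h_j|^2\Bigr)^{1/2}\le C_Be^{C_B\sqrt\Lambda}\bigl(C\|h\|^{1-\vartheta}\delta^\vartheta+Ce^{-c\Lambda}\|h\|\bigr),
$$
while $\bigl(\sum_{\lambda_j>\Lambda}e^{-2t_*\lambda_j}|h_j|^2\bigr)^{1/2}\le e^{-t_*\Lambda}\|h\|$. Since $e^{C\sqrt\Lambda-c\Lambda}\le C'e^{-c'\Lambda}$, summing gives
$$
\|e^{-t_*A}h\|_2\le Ce^{C\sqrt\Lambda}\|h\|^{1-\vartheta}\delta^\vartheta+Ce^{-c\Lambda}\|h\|.
$$
If $\delta\ge\|h\|$ the claim is trivial. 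Otherwise choose $\Lambda$ with $e^{-c\Lambda}=(\delta/\|h\|)^{\vartheta/2}$. Then $e^{C\sqrt\Lambda}\le C_\epsilon e^{\epsilon\Lambda}$, so this factor is a small power of $\|h\|/\delta$. This yields the estimate with some $\beta\in(0,1)$, e.g. $\beta=\vartheta/4$ after adjusting constants.
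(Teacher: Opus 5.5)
Your proposal is correct and follows essentially the same route as the paper: uniform time-analytic propagation via Lemma~\ref{boho} and Lemma~\ref{anpro}, resummation of the time derivatives at $t_*$ into $\partial_\nu\cosh(s\sqrt A)e^{-t_*A}h$, an $e^{-c\Lambda}$ bound on the spectral tail via the half-step smoothing $e^{-t_*A/2}$, then \eqref{qBH-SI} plus the trivial high-frequency bound and a logarithmic choice of $\Lambda$. For the tail you reuse the operator $T_{t_*/2}$ from the proof of Lemma~\ref{boho}, which is a slightly cleaner packaging of the paper's $W^{2,p}$ argument; the one loose end, $\delta$ close to $\|h\|_2$ where your $\Lambda$ could drop below $1$, is closed by the contraction bound $\|e^{-t_*A}h\|_2\le\|h\|_2$, just as the paper does for $\delta\ge M/2$.
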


\begin{proof}
Set
$$
M:=\norm{h}_{L^2(\Omega)},
\qquad
\delta:=\norm{\partial_\nu e^{-tA}h}_{\YY_\Gamma((t_1,t_2))}.
$$
If $M=0$ there is nothing to prove.  Choose a compact interval
$J\Subset(t_1,t_2)$ containing $t_*$ in its interior.  For each $x\in\Gamma$,
applying \Cref{anpro} to
$$
F_x(t)=\partial_\nu(e^{-tA}h)(x),
$$
and using  \Cref{boho} and the inequality
$\norm{F_x}_{L^2(J)}\le\delta$, one obtains that there exist $C,R>0$ and
$\vartheta\in(0,1)$ such that
\begin{equation*}
\sup_{x\in\Gamma}|F_x^{(k)}(t_*)|
\le
C k! R^{-k}M^{1-\vartheta}\delta^\vartheta,
\qquad k\ge0.
\end{equation*}
Since
$$
F_x^{(k)}(t_*)
=(-1)^k\partial_\nu(A^ke^{-t_*A}h)(x),
$$
the normally convergent series
\begin{equation*}
\partial_\nu\bigl(\cosh(s\sqrt A)e^{-t_*A}h\bigr)(x)
=
\sum_{k=0}^\infty
\frac{(-1)^ks^{2k}}{(2k)!}F_x^{(k)}(t_*)
\end{equation*}
gives, for $|s|\le S_0$,
\begin{equation}
\sup_{\substack{x\in\Gamma\\|s|\le S_0}}
\left|
\partial_\nu\bigl(\cosh(s\sqrt A)e^{-t_*A}h\bigr)(x)
\right|
\le
C M^{1-\vartheta}\delta^\vartheta.
\label{fab}
\end{equation}
Indeed,
$\sum_{k\ge0}k!R^{-k}S_0^{2k}/(2k)!<\infty$.

Let $\Pi_\Lambda$ be the spectral projector of $A$ onto
$\{\lambda_j\le\Lambda\}$.  The high frequency auxiliary tail satisfies
\begin{equation}
\sup_{\substack{x\in\Gamma\\|s|\le S_0}}
\left|
\partial_\nu\cosh(s\sqrt A)(I-\Pi_\Lambda)e^{-t_*A}h(x)
\right|
\le
Ce^{-c\Lambda}M
\label{eqat}
\end{equation}
for all sufficiently large $\Lambda$.
To justify this, choose $p>d$,  the boundary estimate
$\|\partial_\nu w\|_{C(\Gamma)}\le C\|w\|_{W^{2,p}(\Omega_0)}$ on a fixed
$C^{1,1}$ neighborhood $\Omega_0$ of $\Gamma$, see e.g. \cite{GT},  reduces
$\|\partial_\nu w\|_{C(\Gamma)}$ to the $L^p$ norms of $w$ and $Aw$. Then  splitting
$e^{-t_*A}=e^{-t_*A/2}e^{-t_*A/2}$, we have
\begin{align*}
&\|\cosh(s\sqrt{A})(I-\Pi_{\Lambda})e^{-t_*A}h \|_{L^p}+\|A\cosh(s\sqrt{A})(I-\Pi_{\Lambda})e^{-t_*A}h \|_{L^p}\\
&\le C \|\|\cosh(s\sqrt{A})(I-\Pi_{\Lambda})e^{-\frac{1}{2}t_*A}h \|_{L^2}+C\|A\cosh(s\sqrt{A})(I-\Pi_{\Lambda})e^{-\frac{1}{2}t_*A}h \|_{L^2}\\
&\le C  e^{-c \Lambda} \|h\|_{L^2(\Omega)}
\end{align*}
where in the last inequality we have used that  on $\{\lambda>\Lambda\}$ the multipliers
$$
(1+\lambda)e^{-t_*\lambda/2}\cosh(S_0\sqrt\lambda)
$$
are bounded by $Ce^{-c\lambda}$.  This proves \eqref{eqat}.

Applying \Cref{cbhsi} to
$a_j=e^{-t_*\lambda_j}(h,\phi_j)$ for $\lambda_j\le\Lambda$, we obtain  from
\eqref{fab}--\eqref{eqat} that
\begin{equation*}
\norm{P_\Lambda e^{-t_*A}h}_{L^2}
\le
Ce^{C\sqrt\Lambda}
\left(M^{1-\vartheta}\delta^\vartheta+e^{-c\Lambda}M\right).
\end{equation*}
The complementary part obeys
\begin{equation*}
\norm{(I-P_\Lambda)e^{-t_*A}h}_{L^2}
\le e^{-t_*\Lambda}M.
\end{equation*}
Hence
\begin{equation}
\norm{e^{-t_*A}h}_{L^2}
\le
Ce^{C\sqrt\Lambda}M^{1-\vartheta}\delta^\vartheta
+Ce^{-c_0\Lambda+C\sqrt\Lambda}M
\label{preop}
\end{equation}
for some $c_0>0$.

When $0<\delta<M/2$, choose
$$
\Lambda=K\log(M/\delta)
$$
with a fixed sufficiently large $K$.  Since
$\sqrt L\le\varepsilon L+C_\varepsilon$ for $L\ge0$, the first term on the
right of \eqref{preop} is bounded by
$CM^{1-\beta}\delta^\beta$ for any fixed $\beta<\vartheta$, after choosing
$\varepsilon$ small.  The second term has the same bound if $K$ is large
enough.  If $\delta\ge M/2$, the contraction estimate
$\norm{e^{-t_*A}h}_2\le M$ gives the conclusion after increasing $C$.
\end{proof}

\subsection{A backward logarithmic estimate}
\label{sec:backward-log}

\begin{lemma}
\label{lemlog}
Let $\sigma>0$, $t_*>0$, and let $h\in\Dom(A^\sigma)$ satisfy
\begin{equation*}
\norm{A^\sigma h}_{L^2(\Omega)}\le M.
\end{equation*}
Then there exists $C=C(\sigma,t_*,\lambda_1)>0$ such that
\begin{equation}
{}
\norm{h}_{L^2(\Omega)}
\le
CM
\left[
\log\left(e+\frac{M}{\norm{e^{-t_*A}h}_{L^2(\Omega)}}\right)
\right]^{-\sigma}.
\label{backlogg}
\end{equation}
\end{lemma}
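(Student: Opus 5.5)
The plan is to split $h$ spectrally at a level $\Lambda\ge\lambda_1$ and balance the two parts. Write $h_k=\langle h,\phi_k\rangle$, $\varepsilon:=\norm{e^{-t_*A}h}_{L^2(\Omega)}$, and assume $M>0$ and $h\neq0$ (otherwise the estimate is trivial). Note that $\varepsilon>0$ as soon as $h\neq0$, and that $\varepsilon\le e^{-t_*\lambda_1}\norm{h}_{L^2}\le e^{-t_*\lambda_1}\lambda_1^{-\sigma}M$.

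For the high-frequency part, spectral calculus gives
$$
\sum_{\lambda_k>\Lambda}|h_k|^2\le \Lambda^{-2\sigma}\sum_k\lambda_k^{2\sigma}|h_k|^2\le \Lambda^{-2\sigma}M^2 .
$$
For the low-frequency part we have
$$
\sum_{\lambda_k\le\Lambda}|h_k|^2\le e^{2t_*\Lambda}\sum_k e^{-2t_*\lambda_k}|h_k|^2=e^{2t_*\Lambda}\varepsilon^2 .
$$
Together these give
$$
\norm{h}_{L^2}\le e^{t_*\Lambda}\varepsilon+\Lambda^{-\sigma}M .
$$

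Now set $L:=\log(e+M/\varepsilon)$. I plan to choose $\Lambda:=\frac{1}{2t_*}\log(M/\varepsilon)$ when this exceeds $\lambda_1$, which holds when $M/\varepsilon$ is large. With this choice the first term becomes $e^{t_*\Lambda}\varepsilon=M^{1/2}\varepsilon^{1/2}=M(\varepsilon/M)^{1/2}$. Since $x^{1/2}\le C_\sigma[\log(e+1/x)]^{-\sigma}$ for $x\in(0,\infty)$, this term is at most $CML^{-\sigma}$. The second term is $M(2t_*)^\sigma[\log(M/\varepsilon)]^{-\sigma}$. When $M/\varepsilon\ge e^{2}$, we have $\log(M/\varepsilon)\ge c\log(e+M/\varepsilon)$, so this term is also at most $CML^{-\sigma}$.

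In the remaining regime, $M/\varepsilon$ is bounded by a constant $K(t_*,\lambda_1)$, namely $\max\{e^2,e^{2t_*\lambda_1}\}$. There $L\le\log(e+K)$ is bounded, and it suffices to use $\norm{h}_{L^2}\le\lambda_1^{-\sigma}\norm{A^\sigma h}\le\lambda_1^{-\sigma}M\le C M L^{-\sigma}$ with $C=\lambda_1^{-\sigma}\log(e+K)^{\sigma}$.

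The only delicate point is this bookkeeping: ensuring that $\Lambda\ge\lambda_1$ and that $\log(M/\varepsilon)$ is comparable to $\log(e+M/\varepsilon)$. The case split above handles both, and the resulting constant depends only on $\sigma$, $t_*$ and $\lambda_1$, as claimed.
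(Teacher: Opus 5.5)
Your proposal is correct and follows essentially the paper's argument: the same low/high spectral splitting at $\Lambda=\frac{1}{2t_*}\log(M/\varepsilon)$, with the low-frequency part reduced to $\sqrt{M\varepsilon}$ and absorbed into the logarithmic modulus, and the bounded-ratio regime handled by $\norm{h}_{L^2(\Omega)}\le\lambda_1^{-\sigma}M$. One small correction: the inequality $x^{1/2}\le C_\sigma[\log(e+1/x)]^{-\sigma}$ holds only for bounded $x$ (e.g. $x\in(0,1]$), not on $(0,\infty)$, but you only apply it with $x=\varepsilon/M\le e^{-2}$, so the argument is unaffected.
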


\begin{proof}
Set $\eta=\norm{e^{-t_*A}h}_2$.  For every $\Lambda\ge\lambda_1$,
\begin{align*}
\norm{h}_2^2
\le
\sum_{\lambda_j\le\Lambda}|(h,\phi_j)|^2
+
\Lambda^{-2\sigma}\norm{A^\sigma h}_2^2
\le
e^{2t_*\Lambda}\eta^2+M^2\Lambda^{-2\sigma}.
\end{align*}
If $\eta\ge M/2$, \eqref{backlogg} follows from
$\norm{h}_2\le\lambda_1^{-\sigma}M$.  If $0<\eta<M/2$, choose
$$
\Lambda=\max\left\{\lambda_1,
\frac{1}{2t_*}\log\frac{M}{\eta}\right\}.
$$
Then the low-frequency term is bounded by $M\eta$, while
$r\le C_\sigma[\log(1/r)]^{-2\sigma}$ for $0<r\le1/2$ shows that
$M\eta$ is bounded by the same logarithmic modulus as the high-frequency
term.  Then  \eqref{backlogg} follows.  The
trivial case $\eta=0$ follows by injectivity of $e^{-t_*A}$.
\end{proof}

\section{Inverse separated source from Hausdorff boundary flux}
\label{inversour}

Let $g\in W^{1,\infty}(0,T)$ with
\begin{equation*}
g(0)\ne0,
\end{equation*}
and consider
\begin{equation*}
\begin{cases}
\partial_tu+Au=g(t)f,&0<t<T,\\
u(0)=0.
\end{cases}
\end{equation*}
The Dirichlet boundary condition is incorporated in $A$.

The natural boundary quantity is the time derivative of the measured flux.
Recall $X_{\Gamma}$ and $\mathfrak D_{\Gamma,T}$ defined in \eqref{XGama} and \eqref{ASC} respectively.

The following lemma gives the Volterra inversion in an abstract function space setting.
\begin{lemma}
\label{lem:volterra}
Let $X$ be a Banach space and let $q,p\in L^2(0,T;X)$ satisfy
\begin{equation}
p(t)
=
g(0)q(t)+\int_0^t g'(t-s)q(s)\,ds.
\label{vol}
\end{equation}
Then
\begin{equation}
\norm{q}_{L^2(0,T;X)}
\le
C_g\norm{p}_{L^2(0,T;X)},
\label{volinv}
\end{equation}
where $C_g$ depends only on $T$, $|g(0)|^{-1}$ and
$\norm{g'}_{L^\infty(0,T)}$.
\end{lemma}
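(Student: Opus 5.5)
We prove \Cref{lem:volterra} by reducing \eqref{vol} to a Volterra equation of the second kind and solving it with a Gronwall argument applied to the pointwise norm $t\mapsto\norm{q(t)}_X$. The Banach-space setting adds nothing beyond scalar estimates, because every step uses only the triangle inequality and Bochner integrability.

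First divide \eqref{vol} by $g(0)\neq0$. This gives
$$
q(t)=\frac{p(t)}{g(0)}-\frac1{g(0)}\int_0^t g'(t-s)q(s)\,ds,
$$
which holds for a.e.\ $t\in(0,T)$ as an identity in $X$. Set $\phi(t):=\norm{q(t)}_X$ and $\psi(t):=\norm{p(t)}_X$, both in $L^2(0,T)$, and put $K:=\norm{g'}_{L^\infty(0,T)}/|g(0)|$. Taking norms and applying the triangle inequality for Bochner integrals gives
$$
\phi(t)\le |g(0)|^{-1}\psi(t)+K\int_0^t\phi(s)\,ds\quad\text{for a.e. } t.
$$

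Next apply Gronwall's inequality in integral form to $\Phi(t):=\int_0^t\phi$. This function is absolutely continuous and satisfies $\Phi'\le |g(0)|^{-1}\psi+K\Phi$ a.e. Multiplying by $e^{-Kt}$ and integrating yields
$$
\Phi(t)\le|g(0)|^{-1}\int_0^te^{K(t-s)}\psi(s)\,ds.
$$
Substituting this back into the pointwise bound gives
$$
\phi(t)\le|g(0)|^{-1}\psi(t)+K|g(0)|^{-1}e^{KT}\int_0^t\psi(s)\,ds.
$$
Cauchy--Schwarz then bounds the last integral by $\sqrt T\norm{\psi}_{L^2}$. Taking the $L^2(0,T)$ norm in $t$ gives \eqref{volinv} with
$$
C_g=|g(0)|^{-1}\bigl(1+KTe^{KT}\bigr),
$$
which depends only on $T$, $|g(0)|^{-1}$ and $\norm{g'}_{L^\infty}$.

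The one technical point is measurability: the convolution $\int_0^tg'(t-s)q(s)\,ds$ must be a well-defined Bochner integral, and $\phi$ must be measurable. This holds because $q\in L^2(0,T;X)\subset L^1(0,T;X)$ and $g'\in L^\infty$. An alternative is to use the Neumann series of the Volterra operator, whose $n$-th iterate has norm at most $(KT)^n/n!$. That gives the same kind of bound, but the Gronwall route above is shorter. Neither step presents a real obstacle.
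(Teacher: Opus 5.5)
Your proof is correct, but it takes a different route from the paper. The paper works at the operator level. It writes \eqref{vol} as $(I+\mathcal V)q=p/g(0)$ with $(\mathcal Vq)(t)=g(0)^{-1}\int_0^t g'(t-s)q(s)\,ds$, and bounds the $L^1(0,T)$ norm of the $n$-fold iterated kernel by $(KT)^n/n!$, where $K=\|g'\|_{L^\infty}/|g(0)|$ as in your notation. It then uses Young's convolution inequality to get operator-norm convergence of the Neumann series $\sum_{n\ge0}(-\mathcal V)^n$ on $L^2(0,T;X)$.

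You instead pass immediately to the scalar majorant $\phi(t)=\|q(t)\|_X$ and derive the a.e.\ integral inequality. You close it with Gronwall applied to the absolutely continuous function $\Phi(t)=\int_0^t\phi$. Each step checks out: $\Phi'=\phi$ a.e., $e^{-Kt}\Phi$ is absolutely continuous so the a.e.\ differential inequality integrates, and Cauchy--Schwarz is applied correctly. Your constant $|g(0)|^{-1}(1+KTe^{KT})$ has the required dependence.

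The paper's route gives slightly more. It shows that $I+\mathcal V$ is actually invertible on $L^2(0,T;X)$, so it yields existence as well as the estimate. Summing the series also gives the marginally sharper constant $e^{KT}/|g(0)|$, since $e^{x}\le 1+xe^{x}$ for $x\ge0$.

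Your route is a pure a priori estimate. That is all the lemma asks for, since $q$ is given as a solution. It is also a bit more elementary: a one-line Gronwall argument on a scalar function replaces the iterated-kernel computation. Both versions give the uniqueness for the Volterra equation that the paper later invokes in the case $\delta_\tau=0$ of Theorem~\ref{deHaus}.
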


\begin{proof}
Write \eqref{vol} as
$(I+K)q=p/g(0)$, where
$$
(Kq)(t)=\frac1{g(0)}\int_0^t g'(t-s)q(s)\,ds.
$$
The $n$-fold Volterra kernel has $L^1(0,T)$ norm bounded by
$$
\frac1{n!}
\left(
\frac{T\norm{g'}_\infty}{|g(0)|}
\right)^n.
$$
Young's inequality therefore implies convergence in operator norm of
$\sum_{n\ge0}(-K)^n$ on $L^2(0,T;X)$, which proves
\eqref{volinv}.
\end{proof}

\noindent
{\bf Proof of Theorem \ref{insourth}.}
\begin{proof}
Let $0<t_1<t_*<t_2<T$, and
$
v(t)=e^{-tA}f.
$
Duhamel's formula gives
\begin{equation*}
u(t)=\int_0^t g(t-s)v(s)\,ds.
\end{equation*}
For $f\in\mathfrak D_{\Gamma,T}$ the normal traces belong to
$L^2(0,T;C(\Gamma))$, and differentiation of the Banach-valued convolution
gives
\begin{equation*}
\partial_t\partial_\nu u(t)
=
g(0)\partial_\nu v(t)
+
\int_0^t g'(t-s)\partial_\nu v(s)\,ds
\end{equation*}
in $L^2(0,T;C(\Gamma))$.  Applying \Cref{lem:volterra} with
$X=C(\Gamma)$, one has
\begin{equation*}
\norm{\partial_\nu v}_{L^2(0,T;C(\Gamma))}
\le
C_g\delta_{\mathrm{src}}.
\end{equation*}
In particular,
$$
\norm{\partial_\nu v}_{\YY_\Gamma((t_1,t_2))}
\le C_g\delta_{\mathrm{src}}.
$$
By \Cref{potiin}, we have
\begin{equation*}
\norm{e^{-t_*A}f}_2
\le
C\norm{f}_2^{1-\beta}
\delta_{\mathrm{src}}^\beta.
\end{equation*}
Since $\lambda_1>0$ is the smallest eigenvalue of $A$, one has  $\norm{f}_2\le\lambda_1^{-\sigma}M$. Hence
\begin{equation*}
\norm{e^{-t_*A}f}_2
\le
CM^{1-\beta}\delta_{\mathrm{src}}^\beta.
\end{equation*}
Finally applying \Cref{lemlog} with $h=f$ yields \eqref{stabi}.  Note that  the power $\beta$ inside the intermediate
H\"older estimate changes only the constant in the logarithm and not the final
logarithmic exponent.
\end{proof}

Since $\Dom(A^{1/2})=H_0^1(\Omega)$ and
$\|A^{1/2}f\|_2=\|\nabla f\|_2$, \Cref{insourth} immediately gives the following corollary.

\begin{corollary}[$H_0^1$ source class under trace admissibility]
\label{cor:H1-source}
Assume in addition that
$$
f\in H_0^1(\Omega)\cap\mathfrak D_{\Gamma,T},
\qquad
\|\nabla f\|_{L^2(\Omega)}\le M.
$$
Then
$$
\|f\|_{L^2(\Omega)}
\le
CM
\left[
\log\left(e+\frac{M}{\delta_{\mathrm{src}}}\right)
\right]^{-1/2}.
$$
\end{corollary}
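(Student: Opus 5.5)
The corollary is a direct specialization of \Cref{insourth} to $\sigma=1/2$. The plan is to verify that the hypotheses of \Cref{insourth} hold with this choice, and then read off the estimate.

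\emph{Identification of the source class.} Since $A$ is the positive Dirichlet Laplacian, the quadratic form of $A$ is $\langle Af,f\rangle=\norm{\nabla f}_{L^2(\Omega)}^2$ for $f\in\Dom(A)$. The domain of the square root is the form domain, so $\Dom(A^{1/2})=H_0^1(\Omega)$. For every $f\in H_0^1(\Omega)$ this gives
\[
\norm{A^{1/2}f}_{L^2(\Omega)}^2=\sum_k\lambda_k|f_k|^2=\norm{\nabla f}_{L^2(\Omega)}^2 .
\]
The middle identity follows by expanding $\nabla f$ in the eigenbasis, using $\langle\nabla\phi_j,\nabla\phi_k\rangle=\lambda_k\delta_{jk}$, and extending from $\Dom(A)$ by density in $H_0^1(\Omega)$.

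\emph{Checking the hypotheses.} The assumptions $f\in H_0^1(\Omega)\cap\mathfrak D_{\Gamma,T}$ and $\norm{\nabla f}_2\le M$ become
\[
f\in\Dom(A^{1/2})\cap\mathfrak D_{\Gamma,T},\qquad \norm{A^{1/2}f}_2\le M .
\]
This is exactly the hypothesis of \Cref{insourth} with $\sigma=1/2>0$. The geometric assumption on $\Gamma$ (Hypothesis~\ref{hyg}) and the assumption $g\in W^{1,\infty}(0,T)$ with $g(0)\neq0$ are the standing assumptions carried over from the theorem.

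\emph{Conclusion.} Estimate \eqref{stabi} with $\sigma=1/2$ is precisely the claimed bound. The constant $C$ depends on the same data as in \Cref{insourth}, with $\sigma$ now fixed.

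There is no genuine obstacle. The only point needing care is the identification $\Dom(A^{1/2})=H_0^1(\Omega)$ with equality of norms, which is the standard form-domain characterization (Kato's square root problem being trivial in the self-adjoint case). If one prefers to avoid citing it, the bound $\norm{A^{1/2}f}_2\le\norm{\nabla f}_2$ suffices, and it follows from the spectral computation above.
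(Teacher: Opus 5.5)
Your proposal is correct and matches the paper's argument: the paper obtains the corollary by applying \Cref{insourth} with $\sigma=1/2$, using $\Dom(A^{1/2})=H_0^1(\Omega)$ and $\|A^{1/2}f\|_2=\|\nabla f\|_2$, exactly as you do. Your fallback remark is also right: the Bessel-type inequality $\sum_k\lambda_k|f_k|^2\le\|\nabla f\|_2^2$ gives both $f\in\Dom(A^{1/2})$ and the needed norm bound.
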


\begin{remark}
A boundary supremum of $\partial_\nu e^{-tA}f$ can be singular as $t\downarrow0$
for rough $f$.  Therefore one cannot simply assert that
$\partial_t\partial_\nu u\in L^2(0,T;C(\Gamma))$ for every
$f\in H_0^1(\Omega)$.  This is why \Cref{insourth} explicitly
separates the spectral prior $f\in\Dom(A^\sigma)$ from the boundary-trace
admissibility condition $f\in\mathfrak D_{\Gamma,T}$.  If a stronger smooth
source class is imposed so that the boundary trace is continuous down to
$t=0$, the admissibility condition is automatic.  Alternatively, one may use
delayed observations and analytic continuation, at the price of a weaker
(double-logarithmic) modulus, as in \cite{ChoulliYamamoto2004}. This also motivates the delayed-time observation  inverse source problem studied in Section \ref{delsour} below.
\end{remark}

\section{Delayed-time Hausdorff boundary measurements for the inverse source problem}
\label{delsour}

We now consider the inverse source problem when boundary measurements are
available only after a strictly positive time.  The argument is inspired by
the time analytic continuation method of Choulli--Yamamoto
\cite{ChoulliYamamoto2004}.  There are, however, two important differences.
First, the observation set may have zero surface measure and the observation
norm is therefore the Hausdorff-compatible norm
$L^2_tC_x(\Gamma)$.  Second, the quantity measured here is the time
derivative of the Dirichlet flux, which is precisely the quantity entering
the Volterra reduction of the separated source problem.

Throughout this section,
$$
A=-\Delta: H_0^1(\Omega) \cap H^2(\Omega) \to L^2(\Omega)
$$
denotes the positive Dirichlet Laplacian on a bounded $C^{1,1}$ domain
$\Omega\subset\mathbb R^d$, $d\ge2$, and
$$
X_\Gamma:=C(\Gamma;\mathbb C).
$$
We assume that $\Gamma\subset\partial\Omega$ satisfies the {\bf Hypothesis}  \ref{hyg} in Section 1.

Fix
$$
0<\tau<T.
$$
We consider \eqref{2source} with $g$ satisfying conditions listed in Theorem \ref{deHaus}.

The proof of Theorem \ref{deHaus} is divided into three steps.
The first gives the precise
near-zero boundary smoothing required by the Hausdorff trace norm.  The
second is a quantitative delayed-to-full analytic continuation lemma.  The
third combines that lemma with the Volterra equation, BHSI and the
backward logarithmic estimate.

\subsubsection*{Step 1. Sectorial boundary smoothing near $t=0$}

We first establish the a priori analytic bound which replaces the uniform boundary bound used in the classical delayed-observation argument.

\begin{lemma}
\label{nortra}
Assume
$
\sigma>\frac d4.
$
Then one can choose
$
0<\beta<\frac12
$
and $\theta_0\in(0,\theta)$ such that, for every
$h\in\operatorname{Dom}(A^\sigma)$, the map
\begin{equation}\label{5.1p}
Q_h(z)
:=
\left.
\partial_\nu e^{-zA}h
\right|_\Gamma
\end{equation}
is $X_\Gamma$-valued holomorphic in
$\mathcal S_{\theta_0,R}$ and satisfies
\begin{equation}
\|Q_h(z)\|_{X_\Gamma}
\le
C|z|^{-\beta}
\|A^\sigma h\|_{L^2(\Omega)},
\qquad
z\in\mathcal S_{\theta_0,R}.
\label{secnortr}
\end{equation}
\end{lemma}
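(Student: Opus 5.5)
We write $e^{-zA}h = A^{-\sigma}e^{-zA}A^\sigma h$, so it suffices to bound the normal trace of $e^{-zA}$ acting on $w:=A^\sigma h\in L^2(\Omega)$, with a loss of $A^{-\sigma}$ gained back. We take the trace bound from $W^{2,p}$ regularity near $\Gamma$, as in the proofs of \Cref{boho} and \eqref{eqat}, but now via fractional powers. Fix $p>d$ and $\epsilon>0$ small. By the Sobolev embedding on the $C^{1,1}$ domain,
$$
\|\partial_\nu v\|_{C(\Gamma)}\le C\|v\|_{C^1(\overline\Omega)}\le C\|v\|_{H^{s}(\Omega)},\qquad s=\tfrac d2+1+\epsilon .
$$
This requires the graph-norm characterization $\Dom(A^{s/2})\hookrightarrow H^{s}(\Omega)$. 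For $s\le2$ it is the standard identification of fractional domains of the Dirichlet Laplacian with interpolation spaces. For $d\ge2$ we have $s>2$, however, and $C^{1,1}$ regularity does not give $H^s$ beyond $2$.

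To avoid this, we work in $L^p$ instead. The Dirichlet Laplacian generates a bounded analytic semigroup on $L^p(\Omega)$ with $\Dom(A_p)=W^{2,p}\cap W^{1,p}_0$ (by the Agmon–Douglis–Nirenberg estimates on $C^{1,1}$ domains, \cite{GT}). Since $W^{2,p}\hookrightarrow C^{1,1-d/p}$, we get
$$
\|Q_h(z)\|_{X_\Gamma}\le C\|A_p e^{-zA}h\|_{L^p}+C\|e^{-zA}h\|_{L^p}.
$$
By $L^2\to L^p$ ultracontractivity of the analytic semigroup in a subsector $|\arg z|<\theta_0<\pi/2$ (Gaussian/Davies bounds, \cite{O2025}),
$$
\|e^{-zA}\|_{L^2\to L^p}\le C|z|^{-\frac d2(\frac12-\frac1p)},\qquad |z|<R .
$$
Combining this with the analytic smoothing $\|A^{1-\sigma}e^{-zA}\|_{L^2\to L^2}\le C|z|^{-(1-\sigma)_+}$, and splitting $z=z/2+z/2$, gives
$$
\|A e^{-zA}h\|_{L^p}\le \|e^{-\frac z2A}\|_{L^2\to L^p}\,\|A^{1-\sigma}e^{-\frac z2A}\|_{L^2\to L^2}\,\|A^\sigma h\|_2\le C|z|^{-\beta}\|A^\sigma h\|_2,
$$
with $\beta=\frac d4-\frac d{2p}+1-\sigma$. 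The lower-order term $\|e^{-zA}h\|_{L^p}$ is bounded similarly, since $A^{-\sigma}$ is bounded.

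This choice of $\beta$ is too large, because $W^{2,p}$ overshoots $C^1$. To reach $\beta<\frac12$ we should instead use the fractional scale $\Dom(A_p^{\gamma})$ with $\gamma=\frac12+\frac d{2p}+\epsilon'$, which embeds into $C^{1}(\overline\Omega)$ because it lies in the complex interpolation space $[L^p,W^{2,p}\cap W^{1,p}_0]_\gamma\subset W^{2\gamma,p}$ and $2\gamma-d/p>1$. This is the step I expect to be the main technical obstacle. One needs the interpolation identification for $C^{1,1}$ domains with the Dirichlet condition (valid since $2\gamma>1/p$ only adds the boundary trace condition), which comes from bounded imaginary powers of $A_p$. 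With it, the exponent becomes
$$
\beta=\frac d4-\frac d{2p}+\gamma-\sigma=\frac d4+\frac12+\epsilon'-\sigma .
$$
Taking $p$ large and $\epsilon'$ small, $\beta<\frac12$ holds precisely when $\sigma>\frac d4$. If $\beta\le 0$ we simply take any $\beta\in(0,\frac12)$, using $|z|<R$.

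Holomorphy of $Q_h$ follows as in \Cref{boho}. The map $z\mapsto A_p^{\gamma}e^{-zA}h$ is $L^p$-valued holomorphic in the sector, and composing it with the bounded trace operator $\Dom(A_p^\gamma)\to C(\Gamma)$, $v\mapsto\partial_\nu v|_\Gamma$, gives an $X_\Gamma$-valued holomorphic map on $\mathcal S_{\theta_0,R}$. The uniform bound \eqref{secnortr} is then the product of the sectorial estimates above, with constants uniform on $\mathcal S_{\theta_0,R}$ because $\theta_0<\pi/2$ and $R$ is fixed.
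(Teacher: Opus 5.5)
\textbf{There is a genuine gap} in the step that turns the prior $\|A^\sigma h\|_2\le M$ into the exponent $\beta$. The only mechanism you give is the splitting
\[
\|A_p^{\gamma}e^{-zA}A^{-\sigma}\|_{L^2\to L^p}\le \|e^{-\frac{z}{2}A}\|_{L^2\to L^p}\,\|A^{\gamma-\sigma}e^{-\frac{z}{2}A}\|_{L^2\to L^2}.
\]
As your own bound $C|z|^{-(1-\sigma)_+}$ already records, the second factor is only $O(|z|^{-(\gamma-\sigma)_+})$. When $\gamma<\sigma$ it tends to $\lambda_1^{\gamma-\sigma}$ as $z\to0$, so it gives no gain in $|z|$. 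What this chain proves is therefore $\beta=\gamma_p+(\gamma-\sigma)_+$ with $\gamma_p=\frac d2(\frac12-\frac1p)$, not $\gamma_p+\gamma-\sigma$. The same slip occurs in your first display with $\gamma=1$, and in your lower-order term, which only gives $\|e^{-zA}h\|_{L^p}\lesssim|z|^{-\gamma_p}M$.

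In your regime ($p$ large, $\gamma\downarrow\frac12$, $\sigma>\frac d4$) one has $\gamma<\sigma$ as soon as $d\ge3$. You then only get $\beta=\gamma_p\to\frac d4>\frac12$. For $d\ge4$ one has $\gamma_p>\frac d4-\frac12\ge\frac12$ for every $p>d$, so no choice of $p$ or $\gamma$ rescues the splitting. The surplus regularity has to pay for the $L^2\to L^p$ passage itself. A product of an ultracontractivity bound with an $L^2$ spectral bound cannot do that.

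The missing ingredient is the one the paper uses. First prove the real-time estimate $\|\partial_\nu e^{-tA}\|_{L^2\to C(\partial\Omega)}\le Ct^{-r_p}$ with $r_p=1+\gamma_p$, from global $W^{2,p}$ regularity plus $\|Ae^{-tA}\|_{L^2\to L^p}\le Ct^{-1-\gamma_p}$. Then insert $A^{-\sigma}=\Gamma(\sigma)^{-1}\int_0^\infty s^{\sigma-1}e^{-sA}\,ds$. Integrating $s^{\sigma-1}(t+s)^{-r_p}$ gives $\|\partial_\nu e^{-tA}A^{-\sigma}\|_{L^2\to C(\partial\Omega)}\le Ct^{\sigma-r_p}$ for $\sigma<r_p$, a logarithm at equality, and a constant beyond. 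Choosing $p\downarrow d$ gives $r_p\to\frac d4+\frac12$, hence $r_p-\sigma<\frac12$ exactly when $\sigma>\frac d4$. Complex $z$ is handled by writing $e^{-zA}=e^{-s_zA}e^{-(z-s_z)A}$ with $s_z=\frac12\cos\theta_0\,|z|$ and $\|e^{-(z-s_z)A}\|_{L^2\to L^2}\le1$.

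This also makes your detour through $\Dom(A_p^\gamma)$, bounded imaginary powers and the interpolation identification unnecessary. You discarded the $W^{2,p}$ route only because you took $p$ large. With $p$ close to $d$, the loss $1+\gamma_p$ already tends to $\frac d4+\frac12$, the same total as your $\gamma+\gamma_p$ as $\epsilon'\to0$. The same integral representation would also repair your fractional version and produce the exponent $\gamma+\gamma_p-\sigma$ you claimed.
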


\begin{proof}
Choose $p>d$.  Set
$$
\gamma_p
:=
\frac d2\left(\frac12-\frac1p\right),
\qquad
r_p:=1+\gamma_p.
$$
By  $L^2$ to $L^p$ smoothing estimate for the Dirichlet heat
semigroup $e^{-tA}$, one has for $0<t\le T$ that
\begin{equation*}
\|e^{-tA}\|_{L^2\to L^p}
\le
Ct^{-\gamma_p},
\qquad
\|Ae^{-tA}\|_{L^2\to L^p}
\le
Ct^{-1-\gamma_p}.
\end{equation*}
Since $\Omega$ is $C^{1,1}$, the global $W^{2,p}$ estimate for the
Dirichlet Laplacian $A$ yields
$$
\|w\|_{W^{2,p}(\Omega)}
\le
C\bigl(
\|Aw\|_{L^p(\Omega)}
+
\|w\|_{L^p(\Omega)}
\bigr).
$$
Because $p>d$,
$$
W^{2,p}(\Omega)\hookrightarrow C^{1,\alpha}(\overline\Omega),
\qquad
\alpha=1-\frac dp>0.
$$
Therefore,
\begin{equation}
\left\|
\partial_\nu e^{-tA}v
\right\|_{C(\partial\Omega)}
\le
Ct^{-r_p}\|v\|_{L^2(\Omega)},
\qquad
0<t\le T.
\label{norsmoo}
\end{equation}

We now use the spatial regularity of $h$.  Write
$H=A^\sigma h$ and $h=A^{-\sigma}H$.
Since $A\ge\lambda_1>0$,
$$
A^{-\sigma}
=
\frac1{\Gamma(\sigma)}
\int_0^\infty
s^{\sigma-1}e^{-sA}\,ds
$$
in the operator norm on $L^2(\Omega)$.  Hence we have
$$
\partial_\nu e^{-tA}h
=
\frac1{\Gamma(\sigma)}
\int_0^\infty
s^{\sigma-1}
\partial_\nu e^{-(t+s)A}H\,ds.
$$
Using \eqref{norsmoo} for $t+s\le1$, and the
spectral gap for $t+s\ge1$, gives the estimates:
\begin{equation}
\left\|
\partial_\nu e^{-tA}A^{-\sigma}
\right\|_{L^2\to C(\partial\Omega)}
\le
C
\begin{cases}
t^{\sigma-r_p},&\sigma<r_p,\\
1+|\log t|,&\sigma=r_p,\\
1,&\sigma>r_p,
\end{cases}
\qquad 0<t\le T.
\label{fract}
\end{equation}

We observe that
$$
\lim_{p\downarrow d}r_p
=
1+\frac d4-\frac12
=
\frac{d+2}{4}.
$$
Since $\sigma>d/4$, we may choose $p>d$ sufficiently close to
$d$ so that
$r_p-\sigma<\frac12$.
We then choose
\begin{equation*}
\max\{0,r_p-\sigma\}<\beta<\frac12.
\end{equation*}
The logarithmic endpoint in
\eqref{fract} is also dominated by $Ct^{-\beta}$.
Thus
\begin{equation}
\left\|
\partial_\nu e^{-tA}h
\right\|_{C(\partial\Omega)}
\le
Ct^{-\beta}\|A^\sigma h\|_2.
\label{realsec}
\end{equation}

It remains to pass to complex time.  Fix
$\theta_0\in(0,\theta)$, and set
$$
c_{\theta_0}
:=
\frac12\cos\theta_0>0.
$$
For $z\in\mathcal S_{\theta_0,R}$, set
$s_z=c_{\theta_0}|z|$.
Then there holds
$\operatorname{Re}(z-s_z)
\ge
c_{\theta_0}|z|>0$.
Since $A$ is positive self-adjoint, we arrive at
$$
\|e^{-(z-s_z)A}\|_{L^2\to L^2}\le1.
$$
Using the commutation of $A^{-\sigma}$ with the semigroup, we find
$$
\partial_\nu e^{-zA}A^{-\sigma}
=
\partial_\nu e^{-s_zA}A^{-\sigma}
e^{-(z-s_z)A}.
$$
Therefore \eqref{realsec} gives
$$
\left\|
\partial_\nu e^{-zA}A^{-\sigma}
\right\|_{L^2\to C(\partial\Omega)}
\le
C|z|^{-\beta}.
$$
This proves \eqref{secnortr}.  Holomorphy follows from
the analyticity of $e^{-zA}$ and the locally uniform operator bound just
obtained.
\end{proof}

We next apply this estimate to the actually measured quantity.
The following lemma gives the sectorial analyticity of the measured flux.
\begin{lemma}
\label{flux}
Let the assumptions of
Theorem~\ref{deHaus} hold, and let $\beta$ be as
in Lemma~\ref{nortra}.  Define, for
$z\in\mathcal S_{\theta_0,R}$,
\begin{equation}
P_f(z)
:=
g(0)Q_f(z)
+
z\int_0^1
g'\bigl(z(1-r)\bigr)Q_f(rz)\,dr,
\label{compflux}
\end{equation}
where $Q_f$ is defined in \eqref{5.1p}.
Then $P_f$ is $X_\Gamma$-valued holomorphic and
\begin{equation*}
\|P_f(z)\|_{X_\Gamma}
\le
CM|z|^{-\beta},
\qquad
z\in\mathcal S_{\theta_0,R}.
\end{equation*}
For $t\in(0,T)$,
\begin{equation}
P_f(t)
=
\left.
\partial_t\partial_\nu u(t)
\right|_\Gamma.
\label{reide}
\end{equation}
\end{lemma}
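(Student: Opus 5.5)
The plan has three parts: holomorphy of $P_f$, the bound $CM|z|^{-\beta}$, and the identification \eqref{reide}. All three follow from Lemma~\ref{nortra} together with the assumption \eqref{Sg} on $g$.

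\emph{Holomorphy.} For $z\in\mathcal S_{\theta_0,R}$ and $r\in(0,1)$, both $rz$ and $z(1-r)$ lie in $\mathcal S_{\theta_0,R}\subset\mathcal S_{\theta,R}$, since the sector is star-shaped with respect to $0$. Hence $z\mapsto g'(z(1-r))Q_f(rz)$ is $X_\Gamma$-valued holomorphic for each fixed $r$. On compact subsets $K$ of the sector it is bounded by $G\,C\,r^{-\beta}|z|^{-\beta}M$, and this bound is integrable in $r$ because $\beta<1/2<1$. Differentiation under the integral, or equivalently Morera's theorem applied to the Banach-valued integral (Fubini is justified by the integrable majorant), shows that $z\int_0^1 g'(z(1-r))Q_f(rz)\,dr$ is holomorphic. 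The first term $g(0)Q_f(z)$ is holomorphic by Lemma~\ref{nortra}.

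\emph{Bound.} By \eqref{secnortr} and \eqref{Sg},
\[
\|P_f(z)\|_{X_\Gamma}\le |g(0)|\,CM|z|^{-\beta}+|z|\,G\,CM|z|^{-\beta}\int_0^1 r^{-\beta}\,dr
\le CM|z|^{-\beta}\bigl(1+R\,G/(1-\beta)\bigr),
\]
using $|z|<R$. Here $|g(0)|\le G$ by continuity of $g$ at $0$.

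\emph{Identification for real $t$.} Substituting $s=rt$ gives
\[
P_f(t)=g(0)\partial_\nu v(t)+\int_0^t g'(t-s)\partial_\nu v(s)\,ds, \qquad v(s)=e^{-sA}f.
\]
It then remains to show that this equals $\partial_t\partial_\nu u(t)|_\Gamma$, where $u(t)=\int_0^t g(t-s)v(s)\,ds$ by Duhamel. I would use the identity $u(t)=\int_0^t g(s)v(t-s)\,ds$ and apply $\partial_\nu|_\Gamma$, which is valid because $\|\partial_\nu v(s)\|_{X_\Gamma}\le Cs^{-\beta}M$ is integrable near $0$. This shows that $\partial_\nu u(t)=\int_0^t g(t-s)Q_f(s)\,ds$ holds in $X_\Gamma$. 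One then differentiates this Banach-valued convolution in $t$. Since $g\in C^1[0,T]$ and $Q_f\in L^1(0,T;X_\Gamma)$, we have
\[
\frac{d}{dt}\int_0^t g(t-s)Q_f(s)\,ds=g(0)Q_f(t)+\int_0^t g'(t-s)Q_f(s)\,ds
\]
for a.e.\ $t$. For every $t>0$ the equality also holds, because the right-hand side is continuous on $(0,T)$: $Q_f$ is continuous there and integrable at $0$.

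\emph{Main obstacle.} The main difficulty is this last justification: interchanging $\partial_\nu$ with the time integral near $s=0$, where $Q_f$ is singular. I would handle it by first proving the identity for the truncated integrals $\int_\epsilon^t$, where everything is smooth, and then letting $\epsilon\to0$ using the integrable bound $s^{-\beta}$, which is square-integrable because $\beta<1/2$. This gives convergence in $L^2(0,T;X_\Gamma)$ and identifies the distributional derivative with $P_f$.
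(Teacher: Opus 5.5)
Your proposal is correct and follows the paper's proof essentially step for step. Holomorphy comes from differentiating under the integral sign with the integrable majorant $r^{-\beta}$ from Lemma~\ref{nortra}, the bound uses $|z|<R$, and \eqref{reide} follows from Duhamel's formula, local integrability of $Q_f$ at $s=0$, differentiation of the convolution in $t$, and the substitution $s=rt$. Your truncation at $s=\epsilon$ is a more careful version of the interchange of $\partial_\nu$ with the time integral, which the paper asserts in one line; to close it you also need $u_\epsilon(t)\to u(t)$ in $W^{2,p}$ for some $p>d$, which holds because $\sigma>d/4$. Also, strictly, \eqref{Sg} gives only $g\in W^{1,\infty}(0,T)$ rather than $C^1[0,T]$, but that is all this step needs.
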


\begin{proof}
By Lemma~\ref{nortra},
$$
\|Q_f(rz)\|_{X_\Gamma}
\le
CM(r|z|)^{-\beta}.
$$
Since $\beta<1$,
$$
\int_0^1r^{-\beta}\,dr<\infty.
$$
Thus the Bochner integral in
\eqref{compflux} is well defined.  On every compact subset
of the sector its integrand is dominated by an $L^1(0,1)$ function
independent of $z$.  Standard differentiation under the integral sign
therefore proves holomorphy.

Furthermore, we have
$$
\begin{aligned}
\|P_f(z)\|_{X_\Gamma}
&\le
|g(0)|\,CM|z|^{-\beta}
+
CGM|z|
\int_0^1(r|z|)^{-\beta}\,dr
\\
&\le
CM\bigl(
|z|^{-\beta}+|z|^{1-\beta}
\bigr)
\le
CM|z|^{-\beta},
\end{aligned}
$$
because $|z|<R$.

For real $t>0$, Duhamel's formula can be written as
$$
u(t)
=
\int_0^t
g(t-s)e^{-sA}f\,ds.
$$
Since $Q_f(s)=\partial_\nu e^{-sA}f|_\Gamma$ is locally integrable near
$0$ by $\beta<1$, differentiation in $t$
gives
$$
\partial_t\partial_\nu u(t)
=
g(0)Q_f(t)
+
\int_0^t
g'(t-s)Q_f(s)\,ds.
$$
The change of variables $s=rt$ gives precisely
\eqref{reide}.
\end{proof}

\subsubsection*{Step 2. Quantitative delayed time to full time analytic continuation}

We now prove the analytic lemma which yields  the additional logarithm.

\begin{lemma}
\label{fullsector}
Let $X$ be a complex Banach space, let
$0<\tau<T<R$, and let
$
0\le\beta<\frac12.
$
Suppose that
$$
F:\mathcal S_{\theta_0,R}\longrightarrow X
$$
is holomorphic and
\begin{equation}
\|F(z)\|_X
\le
K|z|^{-\beta},
\qquad
z\in\mathcal S_{\theta_0,R}.
\label{F}
\end{equation}
Set
\begin{equation}\label{Varp}
\varepsilon_\tau
:=
\|F\|_{L^2(\tau,T;X)}.
\end{equation}
Then there exist constants $C,D,\chi>0$, depending only on $
\theta_0,R,\tau,T,\beta$, such that
\begin{equation}
\|F\|_{L^2(0,T;X)}
\le
CK
\left[
\log\left(
e+\frac{DK}{\varepsilon_\tau}
\right)
\right]^{-\chi}.
\label{delsec}
\end{equation}
If $\varepsilon_\tau=0$, then $F\equiv0$.
\end{lemma}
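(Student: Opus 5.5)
The estimate will be obtained by splitting $(0,T)$ at a small $\epsilon\in(0,\tau)$. Near the origin we use only the a priori bound \eqref{F}. Away from the origin we use a quantitative analytic continuation from $(\tau,T)$ to $[\epsilon,T]$ inside the sector. Since $\beta<1/2$, \eqref{F} gives
$$\|F\|_{L^2(0,\epsilon;X)}^2\le K^2\int_0^\epsilon t^{-2\beta}\,dt=\frac{K^2\epsilon^{1-2\beta}}{1-2\beta}.$$
It therefore remains to bound $\sup_{t\in[\epsilon,T]}\|F(t)\|_X$ by something like $K\,(\varepsilon_\tau/K)^{\omega(\epsilon)}$ with an explicit exponent $\omega(\epsilon)$, and then optimise in $\epsilon$.

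To reduce to scalar functions, fix a functional $\ell\in X^*$ with $\|\ell\|\le1$ and set $f_\ell=\ell\circ F$. This function is holomorphic on $\mathcal S_{\theta_0,R}$ and satisfies $|f_\ell(z)|\le K|z|^{-\beta}$. Moreover $\|f_\ell\|_{L^2(\tau,T)}\le\varepsilon_\tau$.

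\emph{Step 1: a slightly smaller sector.} Pass to the subsector $\mathcal S_{\theta_0/2,R'}$ with $T<R'<R$, and restrict to $|z|\ge\epsilon/2$. There $|f_\ell|\le K(\epsilon/2)^{-\beta}$.

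\emph{Step 2: smallness at one real point.} First convert the $L^2(\tau,T)$ smallness into pointwise smallness on a small disc around an interior point $t_0\in(\tau,T)$. This is done exactly as in \Cref{anpro}, using Vessella's propagation estimate followed by the two-constants theorem, and it gives $\sup_{D(t_0,\rho)}|f_\ell|\le CK\epsilon^{-\beta}(\varepsilon_\tau/K)^{\vartheta}$.

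\emph{Step 3: propagation toward the origin.} Next propagate from $D(t_0,\rho)$ to the points $t\in[\epsilon,T]$. For this I would use the two-constants theorem, or Hadamard three-circles, after the conformal change $w=\log z$. This map sends the truncated sector to the strip $\{|\Im w|<\theta_0/2,\ \Re w<\log R'\}$. The image of the disc $D(t_0,\rho)$ is a fixed disc, and the point $\log t$ lies at distance about $|\log\epsilon|$ from it. The harmonic measure of a fixed disc, seen from a point at distance $L$ in a strip of width $\theta_0$, decays like $e^{-\pi L/\theta_0}$. Hence
$$|f_\ell(t)|\le CK\epsilon^{-\beta}\Bigl(\frac{\varepsilon_\tau}{K}\Bigr)^{c\,\epsilon^{\kappa}},\qquad \kappa=\frac{\pi}{\theta_0}.$$
Because the log-strip is cut off at $\Re w=\log(\epsilon/2)$, the harmonic-measure bound has to be applied in this truncated domain, where the a priori bound $K(\epsilon/2)^{-\beta}$ holds. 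The resulting cost is polynomial in $\epsilon$ and is harmless. Taking the supremum over $\ell$ gives the same bound for $\|F(t)\|_X$.

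\emph{Step 4: optimisation.} Combining the two pieces,
$$\|F\|_{L^2(0,T;X)}\le CK\Bigl[\epsilon^{1/2-\beta}+\epsilon^{-\beta}\Bigl(\frac{\varepsilon_\tau}{K}\Bigr)^{c\epsilon^{\kappa}}\Bigr].$$
Write $L=\log(DK/\varepsilon_\tau)$ and choose $\epsilon=(\mu\log L/L)^{1/\kappa}$ with $\mu$ large. Then $e^{-c\epsilon^\kappa L}=L^{-c\mu}$, which dominates $\epsilon^{-\beta}$. This yields \eqref{delsec} with any $\chi<(1/2-\beta)/\kappa$, adjusting constants to absorb the $\log L$ factor. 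When $\varepsilon_\tau\ge K/D$ the estimate is trivial from \eqref{F}. When $\varepsilon_\tau=0$, $F$ vanishes on $(\tau,T)$, so $F\equiv0$ by the identity theorem on the connected sector.

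\textbf{Main obstacle.} The hardest step is Step 3: obtaining a clean, explicit harmonic-measure lower bound of order $\epsilon^{\kappa}$ in the sector while only the singular bound $|z|^{-\beta}$ is available. The cutoff at $|z|=\epsilon/2$ is needed to keep that bound finite, and the harmonic-measure estimate must be made to work in the truncated domain.
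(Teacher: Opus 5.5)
Your proof is correct. It shares the paper's overall architecture: local smallness near $[\tau,T]$ from \Cref{anpro}, then propagation of smallness toward the vertex with an exponent decaying like a power of $t$, then integrability of $t^{-2\beta}$ for $\beta<1/2$. The propagation step, however, is done differently.

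The paper first normalizes $G=z^\beta F/K$, which is bounded by $1$ on the whole sector, so no cutoff near $z=0$ is needed. It then iterates a three-disc inequality along the chain $D(\rho^n t_0,\kappa\rho^n t_0)$ to get $\|G(t)\|_X\le C_2\varepsilon^{c_1t^a}$, with a non-explicit rate $a=\log\vartheta_1/\log\rho$. Integrating $t^{-2\beta}e^{-2c_1Lt^a}$ directly gives $\chi=(1-2\beta)/(2a)$ with no logarithmic loss.

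You replace the chain by a single two-constants estimate in $w=\log z$. This gives the explicit rate $\kappa=\pi/\theta_0$, fixed by the opening of the sector, and avoids compounding constants through an iteration. The price is the cutoff at $|z|=\epsilon/2$ and the split-and-optimise step, which loses a factor $\log L$ and yields only $\chi<(1/2-\beta)\theta_0/\pi$.

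The step you flag as the main obstacle is routine. Take $x_1$ just to the left of the image of $D(t_0,\rho)$. By the maximum principle, on the rectangle $\{\log(\epsilon/2)<\operatorname{Re}w<x_1,\ |\operatorname{Im}w|<\theta_0/2\}$ the harmonic measure dominates
$c\,\sinh(\pi(\operatorname{Re}w-\log(\epsilon/2))/\theta_0)\cos(\pi\operatorname{Im}w/\theta_0)/\sinh(\pi(x_1-\log(\epsilon/2))/\theta_0)$.
Here $c>0$ is uniform in $\epsilon$, by domain monotonicity and Hopf's lemma on the segment $\operatorname{Re}w=x_1$. Since $\log t$ stays at distance at least $\log 2$ from the cut, this gives $\omega\ge c\,\epsilon^{\pi/\theta_0}$ for $\epsilon\le t\le e^{x_1}$. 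The remaining $t\le T$ lie in a fixed compact region where $\omega$ is bounded below uniformly in $\epsilon$.

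Finally, you can combine your harmonic-measure bound with the paper's normalization $z^\beta F/K$. Then you work in the full half-strip with no cutoff and integrate $t^{-2\beta}e^{-cLt^{\kappa}}$ directly. This gives $\chi=(1/2-\beta)\theta_0/\pi$ without the $\log L$ loss.
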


\begin{proof}
The case $\varepsilon_\tau=0$ follows first.  Since $F$ is continuous
on $(\tau,T)$, it vanishes there.  For every $\ell\in X^*$,
$\ell\circ F$is a scalar holomorphic function which vanishes on a real
interval. Hence $\ell(F(z))=0$ throughout the sector.  By Hahn--Banach theorem,
$F\equiv0$.

We henceforth assume $\varepsilon_\tau>0$.  Choose the principal branch
of $z^\beta$ in the sector and define
$$
G(z):=\frac{z^\beta F(z)}{K}.
$$
After multiplying $K$ by an inessential fixed constant, we may assume
\begin{equation}
\sup_{\mathcal S_{\theta_0,R}}\|G(z)\|_X\le1.
\label{G}
\end{equation}
Moreover,
\begin{equation}
\|G\|_{L^2(\tau,T;X)}
\le
T^\beta\frac{\varepsilon_\tau}{K}
=:
\varepsilon.
\label{Gsmall}
\end{equation}

If $\varepsilon$ is bounded from below by a fixed positive constant,
then \eqref{delsec} follows directly from
\eqref{F}, since for $\beta<1/2$,
$$
\int_0^Tt^{-2\beta}\,dt<\infty.
$$
We may therefore assume
$$
0<\varepsilon<\varepsilon_0
$$
with $\varepsilon_0>0$ fixed and sufficiently small.

Let
$$
t_0:=\frac{\tau+T}{2}.
$$
Since the compact interval $[\tau,T]$ lies a positive distance from the
boundary of the sector $\mathcal{S}_{\theta_0,R}$,
using Lemma~\ref{anpro}, applied after scalarization by
elements of $X^*$, we have there exist a disk
$$
D_0:=D(t_0,\rho_0)
\Subset\mathcal S_{\theta_0,R}
$$
and constants $C_0>0$, $\vartheta_0\in(0,1)$ such that
\begin{equation}
\sup_{z\in D_0}\|G(z)\|_X
\le
C_0\varepsilon^{\vartheta_0}.
\label{first}
\end{equation}
Indeed, the scalar estimate is uniform over all
$\ell\in X^*$ with $\|\ell\|_{X^*}\le1$, and Hahn--Banach then yields \eqref{first}.

We next propagate \eqref{first} toward the vertex
$z=0$.  Choose $\kappa>0$ sufficiently small, depending only on
$\theta_0$, and choose $\rho\in(0,1)$ so close to $1$ that
$$
1-\rho<\frac{\kappa}{4}.
$$
After decreasing $\rho_0$, if necessary, we may assume that
$$
D(t_0,\kappa t_0)\subset D_0.
$$
Set
$$
t_n:=\rho^nt_0,
\qquad
D_n:=D(t_n,\kappa t_n),
\qquad n\ge0.
$$
All the disks $D_n$ are contained in
$\mathcal S_{\theta_0,R}$.

There exist constants
$$
C_1>0,\qquad \vartheta_1\in(0,1),
$$
independent of $n$, such that
\begin{equation}
\sup_{D_{n+1}}\|G\|_X
\le
C_1
\left(
\sup_{D_n}\|G\|_X
\right)^{\vartheta_1}.
\label{chain}
\end{equation}
To see this, rescale $z=t_n\zeta$.  The pair of disks
$$
D(1,\kappa),
\qquad
D(\rho,\kappa\rho)
$$
and a fixed complex neighborhood containing them are independent of
$n$.  Applying Lemma~\ref{anpro} on
this fixed geometry to
$\ell(G(t_n\cdot))$, using
\eqref{G} as $M$, and then taking the supremum
over $\ell$ in the unit ball of $X^*$, we obtain
\eqref{chain}.

Iterating \eqref{chain} and using
\eqref{first}, we obtain
\begin{equation}
\sup_{D_n}\|G\|_X
\le
C_2
\varepsilon^{c_0\vartheta_1^n}
\label{itedisma}
\end{equation}
with constants $C_2,c_0>0$ independent of $n$.

Define
$$
a:=\frac{\log\vartheta_1}{\log\rho}>0.
$$
Since
$$
\vartheta_1^n
=
(\rho^n)^a
=
\left(\frac{t_n}{t_0}\right)^a,
$$
and since $1-\rho<\kappa/4$, every
$t\in[t_{n+1},t_n]$ belongs to $D_n$.  Therefore
\eqref{itedisma} implies
\begin{equation}
\|G(t)\|_X
\le
C_2
\varepsilon^{c_1t^a},
\qquad
0<t<\tau,
\label{bacan}
\end{equation}
where $c_1>0$ depends only on the fixed geometry.

Let
\begin{align}\label{5.6g}
L:=\log\frac1\varepsilon.
\end{align}
For $\varepsilon$ sufficiently small, $L\ge1$,
\eqref{bacan} yields
$$
\|F(t)\|_X
\le
CKt^{-\beta}
e^{-c_1Lt^a},
\qquad 0<t<\tau.
$$
Therefore,
\begin{align*}
\|F\|_{L^2(0,\tau;X)}^2
&\le
CK^2
\int_0^\tau
t^{-2\beta}e^{-2c_1Lt^a}\,dt
\nonumber\\
&\le
CK^2
L^{-(1-2\beta)/a}.
\end{align*}
Indeed, the change of variables
$
s=2c_1Lt^a
$
gives
$$
\int_0^\tau
t^{-2\beta}e^{-2c_1Lt^a}\,dt
\le
C
L^{-(1-2\beta)/a}
\int_0^\infty
s^{\frac{1-2\beta}{a}-1}e^{-s}\,ds,
$$
and the last integral is finite precisely because
$\beta<1/2$.

Thus, with
$$
\chi:=\frac{1-2\beta}{2a}>0,
$$
we have
\begin{equation}
\|F\|_{L^2(0,\tau;X)}
\le
CKL^{-\chi}.
\label{2Log}
\end{equation}
Recall $\varepsilon_\tau$ defined  in \eqref{Varp}, $L$ in \eqref{5.6g}), $\varepsilon$ in \eqref{Gsmall}.
Since $e^{-L}\le C_\chi L^{-\chi}$ for $L\ge1$,   by \eqref{2Log}, one obtains
\eqref{delsec}.
\end{proof}

\subsubsection*{Step 3. Volterra inversion and the second logarithm}

We now complete the proof of
Theorem~\ref{deHaus}.

\begin{proof}[Proof of Theorem~\ref{deHaus}]
Let
$$
P(t)
:=
\left.
\partial_t\partial_\nu u(t)
\right|_\Gamma.
$$
By Lemma~\ref{flux}, $P$ is the restriction to the
positive real axis of an $X_\Gamma$-valued holomorphic function satisfying
$$
\|P(z)\|_{X_\Gamma}
\le
CM|z|^{-\beta}
$$
for some $\beta<1/2$.  Applying
Lemma~\ref{fullsector} with
$$
K=CM,
\qquad
\varepsilon_\tau=\delta_\tau,
$$
we obtain constants $C,D,\chi>0$ such that
\begin{equation}
\|P\|_{L^2(0,T;X_\Gamma)}
\le
CM
\left[
\log\left(
e+\frac{DM}{\delta_\tau}
\right)
\right]^{-\chi}.
\label{fullP}
\end{equation}
Set
$$
v(t):=e^{-tA}f,
\qquad
Q(t):=
\left.
\partial_\nu v(t)
\right|_\Gamma.
$$
Lemma~\ref{nortra} and $\beta<1/2$ imply
$$
Q\in L^2(0,T;X_\Gamma).
$$
For real $t\in(0,T)$, the identity
\eqref{reide} is
\begin{equation}
P(t)
=
g(0)Q(t)
+
\int_0^t
g'(t-s)Q(s)\,ds.
\label{delavo}
\end{equation}
Lemma ~\ref{lem:volterra} therefore gives
\begin{equation}
\|Q\|_{L^2(0,T;X_\Gamma)}
\le
C_g
\|P\|_{L^2(0,T;X_\Gamma)}.
\label{Qcontrol}
\end{equation}
Combining
\eqref{fullP} and
\eqref{Qcontrol}, one has
\begin{equation}
\|Q\|_{L^2(0,T;X_\Gamma)}
\le
CM
\left[
\log\left(
e+\frac{DM}{\delta_\tau}
\right)
\right]^{-\chi}.
\label{Qlog}
\end{equation}

Choose once and for all
$0<t_1<t_*<t_2<T.$ Proposition~\ref{potiin}  gives
\begin{equation*}
\|e^{-t_*A}f\|_{L^2(\Omega)}
\le
C
\|f\|_{L^2(\Omega)}^{1-\vartheta}
\|Q\|_{L^2(t_1,t_2;X_\Gamma)}^\vartheta
\end{equation*}
for some fixed $\vartheta\in(0,1)$.

Since $A\ge\lambda_1>0$,
$$
\|f\|_{L^2(\Omega)}
\le
\lambda_1^{-\sigma}
\|A^\sigma f\|_{L^2(\Omega)}
\le
\lambda_1^{-\sigma}M.
$$
Hence  from \eqref{Qlog}, there holds
\begin{equation}
\|e^{-t_*A}f\|_{L^2(\Omega)}
\le
CM
\left[
\log\left(
e+\frac{DM}{\delta_\tau}
\right)
\right]^{-\kappa},
\qquad
\kappa:=\chi\vartheta>0.
\label{log}
\end{equation}
Finally, by
Lemma~\ref{lemlog}, one obtains
\begin{equation}
\|f\|_{L^2(\Omega)}
\le
CM
\left[
\log\left(
e+
\frac{M}{
\|e^{-t_*A}f\|_{L^2(\Omega)}
}
\right)
\right]^{-\sigma}.
\label{debackla}
\end{equation}
Let
$$
L_\tau
:=
\log\left(
e+\frac{DM}{\delta_\tau}
\right).
$$
If $L_\tau$ is sufficiently large,
\eqref{log} gives
$$
\frac{M}{\|e^{-t_*A}f\|_2}
\ge
cL_\tau^\kappa,
$$
and hence
$$
\log\left(
e+
\frac{M}{\|e^{-t_*A}f\|_2}
\right)
\ge
c_1
\log(e+L_\tau).
$$
Substitution into \eqref{debackla} gives
$$
\|f\|_2
\le
CM
\left[
\log\left(
e+
\log\left(
e+\frac{DM}{\delta_\tau}
\right)
\right)
\right]^{-\sigma}.
$$
If $L_\tau$ remains bounded, the same conclusion follows after increasing
$C$, using the a priori estimate
$$
\|f\|_2\le\lambda_1^{-\sigma}M.
$$
This proves \eqref{dedoub}.

If $\delta_\tau=0$, then
$P(t)=0$ on $(\tau,T)$.  By analyticity,
$P\equiv0$ in the sector.  Equation
\eqref{delavo} and uniqueness for the Volterra equation imply
$Q\equiv0$ on $(0,T)$.  Proposition
\ref{potiin} then gives
$e^{-t_*A}f=0$, and the injectivity of the heat semigroup yields
$f=0$.
\end{proof}

\section{Proofs of the main interior observation results}\label{proof}
\subsection{Proof of Theorem~\ref{thmgene} }
The key idea is to employ the Duhamel principle to link the inverse source problem with the observability inequality established in Green et al. \cite{green2024observability}. Indeed, for sources of separable form $g(t)f(x)$, the solution $u(x,t)$ of equation~\eqref{mainpde} can be expressed as
\begin{equation}\label{eq-duha}
u(x,t) = \int_0^t g(t-s)\, v(x,s)ds,
\end{equation}
where $v$ satisfies the homogeneous heat equation
$$
\begin{cases}
\partial_t v - \Delta v = 0, & (x,t)\in\Omega_T,\\
v(x,0)=f(x), & x\in\Omega,\\
v(x,t)=0, & (x,t)\in\partial\Omega\times(0,T).
\end{cases}
$$
Now we are ready to give the proof of Theorem \ref{thmgene} .
\begin{proof}[\bf Proof of Theorem \ref{thmgene} ]
Differentiating \eqref{eq-duha} with respect to $t$ gives
$$
\partial_t u(x,t) = g(0)v(x,t) + \int_0^t g'(t-s) v(x,s)\,ds.
$$
From this one derives a pointwise estimate
$$
|v(x,t)| \le C\bigl|\partial_t u(x,t)\bigr| + C\int_0^t \bigl|v(x,s)\bigr| e^{t-s}\,ds.
$$
by the Gronwall inequality, we further obtain 
\begin{equation}\label{qgronwall}
\int_0^T\sup_{x\in\omega}|v(x,t)|\,dt
\leq
C_g
\int_0^T
\sup_{x\in\omega}|\partial_tu(x,t)|\,d t.
\end{equation}

Applying the observability inequality Theorem 1.3 from~\cite{green2024observability}, that is,
$$
\|v(\cdot,T)\|_{L^2(\Omega)} \le C e^{C/T} \int_0^T \sup_{x\in\omega}|v(x,t)|\,dt,
$$
and combining with the previous bound \eqref{qgronwall}, we obtain
\begin{equation}\label{qvT-final}
\|v(\cdot,T)\|_{L^2(\Omega)} \le C_T \int_0^T \sup_{x\in\omega} \bigl|\partial_t u(x,t)\bigr|\,dt.
\end{equation}

Next, it remains to estimate $\|f\|_{L^2(\Omega)}$ in terms of
$\|v(\cdot,T)\|_{L^2(\Omega)}$. Without loss of generality, we assume that $\| v(T) \|_{L^2(\Omega)} < \| f \|_{H_0^1(\Omega)} \le M$; otherwise, the Poincaré inequality directly yields a Lipschitz-type
stability estimate. Applying the standard conditional
logarithmic stability estimate for the backward heat equation
\cite[Eq.~(7.36), p.~134, with $\alpha=2$ and $s=1$]{bal2012introduction},
we obtain
$$\|f\|_{L^2(\Omega)}
\leq C M
\left[
\log\left(
\frac{M}{\|v(\cdot,T)\|_{L^2(\Omega)}}
\right)
\right]^{-\frac12},
$$
Combining this inequality with estimate~\eqref{qvT-final} completes
the proof of Theorem~\ref{thmgene}.

\end{proof}

\subsection{Proof of Theorem~\ref{ganal}}
Before giving the proof of the main theorem for analytic sources, we need a bound on the high-frequency truncation of the observation data.
\begin{lemma}\label{lem:hf}
Let $u$ solve the heat equation \eqref{mainpde} with source $g(t)f(x)$, where $g\in L^1(0,T)$ and $f\in G^{1,\rho}(\Omega)$. Then for any $\rho_1\in(0,\rho)$ and integer $N\ge1$, there exists a constant $C>0$ depending only on  $d,\rho-\rho_1$ such that
\begin{align}\label{Hubei}
\|(I-\Pi_{\lambda_N})u(\cdot,T)\|_{L^\infty(\Omega)}
\le C\, e^{-\rho_1\sqrt{\lambda_{N+1}}} \|f\|_{G^{1,\rho}}\|g\|_{L^1(0,T)},
\end{align}
where $\Pi_{\lambda_N}$ is the projection onto the first $N$ eigenmodes.
\end{lemma}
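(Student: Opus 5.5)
The plan is to expand $u(\cdot,T)$ in the Dirichlet eigenbasis and reduce the $L^\infty$ bound to a weighted sum of eigenfunction sup-norms. By Duhamel's formula,
$$
u(\cdot,T)=\sum_{k\ge1} b_k(T) f_k\phi_k, \qquad b_k(T)=\int_0^T g(s)e^{-\lambda_k(T-s)}\,ds,
$$
so that
$$
(I-\Pi_{\lambda_N})u(\cdot,T)=\sum_{k\ge N+1} b_k(T)f_k\phi_k .
$$
Since $e^{-\lambda_k(T-s)}\le1$, we have $|b_k(T)|\le\|g\|_{L^1(0,T)}$ uniformly in $k$. The triangle inequality then gives
$$
\|(I-\Pi_{\lambda_N})u(\cdot,T)\|_{L^\infty}\le \|g\|_{L^1}\sum_{k\ge N+1}|f_k|\,\|\phi_k\|_{L^\infty(\Omega)}.
$$

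The key input is a polynomial bound for the sup-norm of eigenfunctions, $\|\phi_k\|_{L^\infty(\Omega)}\le C\lambda_k^{d/4}$. This follows from the $L^2\to L^\infty$ ultracontractivity of the Dirichlet heat semigroup, $\|e^{-tA}\|_{L^2\to L^\infty}\le Ct^{-d/4}$ for $0<t\le1$. Applying it with $t=\lambda_k^{-1}$ and using $\phi_k=e^{\lambda_k t}e^{-tA}\phi_k$ gives the bound, and it holds on any bounded domain by domination by the free heat kernel. I will also use Weyl's law, $\lambda_k\ge c k^{2/d}$, which again holds for arbitrary bounded domains.

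Next write $|f_k|=e^{-\rho\sqrt{\lambda_k}}\cdot e^{\rho\sqrt{\lambda_k}}|f_k|$ and apply Cauchy--Schwarz:
$$
\sum_{k\ge N+1}|f_k|\lambda_k^{d/4}\le \|f\|_{G^{1,\rho}}\Big(\sum_{k\ge N+1}\lambda_k^{d/2}e^{-2\rho\sqrt{\lambda_k}}\Big)^{1/2}.
$$
Split $e^{-2\rho\sqrt{\lambda_k}}=e^{-2\rho_1\sqrt{\lambda_k}}\,e^{-2(\rho-\rho_1)\sqrt{\lambda_k}}$. Because the $\lambda_k$ are nondecreasing, the first factor is at most $e^{-2\rho_1\sqrt{\lambda_{N+1}}}$ for every $k\ge N+1$, and pulling it out produces the desired prefactor $e^{-\rho_1\sqrt{\lambda_{N+1}}}$. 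What remains is to show that
$$
\sum_{k\ge1}\lambda_k^{d/2}e^{-2(\rho-\rho_1)\sqrt{\lambda_k}}
$$
is finite and bounded in terms of $d$ and $\rho-\rho_1$ (and the domain).

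The main obstacle is this last summability bound, which needs Weyl-type counting. Using $\lambda_k\ge ck^{2/d}$ together with the fact that $x\mapsto x^{d/2}e^{-2\delta\sqrt x}$ is eventually decreasing (with $\delta=\rho-\rho_1$), I will compare the sum to $\sum_k k\,e^{-c'\delta k^{1/d}}$ up to a bounded initial segment. This series converges, with a bound depending on $d$, $\delta$ and, through the Weyl constant, on $\Omega$. The stated dependence "only on $d,\rho-\rho_1$" will therefore implicitly include $\Omega$ through the constants in Weyl's law and ultracontractivity; I will note this in the proof.
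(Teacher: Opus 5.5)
Your proof is correct, but it takes a different route from the paper's. The paper keeps Duhamel in semigroup form, $u(\cdot,T)=\int_0^T e^{(T-s)\Delta}(g(s)f)\,ds$, and uses $L^\infty$-contractivity of the Dirichlet heat semigroup to reduce to $\|g\|_{L^1(0,T)}\|(I-\Pi_{\lambda_N})f\|_{L^\infty(\Omega)}$. It then applies the Sobolev embedding $H^{k_d}(\Omega)\hookrightarrow L^\infty(\Omega)$, $k_d=[d/2]+1$, and elliptic regularity, which identifies the $H^{k_d}$ norm with the spectral norm $\sum_j(1+\lambda_j^{k_d})|f_j|^2$. The tail is closed by the elementary bound $(1+\lambda^{k_d})e^{-2\rho\sqrt{\lambda}}\le Ce^{-2\rho_1\sqrt{\lambda}}$ and monotonicity of the $\lambda_j$.

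Because Parseval is applied in a Hilbert norm before passing to $L^\infty$, the paper never sums sup norms over modes and needs no eigenvalue counting. The price is elliptic regularity of order $k_d$ up to the boundary, which is more than $C^{1,1}$ once $d\ge4$. Its embedding and regularity constants also depend on $\Omega$, despite the stated dependence on $d,\rho-\rho_1$ only.

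Your route uses $\|\phi_k\|_{L^\infty}\le C\lambda_k^{d/4}$ from Gaussian domination, which holds for every $t>0$, so the restriction $t\le1$ is unnecessary. Together with $\lambda_k\ge c(k/|\Omega|)^{2/d}$, it needs no boundary regularity at all, at the cost of Weyl-type counting and an explicit $|\Omega|$-dependence.

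You can remove that dependence, and obtain exactly the stated one, by applying Cauchy--Schwarz pointwise in $x$ rather than after the triangle inequality:
\[
\Bigl|\sum_{k>N}b_k(T)f_k\phi_k(x)\Bigr|\le\|g\|_{L^1(0,T)}\|f\|_{G^{1,\rho}}\,e^{-\rho_1\sqrt{\lambda_{N+1}}}\Bigl(\sum_{k\ge1}e^{-2(\rho-\rho_1)\sqrt{\lambda_k}}\phi_k(x)^2\Bigr)^{1/2}.
\]
The last sum is the diagonal of the kernel of $e^{-2(\rho-\rho_1)\sqrt{A}}$. Write $e^{-a\sqrt{\lambda}}=\int_0^\infty\frac{a}{2\sqrt{\pi}}s^{-3/2}e^{-a^2/(4s)}e^{-s\lambda}\,ds$ and use $\sum_k e^{-s\lambda_k}\phi_k(x)^2\le(4\pi s)^{-d/2}$; this bounds the sum by $C_d(\rho-\rho_1)^{-d}$. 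Hence the lemma holds with $C=C_d(\rho-\rho_1)^{-d/2}$ on any bounded open set, with no Weyl law needed.
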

\begin{proof}
The proof follows from standard Sobolev embedding and the decay of Gevrey coefficients. Indeed, using the representation
$$
u(x,T) =\int^T_0 e^{(T-s)\Delta}(g(s)f(x))ds,
$$
we obtain by the Sobolev embedding $H^{k_d}(\Omega)\subset\subset L^\infty(\Omega)$ with $k_d:=[\frac{d}{2}] + 1$ and the fact $\|e^{t\Delta}\|_{L^{\infty}(\Omega)\to L^{\infty}(\Omega)}\le C_d$ that
\begin{align*}
\|(I-\Pi_{\lambda_N})u(\cdot,T)\|_{L^\infty(\Omega)}
\le& C_d\int_0^T \|(I-\Pi_{\lambda_N})(g(s)f)\|_{L^\infty(\Omega)}\,ds
\\
\le& C\int_0^T \|(I-\Pi_{\lambda_N})(g(s)f)\|_{H^{k_d}(\Omega)}\,ds
\\
\le& C\|g\|_{L^1(0,T)}\|(I-\Pi_{\lambda_N})f\|_{H^{k_d}(\Omega)}.
\end{align*}
Recall that
$$\|h\|_{H^{k}}\lesssim \|(I-\Delta)^{\frac{k}{2}}h\|_{L^2}, \mbox{ }\forall \ h\in D\left((-\Delta)^{[\frac{k}{2}]}\right),$$
where
$$
D\left((-\Delta)^{[\frac{k}{2}]}\right):=
\left\{h\in H^k(\Omega): \Delta^j f|_{\partial \Omega}=0, \, j=0,...,[\frac{k}{2}]\right\}.
$$
Note that $f\in G^{1,\rho}(\Omega)$ implies
$$
(-\Delta)^{m}f=\sum^{\infty}_{j=0}f_j\lambda^m_j \phi_j(x),
$$
where $\phi_j\in D((-\Delta)^{\ell})$ for all $\ell\in \mathbb N$, since $\phi_j$ is an eigenfunction to $-\Delta$ with Dirichlet boundary condition.
Thus $f\in D((-\Delta)^\ell)$ for any $\ell\in \mathbb N$. And by the Parseval identity, one gets for any $\rho_1\in(0,\rho)$ that there exists a constant $C>0$ such that
\begin{align*}
\|(I-\Pi_{\lambda_N})f\|^2_{H^{k_d}(\Omega)}\le C\sum^{\infty}_{j> N} (1+\lambda^{k_d}_j)|f_j|^2\le C  e^{-2\rho_1\sqrt{\lambda_{N+1}}}\sum^{\infty}_{j> N}e^{2\rho\sqrt{\lambda_j}}  |f_j|^2,
\end{align*}
which finishes the proof of  Lemma \ref{lem:hf}.
\end{proof}

Now we are ready to give the proof of Theorem \ref{ganal}.
\begin{proof}[\bf Proof of Theorem \ref{ganal}]
Let
$$
b_k(T)
:=
\int_0^T g(s)e^{-\lambda_k(T-s)}\,ds.
$$
Then we have the identity
$$
u(\cdot,T)
=
\sum_{k=1}^\infty b_k(T)f_k\phi_k.
$$
By assumption \eqref{3cog},
\begin{equation}\label{qbk-lowern}
|b_k(T)|
\ge
\frac{c_g}{\lambda_k},
\qquad k\ge1.
\end{equation}
Since the projection $\Pi_\Lambda$ is constant as $\Lambda$ varies between two
consecutive eigenvalues, \eqref{Hubei}  implies that
for every $\rho_1\in(0,\rho)$,
\begin{equation}\label{qu-tailn}
\|(I-\Pi_\Lambda)u(\cdot,T)\|_{L^\infty(\Omega)}
\le
C_{\rho_1}
e^{-\rho_1\sqrt{\Lambda}}
\|f\|_{G^{1,\rho}(\Omega)}
\|g\|_{L^1(0,T)}
\end{equation}
for all $\Lambda\ge1$, after increasing the constant if necessary.

Because $\rho>C_S$, fix once and for all
$C_S<\rho_1<\rho$.
Set
\begin{equation*}
\delta
:=
\sup_{x\in\omega}|u(x,T)|.
\end{equation*}
We first derive a quantitative estimate valid for every sufficiently
large spectral threshold $\Lambda$.
From \eqref{qbk-lowern},
\begin{align*}
\|\Pi_\Lambda f\|_{L^2(\Omega)}^2
=
\sum_{\lambda_k\le\Lambda}|f_k|^2
 \le
\frac{1}{c_g^2}
\sum_{\lambda_k\le\Lambda}
\lambda_k^2
|b_k(T)f_k|^2\le
\frac{\Lambda^2}{c_g^2}
\|\Pi_\Lambda u(\cdot,T)\|_{L^2(\Omega)}^2.
\end{align*}
Hence, by \Cref{specpro}, one finds
\begin{align*}
\|\Pi_\Lambda f\|_{L^2(\Omega)}
&\le
C\Lambda e^{C_S\sqrt{\Lambda}}
\sup_{x\in\omega}
|\Pi_\Lambda u(x,T)|.
\end{align*}
Moreover, \eqref{qu-tailn} shows
\begin{align*}
\sup_{x\in\omega}
|\Pi_\Lambda u(x,T)|
\le
\sup_{x\in\omega}|u(x,T)|
+
\|(I-\Pi_\Lambda)u(\cdot,T)\|_{L^\infty(\Omega)}
\nonumber\le
\delta
+
C_{\rho_1}
e^{-\rho_1\sqrt{\Lambda}}
\|f\|_{G^{1,\rho}(\Omega)}
\|g\|_{L^1(0,T)}.
\end{align*}
Using the a priori bound
$$
\|f\|_{G^{1,\rho}(\Omega)}\le M,
$$
and absorbing the fixed norm of $g$ into the constant, we conclude that
\begin{equation}\label{qlow-f-finaln}
\|\Pi_\Lambda f\|_{L^2(\Omega)}
\le
C\Lambda e^{C_S\sqrt{\Lambda}}\delta
+
CM\Lambda
e^{-(\rho_1-C_S)\sqrt{\Lambda}}.
\end{equation}
Note that
the spectral Gevrey norm prior directly controls the complementary spectral tail. In fact,
\begin{align}
\|(I-\Pi_\Lambda)f\|_{L^2(\Omega)}^2
=
\sum_{\lambda_k>\Lambda}|f_k|^2
\le
e^{-2\rho\sqrt{\Lambda}}
\sum_{\lambda_k>\Lambda}
e^{2\rho\sqrt{\lambda_k}}|f_k|^2
\nonumber\le
M^2e^{-2\rho\sqrt{\Lambda}}.
\label{qf-highn}
\end{align}
Therefore \eqref{qlow-f-finaln} and \ref{qf-highn} give
\begin{equation}\label{maetj}
\|f\|_{L^2(\Omega)}
\le
C\Lambda e^{C_S\sqrt{\Lambda}}\delta
+
CM\Lambda
e^{-(\rho_1-C_S)\sqrt{\Lambda}}
+
M e^{-\rho\sqrt{\Lambda}}.
\end{equation}

We now optimize \eqref{maetj}.  If $\delta=0$, then
letting $\Lambda\to\infty$ gives
$
\|f\|_{L^2(\Omega)}=0,
$
since $\rho_1>C_S$.  Hence the asserted estimate holds trivially.

Thus assume that $\delta>0$. By the definition of spectral Gevrey norm,
\begin{equation}\label{qL2-apriorin}
\|f\|_{L^2(\Omega)}\le M.
\end{equation}
Hence, if $\delta\ge M/2$, then for every $\alpha\in(0,1)$,
\begin{equation}\label{qlarge-deltan}
\|f\|_{L^2(\Omega)}
\le
M
\le
2^\alpha M^{1-\alpha}\delta^\alpha.
\end{equation}
It remains to consider
$0<\delta<M/2$. Set
\begin{equation}  \label{choicerr}
r:=\frac{\delta}{M}\in(0,1/2), \qquad
\sqrt{\Lambda}
=
\frac{1}{\rho_1}
\log\frac{1}{r}.
\end{equation}
For $r$ sufficiently small this gives $\Lambda\ge1$. Then the remaining
bounded range of $r$ also gives \eqref{qlarge-deltan} with $2^{\alpha}$ replaced by some larger constant.

Now we set
\begin{equation*}
\alpha_0
:=
1-\frac{C_S}{\rho_1}
=
\frac{\rho_1-C_S}{\rho_1}
>0.
\end{equation*}
By \eqref{choicerr}, we have
\begin{align}
e^{C_S\sqrt{\Lambda}}r=
r^{\alpha_0},
\qquad
e^{-(\rho_1-C_S)\sqrt{\Lambda}}
=
r^{\alpha_0},
\qquad
e^{-\rho\sqrt{\Lambda}}
=
r^{\rho/\rho_1}.
\label{qthird-balancen}
\end{align}
Since $\rho>\rho_1$, one has
$\frac{\rho}{\rho_1}>\alpha_0$.
Substituting  \eqref{qthird-balancen}
into \eqref{maetj} yields
\begin{equation}\label{qloglossn}
\|f\|_{L^2(\Omega)}
\le
CM
\left(
1+\left|\log r\right|^2
\right)
r^{\alpha_0}.
\end{equation}
Let
\begin{equation*}
\alpha
:=
\frac{\alpha_0}{2}
=
\frac{\rho_1-C_S}{2\rho_1}
>0.
\end{equation*}
Since
$$
\sup_{0<r\le1/2}
\left(
1+|\log r|^2
\right)
r^{\alpha_0-\alpha}
<\infty,
$$
 \eqref{qloglossn} thus implies
\begin{equation}\label{qHolder-relativen}
\|f\|_{L^2(\Omega)}
\le
C
M r^\alpha
=
C M^{1-\alpha}\delta^\alpha.
\end{equation}
Combining \eqref{qHolder-relativen} with \eqref{qlarge-deltan}, we obtain
\begin{equation*}
\|f\|_{L^2(\Omega)}
\le
C M^{1-\alpha}
\left(
\sup_{x\in\omega}|u(x,T)|
\right)^\alpha,
\end{equation*}
where the exponent $\alpha\in (0,1)$   depends only on the spectral inequality
constant $C_S$ and on the choice of $\rho_1\in(C_S,\rho)$, while
$C$ depends, in addition, on  $c_g$, $\|g\|_{L^1(0,T)}$.  Therefore,  Theorem \ref{ganal} is proved.
\end{proof}

\section{Further Application: Inverse Initial Data from Hausdorff Boundary Flux}
\label{sinverinitial}
The boundary Hausdorff spectral inequality and the resulting parabolic interpolation estimate are not restricted to inverse source problems. To demonstrate their broader applicability, we consider in this section the recovery of the initial state for the homogeneous heat equation from boundary flux observations on a Hausdorff-dimensional set. The argument follows directly from Proposition \ref{potiin} and the backward logarithmic estimate in Lemma \ref{lemlog}.

We consider
\begin{equation*}
\begin{cases}
\partial_ty+Ay=0,&0<t<T,\\
y(0)=y_0\in L^2(\Omega),
\end{cases}
\end{equation*}
where $A$ denotes the
Dirichlet Laplacian.

The following result gives the conditional stability for the inverse initial-value problem.
\begin{theorem}
\label{datainverse}
Let $0<t_1<t_*<t_2<T$, $\sigma>0$, and $M>0$.  Assume
\begin{equation}
y_0\in\Dom(A^\sigma),
\qquad
\norm{A^\sigma y_0}_2\le M.
\label{inipri}
\end{equation}
Set
\begin{equation*}
\delta_{\mathrm{ini}}
:=
\norm{\partial_\nu y}_{\YY_\Gamma((t_1,t_2))}.
\end{equation*}
Then
\begin{equation}
{}
\norm{y_0}_{L^2(\Omega)}
\le
CM
\left[
\log\left(e+\frac{M}{\delta_{\mathrm{ini}}}\right)
\right]^{-\sigma},
\label{instab}
\end{equation}
where $C$ depends only on the fixed geometry, the quantitative constants in
BHSI, $t_1,t_*,t_2$, and $\sigma$.
\end{theorem}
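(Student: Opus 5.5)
The plan is to chain Proposition~\ref{potiin} with Lemma~\ref{lemlog}, exactly as the section introduction suggests. Since the semigroup gives $y(t)=e^{-tA}y_0$, the observed quantity is $\delta_{\mathrm{ini}}=\norm{\partial_\nu e^{-tA}y_0}_{\YY_\Gamma((t_1,t_2))}$. This is precisely the right-hand side quantity of Proposition~\ref{potiin} with $h=y_0$. That proposition applies for any $h\in L^2(\Omega)$, and no trace admissibility at $t=0$ is needed because $t_1>0$. Applying it gives constants $C>0$ and $\beta\in(0,1)$, depending only on $t_1,t_*,t_2$, the geometry and the BHSI constants, such that
$$
\norm{e^{-t_*A}y_0}_{L^2(\Omega)}\le C\norm{y_0}_{L^2(\Omega)}^{1-\beta}\delta_{\mathrm{ini}}^{\beta}.
$$

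Next I replace $\norm{y_0}_2$ by the a priori bound. Since $A\ge\lambda_1>0$, the prior \eqref{inipri} yields $\norm{y_0}_2\le\lambda_1^{-\sigma}M$, and hence
$$
\eta:=\norm{e^{-t_*A}y_0}_2\le CM^{1-\beta}\delta_{\mathrm{ini}}^\beta=CM(\delta_{\mathrm{ini}}/M)^\beta.
$$
I then apply Lemma~\ref{lemlog} with $h=y_0$, which gives
$$
\norm{y_0}_2\le CM\left[\log\left(e+\frac{M}{\eta}\right)\right]^{-\sigma}.
$$
It remains to convert this into \eqref{instab}. From the bound on $\eta$ we have $M/\eta\ge c(M/\delta_{\mathrm{ini}})^\beta$. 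Therefore $\log(e+M/\eta)\ge c'\log(e+M/\delta_{\mathrm{ini}})$ whenever $M/\delta_{\mathrm{ini}}$ is large, since then the logarithm is at least $\beta\log(M/\delta_{\mathrm{ini}})-C$.

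The boundary cases are handled separately. When $M/\delta_{\mathrm{ini}}$ stays bounded, the right-hand side of \eqref{instab} is bounded below by a fixed multiple of $M$, and the conclusion follows from $\norm{y_0}_2\le\lambda_1^{-\sigma}M$ after enlarging $C$. When $\delta_{\mathrm{ini}}=0$, Proposition~\ref{potiin} gives $e^{-t_*A}y_0=0$, and injectivity of the semigroup gives $y_0=0$.

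The only point needing care is this final comparison of logarithms: the exponent $\beta$ and the constant $C$ must be absorbed into the constant in front, not into the exponent $\sigma$. This follows from $\log(e+cx^\beta)\ge c''\log(e+x)$ for $x\ge1$, with $c''$ depending only on $c,\beta$. Everything else is a direct citation of the two earlier results.
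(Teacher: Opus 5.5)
Your proposal is correct and follows the paper's proof essentially step for step. The paper chains Proposition~\ref{potiin} with $h=y_0$, the bound $\norm{y_0}_2\le\lambda_1^{-\sigma}M$, and Lemma~\ref{lemlog}, then compares the logarithms. Your multiplicative comparison $\log(e+cx^\beta)\ge c''\log(e+x)$ is just a slightly cleaner form of the paper's additive ``$c\log(\cdot)-C$'' adjustment.
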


\begin{proof}
By \Cref{potiin} with $h=y_0$,
\begin{equation*}
\norm{y(t_*)}_2
\le
C\norm{y_0}_2^{1-\beta}\delta_{\mathrm{ini}}^\beta.
\end{equation*}
Since $\lambda_1>0$ and \eqref{inipri} holds,
$$
\norm{y_0}_2\le\lambda_1^{-\sigma}M.
$$
Hence
\begin{equation}
\norm{y(t_*)}_2
\le
C M^{1-\beta}\delta_{\mathrm{ini}}^\beta.
\label{timM}
\end{equation}
Now we can apply \Cref{lemlog} to $h=y_0$.  If
$\delta_{\mathrm{ini}}\ge cM$, the desired estimate is trivial.  Otherwise,
\eqref{timM} implies
$$
\log\left(e+\frac{M}{\norm{y(t_*)}_2}\right)
\ge
c\log\left(e+\frac{M}{\delta_{\mathrm{ini}}}\right)-C,
$$
and \eqref{instab} follows after adjusting the constants.
\end{proof}

As a corollary of \Cref{datainverse}, we have the following result.
\begin{corollary}[$H_0^1$ initial data]
\label{cor:H1-initial}
If $y_0\in H_0^1(\Omega)$ and $\|\nabla y_0\|_2\le M$, then
$$
\|y_0\|_{L^2(\Omega)}
\le
CM
\left[
\log\left(e+\frac{M}{\delta_{\mathrm{ini}}}\right)
\right]^{-1/2}.
$$
\end{corollary}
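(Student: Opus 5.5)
The plan is to specialize \Cref{datainverse} to $\sigma=1/2$. The only work is to identify the $H_0^1$ prior with the spectral prior $\norm{A^{1/2}y_0}_2\le M$.

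First, recall that $\Dom(A^{1/2})=H_0^1(\Omega)$, the form domain of the Dirichlet Laplacian. For $y_0\in H_0^1(\Omega)$ we have
$$
\norm{A^{1/2}y_0}_2^2=\sum_k\lambda_k|(y_0,\phi_k)|^2=\langle \nabla y_0,\nabla y_0\rangle=\norm{\nabla y_0}_2^2 .
$$
This identity can be checked directly: it holds on finite spectral sums, and since $\{\phi_k/\sqrt{\lambda_k}\}$ is orthonormal in $H_0^1(\Omega)$ with the Dirichlet inner product, density extends it to all of $H_0^1(\Omega)$. Hence the hypothesis $\norm{\nabla y_0}_2\le M$ is exactly \eqref{inipri} with $\sigma=1/2$.

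Next, I apply \Cref{datainverse} with $\sigma=1/2$ and the same $t_1<t_*<t_2$ and $\Gamma$ satisfying Hypothesis~\ref{hyg}. This gives \eqref{instab} with exponent $-1/2$, which is the claimed bound. The constant depends only on the geometry, the BHSI constants and $t_1,t_*,t_2$, since $\sigma$ is now fixed.

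There is no genuine obstacle; the only point needing care is the form-domain identification above. Note also that no trace-admissibility condition is needed, unlike in \Cref{cor:H1-source}. The reason is that the data lie on $(t_1,t_2)$ with $t_1>0$, where \Cref{boho} already guarantees that $\partial_\nu y$ is continuous in $C(\Gamma)$.
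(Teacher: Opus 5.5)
Your proposal is correct and matches the paper's implicit argument. The corollary is \Cref{datainverse} with $\sigma=1/2$, via the identification $\Dom(A^{1/2})=H_0^1(\Omega)$ with $\|A^{1/2}y_0\|_2=\|\nabla y_0\|_2$ that the paper records just before \Cref{cor:H1-source} (your density step needs completeness of $\{\phi_k/\sqrt{\lambda_k}\}$ in $H_0^1(\Omega)$, though Bessel's inequality already gives the only direction used, $\sum_k\lambda_k|(y_0,\phi_k)|^2\le\|\nabla y_0\|_2^2$).
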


\begin{remark}
Li--Yamamoto--Zou \cite{LiYamamotoZou2009} prove logarithmic stability for
initial-temperature recovery from flux data on a relatively open boundary
piece.  \Cref{datainverse} replaces that open observation set by a
Hausdorff set which may have zero surface measure: $\dim_H\Gamma>d-2$ on a
flat patch, or $\dim_H\Gamma>d-1-c_{d+1}$ on a general $C^{1,1}$ boundary.
The observation norm must therefore be a trace norm such as
$L_t^2C_x(\Gamma)$ rather than $L^2(d\sigma\,dt)$.
\end{remark}

\section*{Acknowledgement}

The first author is  supported by the National Natural Science Foundation of China under grants 12422110 and 12371244 and by Ningbo Youth Leading Talent Project under grant 2024QL046. The second author thanks the National Natural Science Foundation of China (12271277), and Ningbo Youth Leading Talent Project (2024QL045). The third author is supported by National Natural Science Foundation of China (Project 12422117), Hong Kong Research Grants Council (15302323) and an internal grant of Hong Kong Polytechnic University (Project ID: P0053938, Work Programme: 4-ZZVA)

\bibliographystyle{abbrv}
\bibliography{refs}

\end{document}